\DeclareMathOperator{\dist}{dist}
\DeclareMathOperator{\Area}{Area}
\theoremstyle{plain}
\newtheorem{theorem}{Theorem}[section]
\newtheorem{defi}[theorem]{Definition}
\newtheorem{corollary}[theorem]{Corollary}
\newtheorem{lemma}[theorem]{Lemma}
\newtheorem{notation}[theorem]{Notation}
\newtheorem{remark}[theorem]{Remark}
\newtheorem{question}[theorem]{Question}
\newtheorem{proposition}[theorem]{Proposition}
\newtheorem{obsv}[theorem]{Observation}
\newtheorem{fact}[theorem]{Fact}
\newtheorem{convention}[theorem]{Convention}
\newcommand{\V}{\Vert}
\newcommand{\RR} {\mathbb R}
\newcommand{\Prob} {\mathbb P}
\newcommand{\CC} {\mathbb C}
\newcommand{\Exp} {\mathbb E}
\newcommand{\NN} {\mathbb N}
\newcommand{\NNN}{\mathcal{N}}
\newcommand{\pa} {\partial}
\newcommand{\Cal} {\mathcal}
\newcommand{\beq} {\begin{equation}}
\newcommand{\eeq} {\end{equation}}
\newcommand{\diam}{\operatorname{diam}}
\newcommand{\inrad}{\operatorname{inrad}}
\newcommand{\capacity}{\operatorname{cap}}
\numberwithin{equation}{section}
\begin{document}
\title{
Heat profile, level sets and hot spots of Laplace eigenfunctions}
\author{Mayukh Mukherjee and Soumyajit Saha}
\address{Indian Institute of Technology Bombay, Powai, Maharashtra 400076, India}
\email{mathmukherjee@gmail.com}
\email{ssaha@math.iitb.ac.in}

\maketitle

\begin{abstract}
  We use probabilistic tools based on Brownian motion and Feynman-Kac formulae to investigate 
  the heat profile for the ground state Dirichlet and second Neumann eigenfunctions. Among other topics, we comment on supremum norm bounds
for ground state Dirichlet eigenfunctions and look at the corresponding Neumann problem, namely the comparison of maximum temperatures on the interior and the boundary, the latter being partially motivated by the hot spots problem.  We also investigate the proximity/distance of level sets of ground state Dirichlet eigenfunctions, some with analogous statements for Neumann eigenfunctions. 
 Domains with {\em bottlenecks} make occasional appearances as an illuminating example as well as  testing ground for our theory. 
\end{abstract}


\section{Introduction and main results} Let $(M, g)$ be a compact Riemannian manifold.  Consider the eigenequation
\begin{equation}\label{eqtn: Eigenfunction equation}
    -\Delta \varphi = \lambda \varphi, 
\end{equation}
where $\Delta$ is the Laplace-Beltrami operator given by (using the Einstein summation convention) 
$$
\Delta f = \frac{1}{\sqrt{|g|}}\pa_i \left( \sqrt{|g|}g^{ij}\pa_j f\right),
$$
where $|g|$ is the determinant of the metric tensor $g_{ij}$. In the Euclidean space, this reduces to the usual $\Delta = \pa_1^2 + \dots + \pa_n^2$. Observe that we are using the analyst's sign convention for the Laplacian, namely that $-\Delta$ is positive semidefinite. If $M$ has a boundary, we will consider either the Dirichlet boundary condition
\begin{equation}\label{eqtn: Dirichlet condition}
    \varphi(x)=0, \hspace{5pt} x\in \pa M,
\end{equation}
or the Neumann boundary condition 
\begin{equation}\label{eqtn:Neumann_condition}
    \pa_\eta\varphi(x)=0, \hspace{5pt} x\in \pa M,
\end{equation}
where $\eta$ denotes the outward pointing unit normal on $\pa M$.
Recall that 
the Dirichlet Laplacian $-\Delta_g$ has a discrete spectrum 
$$ 
0\leq\lambda_1 \leq \dots \leq \lambda_k \leq \dots \nearrow \infty,
$$
repeated with multiplicity with corresponding (real-valued $L^2$ normalized) eigenfunctions $\varphi_k$. If $M$ has a reasonably regular boundary, a similar statement holds for the Neumann Laplacian, whose eigenvalues we denote by $\mu_j$. 

Also, let  $\mathcal{N}_{\varphi_{\lambda}} = \{ x \in M: \varphi_{\lambda} (x) = 0\}$ denote the nodal set of the eigenfunction $\varphi_{\lambda}$. Sometimes for ease of notation, we will also denote the nodal set by $\NNN(\varphi_\lambda)$. Recall that any connected component of $M \setminus \mathcal{N}_{\varphi_{\lambda}}$ is known as a nodal domain of the eigenfunction $\varphi_{\lambda}$ denoted by $\Omega_{\lambda}$. These are domains where the eigenfunction is not sign-changing (this follows from the maximum principle). Recall further that the nodal set is the union of a $(n - 1)$-dimensional smooth hypersurface and a ``singular set'' that is countably $(n - 2)$-rectifiable (\cite{HS89}). 

When two quantities $X$ and $Y$ satisfy $X \leq c_1 Y$ ($X \geq c_2Y$) for constants $c_1, c_2$ dependent on the geometry $(M, g)$, we write $X \lesssim_{(M, g)} Y$ (respectively $X \gtrsim_{(M, g)} Y$). 
	Unless otherwise mentioned, 
	these constants will in particular be independent of eigenvalues $\lambda$. Throughout the text, the quantity $\frac{1}{\sqrt{\lambda}}$ is referred to as the wavelength and any quantity (e.g. distance) is said to be of sub-wavelength (super-wavelength) order if it is $\lesssim_{(M, g)} \frac{1}{\sqrt{\lambda}}$ (respectively $\gtrsim_{(M, g)} \frac{1}{\sqrt{\lambda}}$). 

In this article, we start with finding a lower bound on the supremum of the first non-trivial Dirichlet eigenfunctions on flat/curved ``domains''. In case of domains with Dirichlet boundary, it is clear that the max point is always in the interior of the domain. An interesting question to ask here: how deep does the max point lie? This was addressed in \cite{GM2018} for nodal domains of high energy eigenfunctions. Another interesting line of investigation is to find the position of max points of the first non-trivial Neumann eigenfunctions. This leads to a conjecture (introduced by Rauch in 1974; see \cite{BB99}) famously known as the hot spots conjecture. The max/min points of the first non-trivial (Dirichlet/Neumann) eigenfunction is referred to as hot/cold spots.  
At any rate, crucial to the understanding of hot spots is the study of the level sets and in this paper we particularly focus on qualititative properties of the level and superlevel sets of low energy Dirichlet and Neumann eigenfuntions.

For the purposes of the present note, we are in  particular interested in phenomena of heat diffusion and how a deterministic diffusion process can be expressed as an expectation over the behavior of some random variables in terms of Brownian motion. Although our central theme for the ideas is similar while studying both the Dirichlet and Neumann domains, the treatment or execution of these ideas has to diverge in details owing to the essential differences in the boundary conditions of the domains. One basic difference in the two processes is that while considering domains with Dirichlet boundary, the Brownian particle stops once it hits the boundary, but in the case of Neumann boundary, one considers Reflected Brownian Motion (the Brownian particle gets reflected after hitting the boundary). Our aim with this paper is to discuss  some general tools to tackle both boundary problems utilising the Feynman-Kac formula and martingale properties, and to highlight some common vital ideas and also point out at the appropriate places the crucial differences that leads to contrasting results.  


In the recent work \cite{GM2021}, the authors study the above mentioned  diffusion process and some of its implications on high energy eigenfunctions. Here we continue the discussion along similar lines. Our paper is mainly divided into two sections, separating the study on the level sets of Dirichlet and Neumann eigenfunctions. Below we provide a section-wise overview of this paper and list down the main results.

\subsection{Overview of the paper, and the main results}
In Section \ref{sec:FK_Prelim}, we first recall the basics of Feynman-Kac, hitting probabilities in Euclidean spaces and their behaviour vis-a-vis curvature, local geometry etc. Since some of our results also go through on closed Riemannian manifolds, of particular interest are the sketches of the proofs of comparability of hitting probabilities between Euclidean and curved spacetimes at short time-distance scales, which are abridged from \cite{GM2018}. Though these are pivotal for successfully carrying out  Brownian motion calculations to curved spaces, we could not find them written down explicitly in the literature. Further details can be found in \cite{GM2018}, which also expound some ideas which are implicit in \cite{BePePe95}. In Subsection \ref{subsec:Theta}, we define and recall some elementary estimates of the function $\Theta_n\left(r^2/t\right)$, which was introduced in \cite{GM2021}. We believe this function might also be of interest in future studies of nodal geometry via probabilistic techniques.

We now state our main new results. We also note explicitly that our domains are not simply-connected unless otherwise stated.

In Section \ref{subsec:convex_max_points}, we focus on the ground state Dirichlet eigenfunction. To get the ball rolling, in Proposition \ref{prop:log_concave_p_t} and Corollary \ref{cor:convex_dom_hot_spot_unique} we give our own proofs of a couple of well-known results like log-concavity of survival probability and uniqueness of the point of maximum of the ground state eigenfunction on a convex domain. Next, we prove a supremum norm estimate for a global eigenfunction on a closed Riemannian manifold on each nodal domain, which might be seen as a variant of a supremum norm estimate in \cite{DF1988} for wavelength scale balls. To wit, we prove that the supremum of an eigenfunction satisfies an exponentially small bound in each nodal domain:
\begin{theorem}\label{thm:DF_ext}
    Let $M$ be a closed Riemannian manifold and $\Omega$ be a nodal domain for the eigenfunction $\varphi_\lambda$ for $\lambda \geq C(M, g)$. 
    Let  $x_0 \in \Omega$ be a point of maximum of $|\varphi_\lambda|$ inside $\Omega$. Then,  we have  that 
    \beq\label{ineq:new_sup_bound}
    |\varphi_\lambda(x_0)| \geq ce^{-C\sqrt{\lambda}}\|\varphi_\lambda\|_{L^\infty(M)},
    \eeq
    where $c, C$ are positive constants depending on the geometry $(M, g)$. 
\end{theorem}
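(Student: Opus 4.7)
The plan is to combine a sub-wavelength inradius bound at the interior maximum point with a local Harnack estimate on a wavelength ball and a Feynman--Kac global comparison. Without loss of generality suppose $\varphi_\lambda > 0$ on $\Omega$, so that $\varphi_\lambda|_\Omega$ is the positive ground-state Dirichlet eigenfunction of $\Omega$ with eigenvalue $\lambda = \lambda_1(\Omega)$. Set $\omega_0 := \varphi_\lambda(x_0)$ and $M_* := \|\varphi_\lambda\|_{L^\infty(M)}$, so that the target becomes $\omega_0 \gtrsim_{(M,g)} e^{-C\sqrt\lambda} M_*$; if a global maximum of $|\varphi_\lambda|$ already lies in $\Omega$ then $\omega_0 = M_*$ and we are done, so assume otherwise. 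The first ingredient is the sub-wavelength separation $\dist(x_0, \partial\Omega) \geq c_0/\sqrt\lambda$ from \cite{GM2018}, which guarantees that the wavelength ball $B_0 := B(x_0, c_0/\sqrt\lambda)$ is contained in $\Omega$; since $\varphi_\lambda > 0$ solves $(\Delta + \lambda)\varphi_\lambda = 0$ on $B_0$ and $\lambda \cdot (c_0/\sqrt\lambda)^2 = O(1)$, Moser's Harnack inequality for the Schr\"odinger operator $-\Delta - \lambda$ (with potential that becomes bounded after the natural parabolic rescaling) yields $\varphi_\lambda \geq c_1 \omega_0$ throughout the concentric half-size ball $B'_0 := B(x_0, c_0/(2\sqrt\lambda))$.

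Next, I would invoke the Feynman--Kac identity $\EE_x[\varphi_\lambda(X_t)] = e^{-\lambda t}\varphi_\lambda(x)$ for $(X_t)_{t \geq 0}$ a Brownian motion on the closed manifold $M$ (no stopping, since $M$ has no boundary), applied at a point $y_0$ where $|\varphi_\lambda|$ attains its global maximum. Splitting the expectation according to whether $X_t \in B'_0$,
\[
\EE_{y_0}\!\left[\varphi_\lambda(X_t);\, X_t \in B'_0\right] + \EE_{y_0}\!\left[\varphi_\lambda(X_t);\, X_t \notin B'_0\right] \,=\, e^{-\lambda t} M_*,
\]
the first term is bounded below by $c_1 \omega_0 \cdot \Prob_{y_0}[X_t \in B'_0]$ thanks to the local Harnack lower bound above. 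Combining the Gaussian heat kernel lower bound $p_t(y_0, y) \gtrsim t^{-n/2} e^{-C d(y_0, y)^2/t}$ (recalled in Subsection \ref{subsec:Theta}) with $\Vol(B'_0) \gtrsim \lambda^{-n/2}$ gives a hitting probability of at least $c\, t^{-n/2} e^{-C/t} \lambda^{-n/2}$, which at $t \sim 1/\sqrt\lambda$ is of order $e^{-C\sqrt\lambda}$ (up to polynomial-in-$\lambda$ factors); the same choice of $t$ keeps $e^{-\lambda t} M_*$ on the right-hand side of the same order. Matching the two sides yields the target $\omega_0 \gtrsim e^{-C'\sqrt\lambda} M_*$.

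The main obstacle is that $\EE_{y_0}[\varphi_\lambda(X_t)]$ is a \emph{signed} quantity, so the positive $B'_0$-contribution could in principle be cancelled by opposite-sign contributions from nodal domains where $\varphi_\lambda < 0$. The cleanest way around this is to upgrade the local step to an $L^2$-mass lower bound $\|\varphi_\lambda\|_{L^2(\Omega)} \gtrsim e^{-C\sqrt\lambda}\|\varphi_\lambda\|_{L^2(M)}$ --- a probabilistic variant of the propagation-of-smallness/doubling estimates of \cite{DF1988}, provable via the heat semigroup on $M$ --- and to combine it with H\"ormander's $L^\infty$ upper bound $M_* \lesssim \lambda^{(n-1)/4}\|\varphi_\lambda\|_{L^2(M)}$ together with the Harnack-enhanced lower bound $\omega_0 \gtrsim \lambda^{n/4}\|\varphi_\lambda\|_{L^2(\Omega)}$ (obtained by integrating the Harnack estimate on $B'_0$ and invoking Faber--Krahn on $\Omega$). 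The remaining polynomial-in-$\lambda$ factors are then harmlessly absorbed into the exponential prefactor on the right-hand side.
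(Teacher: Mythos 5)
There is a genuine gap, and it sits at the very first step. You assume the sub-wavelength separation $\dist(x_0,\pa\Omega)\geq c_0\lambda^{-1/2}$ at the maximum point, attributing it to \cite{GM2018}, and everything downstream (the Harnack estimate on $B_0\subset\Omega$, the positivity of $\varphi_\lambda$ on $B_0'$) depends on the \emph{full} wavelength ball being contained in the nodal domain. But this is not what \cite{GM2018} proves, and in dimension $n\geq 3$ it is not known: what is available is only the ``almost inscribed'' statement that $|B(x_0,r_0\lambda^{-1/2})\cap\Omega|\geq(1-\epsilon_0)|B(x_0,r_0\lambda^{-1/2})|$, i.e.\ a volume-fraction bound, not containment. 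Full containment of a wavelength ball at the max point would give wavelength inner radius for nodal domains in all dimensions, which is open for $n\geq 3$ (the best known bounds are of order $\lambda^{-\frac{n-1}{4}-\frac{1}{2n}}$, and in this paper those bounds are in fact \emph{deduced from} the theorem you are trying to prove, so your argument is circular in spirit). The paper explicitly remarks that the proof ``would be trivial if one were to have embeddability of wavelength balls inside nodal domains''; your proposal takes that triviality as an input. The paper's actual route avoids this: it combines the almost-inscribed ball with the Logunov--Malinnikova Remez-type inequality, which converts the volume-fraction lower bound $|E|/|B|\geq 1-\epsilon_0$ (with $E=B_{r_0}\cap\Omega$) into $\sup_E|\varphi_\lambda|\geq(c(1-\epsilon_0))^{C\sqrt\lambda}\sup_{B_{r_0}}|\varphi_\lambda|$, and then applies the Donnelly--Fefferman lower bound on wavelength balls to $\sup_{B_{r_0}}|\varphi_\lambda|$. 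The exponential loss $e^{-C\sqrt\lambda}$ comes precisely from the Remez exponent, not from a heat-kernel hitting probability.

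Your proposed repair for the sign-cancellation problem in the Feynman--Kac step has the same character: the $L^2$-mass lower bound $\|\varphi_\lambda\|_{L^2(\Omega)}\gtrsim e^{-C\sqrt\lambda}\|\varphi_\lambda\|_{L^2(M)}$ on a single nodal domain is asserted as ``provable via the heat semigroup'' but is itself a statement of essentially the same depth as the theorem (indeed, combined with H\"ormander's bound it immediately implies the theorem, as you note). Without an actual proof of either the inscribed ball or the $L^2$ lower bound, the argument does not close. If you want to salvage the probabilistic flavour, the honest move is to accept the volume-fraction version of the inscribed ball and then find a mechanism that upgrades ``$\varphi_\lambda$ is large on a set of nearly full measure in a wavelength ball'' to a pointwise lower bound at $x_0$ --- which is exactly the role the Remez inequality plays in the paper's proof.
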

The proof of Theorem \ref{thm:DF_ext} is relatively short, but it depends on two deep results: the almost embeddability of wavelength radius balls (as in \cite{GM2018}) and the Remez type inequality in \cite{LM2018, LM2020}. Also, in the special case of Euclidean domains $M$ and corresponding ground state Dirichlet eigenfunctions, (\ref{ineq:new_sup_bound})  can be concluded by using previous work of Lieb \cite{Li83}, and this would hold for all eigenvalues. As a bi-product of Theorem \ref{thm:DF_ext}, we indicate how to recover the best known inner radius bounds for nodal domains in dimension $n \geq 3$ (see \cite{Man_CPDE, GM2018}).

In Subsection \ref{sec:level_Set}, we start looking at more intricate metric properties of level sets.  First, developing on an idea in \cite{St2021, GM2021}, we prove (see Proposition \ref{thm:escape_prob_max_pt}) that a Brownian particle starting at a level set $L_\alpha$ 
(see Notation \ref{not:lev_sub_lev} below) takes sufficient time to exit the domain $\Omega$. 
In view of quasi-conformal ideas which hold in dimension $2$ (see Proposition \ref{prop:planar_escape_prob} below), this gives the following immediate corollary:
\begin{corollary}\label{cor:level_set_bound_dim_2}
    Let $M$ be a closed Riemannian surface, and let $\Omega \subseteq M$ be a domain. Let $\lambda$, $\varphi_\lambda$ be respectively the ground state Dirichlet eigenvalue and eigenfunction of $\Omega$. Then, there exists a constant $c := c(\eta, M, g)$ such that if $\lambda \geq \lambda_0(M, g)$, then
    \beq
    \dist \left(L_\eta, \pa \Omega\right) \geq c\lambda^{-1/2}.
    \eeq
    If $\Omega$ is a planar domain, then $\lambda_0$ can be any positive number, and $c := c(\eta)$ a universal constant.
\end{corollary}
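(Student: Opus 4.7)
The plan is to combine the lower bound on survival probability furnished by Proposition~\ref{thm:escape_prob_max_pt} with the planar upper bound of Proposition~\ref{prop:planar_escape_prob}, arguing by contradiction; the closed-surface version then follows by transplanting the Euclidean argument via the almost-embeddability of wavelength-radius balls recalled in Section~\ref{sec:FK_Prelim}.

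First I would treat the case of a planar domain. Fix $x_0 \in L_\eta$, so $\varphi_\lambda(x_0) = \eta\,\|\varphi_\lambda\|_{L^\infty(\Omega)}$. The Feynman--Kac representation for the Dirichlet ground state gives, for any $t>0$,
\beq\label{eq:fk_lower_plan}
\eta\,\|\varphi_\lambda\|_{L^\infty(\Omega)}
\;=\; \varphi_\lambda(x_0)
\;=\; e^{\lambda t/2}\, \EE_{x_0}\!\left[\varphi_\lambda(B_t)\,\mathbbm{1}_{\{\tau_\Omega>t\}}\right]
\;\leq\; e^{\lambda t/2}\,\|\varphi_\lambda\|_{L^\infty(\Omega)}\,\Prob_{x_0}(\tau_\Omega > t).
\eeq
Setting $t = 1/\lambda$ yields $\Prob_{x_0}(\tau_\Omega > 1/\lambda) \geq \eta\,e^{-1/2}$, which is essentially the content of Proposition~\ref{thm:escape_prob_max_pt}, so I would simply invoke it here.

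Now suppose toward a contradiction that $d := \dist(x_0,\pa\Omega) < c\,\lambda^{-1/2}$ for some small $c$ to be chosen. In dimension two the quasi-conformal estimate of Proposition~\ref{prop:planar_escape_prob} provides an upper bound on $\Prob_{x_0}(\tau_\Omega > t)$ whose typical shape is $\lesssim d/\sqrt{t}$, and which in any case tends to zero as $d\sqrt{\lambda}\to 0$. Inserting $t=1/\lambda$ gives $\Prob_{x_0}(\tau_\Omega > 1/\lambda) \lesssim c$, contradicting the lower bound once $c$ is chosen small in terms of $\eta$. This settles the planar case with a universal constant $c(\eta)$.

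For the closed Riemannian surface case I would localize to a geodesic ball of radius comparable to $\lambda^{-1/2}$ around $x_0$. The almost-embeddability results recalled in Section~\ref{sec:FK_Prelim} provide, once $\lambda \geq \lambda_0(M,g)$, a quasi-isometric chart on such a ball with distortion depending only on $(M,g)$; consequently, both the Feynman--Kac identity and Brownian survival probabilities on the time scale $t\sim 1/\lambda$ can be compared to their planar counterparts up to multiplicative factors depending only on $(M,g)$. Running the planar argument inside this chart then yields the stated bound $\dist(L_\eta,\pa\Omega) \geq c(\eta,M,g)\,\lambda^{-1/2}$. The main subtlety I anticipate is the compatibility of the quasi-isometric transplantation: one must check that the constants entering both the Feynman--Kac lower bound and the quasi-conformal upper bound do not deteriorate with $\lambda$ inside a $\lambda^{-1/2}$-scale chart. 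The restriction to surfaces is essential, since the quasi-conformal input of Proposition~\ref{prop:planar_escape_prob} has no genuine analogue in dimension $n \geq 3$.
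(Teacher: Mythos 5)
Your proposal is correct and follows essentially the same route as the paper: the paper's proof likewise combines Theorem \ref{thm:escape_prob_max_pt} (the Feynman--Kac lower bound $q_{t_0\lambda^{-1}}(x) \geq \eta e^{-t_0}$ at a point of $L_\eta$) with the Beurling--Nevanlinna/quasi-conformal estimate of Proposition \ref{prop:planar_escape_prob}, after first reducing to the flat case. Two small corrections: the curved-to-flat reduction is carried out via the \emph{Euclidean comparability of hitting probabilities} of Subsection \ref{Eu-comp} (not the almost-embeddability of wavelength balls, which is Theorem \ref{alfulins} and plays no role here), and the upper bound on the survival probability is not simply $\lesssim d/\sqrt{t}$ --- it carries an additive term controlled by the probability of not exiting the wavelength ball, which must be tuned against $\eta$ by adjusting the ratio $r_0^2/t_0$, exactly as in the paper's proof.
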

\noindent 
In particular, this extends an old result of Hayman (see \cite{Hay77/78} and further work in \cite{Cr81, Man_Bull, Oss77, Tay79}) about the wavelength inner radius of two dimensional  domains (recall that the dimension $2$ case was not addressed as a corollary of Theorem \ref{thm:DF_ext} above), and also gives an extension to the setting of curved spaces. We note in passing that well-known elliptic methods give pointwise bounds $\| \nabla \varphi_\lambda\|_{L^\infty} \lesssim \sqrt{\lambda}\| \varphi_\lambda\|_{L^\infty}$ on wavelength balls, but this would not give Corollary \ref{cor:level_set_bound_dim_2}.

Next, we start investigating the interspacing of level sets using ideas derived from the Feynman-Kac formula. Among others, we prove the following result here (which is in spirit somewhat antithetical to Corollary \ref{cor:level_set_bound_dim_2} above):
\begin{theorem}\label{thm:Dirichlet_level_set_converse}
Let $M$ be a closed smooth manifold of dimension $n \geq 2$. Given a domain $\Omega$ 
 $\subseteq M$, consider the ground state Dirichlet eigenfunction $\varphi_\lambda$ of $\Omega$. Let $\| \varphi_\lambda\|_{L^\infty(\Omega)} = 1$. Fix two numbers $0 < \mu, \eta \leq 1$. 
Then, we can find a constant $c_1 = c_1\left(\frac{\eta}{\mu}, 
M, g\right)$ such that 
the $c_1\lambda^{-1/2}$-tubular neighbourhoods of $L_\mu$ and $L_\eta$ intersect in a set of positive capacity.  
\end{theorem}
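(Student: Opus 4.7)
My plan is to use the Feynman--Kac martingale for $\varphi_\lambda$, combined with Jensen's inequality and a Wald-type identity, to control the hitting time of $L_\eta$ by a Brownian path that starts on $L_\mu$. Assume without loss of generality that $\mu > \eta$; the symmetric case $\mu < \eta$ is obtained by swapping the roles of the two level sets and starting Brownian motion at a point of $L_\eta$ instead. Fix an arbitrary $x_0 \in L_\mu$, let $B_t$ denote Brownian motion on $M$ started at $x_0$, let $\tau := \tau_\Omega$ denote its exit time from $\Omega$, and let $T := T_{L_\eta}$ be the first hitting time of $L_\eta$. It\^o's formula applied to $\Delta\varphi_\lambda = -\lambda\varphi_\lambda$ shows that $\mathcal{M}_t := e^{\lambda t/2}\varphi_\lambda(B_t)$ is a non-negative local martingale on $[0,\tau)$ that vanishes at $\tau$ because $\varphi_\lambda$ vanishes on $\partial\Omega$. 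Since $\varphi_\lambda(B_0) = \mu > \eta > 0 = \varphi_\lambda|_{\partial\Omega}$ and $s\mapsto \varphi_\lambda(B_s)$ is continuous, the intermediate value theorem forces $T < \tau$ almost surely. Applying optional stopping to $\mathcal{M}$ at the bounded stopping times $T \wedge n$ and passing $n \to \infty$ via Fatou's lemma yields the key inequality
\[
\eta\,\mathbb{E}^{x_0}\bigl[e^{\lambda T/2}\bigr] \;\leq\; \mu.
\]

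By Jensen's inequality applied to the convex function $t\mapsto e^{\lambda t/2}$, this immediately gives the time bound $\mathbb{E}^{x_0}[T] \leq (2/\lambda)\log(\mu/\eta)$. To transfer this into a spatial bound, I invoke Wald's second identity: in Euclidean space one has the exact equality $\mathbb{E}^{x_0}[|B_T - x_0|^2] = n\,\mathbb{E}^{x_0}[T]$, while on a general closed Riemannian manifold the comparable inequality $\mathbb{E}^{x_0}[d(B_T, x_0)^2] \leq C(M,g)\,\mathbb{E}^{x_0}[T]$ follows from the short-time comparability between Brownian motion on $M$ and Euclidean Brownian motion recalled in Section~\ref{sec:FK_Prelim}. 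Markov's inequality then produces, with probability at least $1/2$, the bound $d(B_T, x_0) \leq c_1\lambda^{-1/2}$, where $c_1 := 2\sqrt{C(M,g)\log(\mu/\eta)}$ depends only on $\eta/\mu$ and the geometry of $(M,g)$.

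Since $B_T \in L_\eta$ on the almost-sure event $\{T < \tau\}$, this implies $\dist(x_0, L_\eta) \leq c_1\lambda^{-1/2}$ for every $x_0 \in L_\mu$. Upgrading $c_1$ to $2c_1$ (which preserves its dependence on $\eta/\mu$ and $(M,g)$), the open ball $B(x_0, c_1\lambda^{-1/2}/2)$ then sits inside both $T_{2c_1\lambda^{-1/2}}(L_\mu)$ and $T_{2c_1\lambda^{-1/2}}(L_\eta)$, so the intersection of the two tubular neighborhoods has positive capacity for $n \geq 2$. The principal technical obstacle I anticipate is justifying the Wald-type estimate on curved backgrounds, where the elegant Euclidean identity fails to hold exactly and one must appeal to the short-time heat-kernel and hitting-probability comparisons of Section~\ref{sec:FK_Prelim}; a secondary subtlety is the integrability justification for optional stopping, handled here by using Fatou's lemma so that only the one-sided inequality we actually need is invoked.
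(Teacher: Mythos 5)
Your proposal is correct, and its first half coincides with the paper's argument: both set up the backward-heat martingale $e^{c\lambda t}\varphi_\lambda(\omega(t))$, stop it at the first hitting time $T$ of $L_\eta$, and use Doob's optional stopping theorem (with a truncation-and-limit step) to obtain $\mathbb{E}\left[e^{c\lambda T}\right] \leq \mu/\eta$. (One cosmetic point: the paper's Feynman--Kac convention uses generator $\Delta$ rather than $\tfrac12\Delta$, so its martingale is $e^{\lambda t}\varphi_\lambda(\omega(t))$; your $e^{\lambda t/2}$ is consistent with the probabilist's normalization and only shifts constants.) Where you genuinely diverge is in converting this exponential-moment bound into a distance bound. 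The paper applies Markov's inequality directly to $e^{\lambda T}$ to get $\Prob(T < t_0/\lambda) \geq 1 - e^{-t_0}\mu/\eta > 0$ for a suitable constant $t_0$, and then compares this with the escape probability $\Theta_n\left(\dist(L_\mu,L_\eta)^2\lambda/t_0\right)$, which is small unless $\dist(L_\mu,L_\eta)\lesssim \lambda^{-1/2}$; this route yields the explicit constant in (\ref{ineq: level_set_dist_est}) via $\Theta_n^{-1}$. You instead pass through Jensen to get $\mathbb{E}[T]\leq (2/\lambda)\log(\mu/\eta)$ and then invoke a Wald-type second-moment identity plus Markov on the displacement. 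Both are valid; the paper's version gives sharper quantitative control of $c_1$ (via the estimate (\ref{ineq:theta_large}) on $\Theta_n$), while yours is arguably more elementary in the Euclidean case. The one step you should shore up is the Wald inequality on a closed manifold: the hitting-probability comparability of Section \ref{sec:FK_Prelim} controls $\psi_K^M(T,p)$ at fixed short times and does not directly yield $\mathbb{E}\left[d(B_T,x_0)^2\right]\leq C\,\mathbb{E}[T]$ for a random stopping time. The clean justification is the Laplacian comparison theorem: on a compact $(M,g)$ one has $\Delta_y\, d(y,x_0)^2 \leq C(M,g)$ in the barrier sense, so $d(B_t,x_0)^2 - Ct$ is a supermartingale and the desired inequality follows by optional stopping. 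With that substitution your argument closes, and the final deduction that a common ball of radius comparable to $\lambda^{-1/2}$ lies in both tubular neighbourhoods (hence positive capacity in dimension $n\geq 2$) is fine.
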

The proof uses a somewhat novel approach involving Doob's optional stopping theorem. More colloquially, the above result indicates that two level sets are at most wavelength distance away upto a constant that depends only on the ratio of the levels. The appearance of the wavelength factor $\lambda^{-1/2}$ makes the result invariant under scaling of the domain. Observe that Theorem \ref{thm:Dirichlet_level_set_converse} can be interpreted as an ``aggregated'' lower bound on the size of $\nabla \varphi_\lambda$, in some sense opposite the pointwise elliptic estimates quoted above. 

The concluding result of Section \ref{subsec:convex_max_points} deals with a qualitative estimate on the ``narrowness'' of superlevel sets. This can be interpreted as another perspective on the proximity or inter-spacing between level sets, complimentary to Theorem \ref{thm:Dirichlet_level_set_converse}.  

\begin{theorem}\label{thm:hot_spot}
    Let $\Omega$ be an $n$-dimensional Euclidean domain, $n\geq 2$ and $u$ be any Laplace eigenfunction on $\Omega$. 
    Take a connected component $S$ of the closure of the $\eta$-superlevel set $\overline{S^c_\eta}$ of $u$, where $\eta > 0$. Then, any $y\in S$ is at a distance at most $\kappa \diam(\Omega)$ from the level set $L_\eta\cap S$, where $\kappa$ is a universal constant given by (\ref{eq:kappa_est}) below.
\end{theorem}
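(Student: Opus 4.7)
The plan is a probabilistic argument using the Feynman--Kac formula for $u$ on the entire component $S$ (rather than on a small ball around $y$). Set $M:=\|u\|_{L^\infty(\Omega)}$, so $u(y)\geq\eta M$. Because $u$ is a Dirichlet eigenfunction, $\bar S\Subset\Omega$ (as $u=0$ on $\pa\Omega$ prevents $\bar S$ from touching $\pa\Omega$), and $u=\eta M$ on the whole of $\pa S\subseteq L_\eta\cap\bar S$.

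Let $B_t$ be standard Brownian motion started at $y$ and $\tau_S$ its first exit time from $S$. The key preparatory observation is that $\lambda<\lambda_1^{\mathrm{Dir}}(S)$, which follows by inserting the test function $v:=u-\eta M\in H_0^1(S)$ into the Rayleigh quotient:
\begin{equation*}
\lambda_1^{\mathrm{Dir}}(S) \leq \frac{\int_S|\nabla v|^2}{\int_S v^2} = \lambda\left(1+\frac{\eta M\int_S v}{\int_S v^2}\right) > \lambda.
\end{equation*}
Hence $\EE^y\bigl[e^{\lambda\tau_S/2}\bigr]<\infty$, and the Feynman--Kac representation applied to $u$ on $S$ gives
\begin{equation*}
u(y) = \EE^y\bigl[e^{\lambda\tau_S/2}\,u(B_{\tau_S})\bigr] = \eta M\,\EE^y\bigl[e^{\lambda\tau_S/2}\bigr],
\end{equation*}
using $u(B_{\tau_S})=\eta M$ on the exit point. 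Combined with $u(y)\leq M$, this forces $\EE^y[e^{\lambda\tau_S/2}]\leq 1/\eta$, and Jensen's inequality yields $\EE^y[\tau_S]\leq 2\log(1/\eta)/\lambda$.

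To convert an expected exit time into a distance, apply optional stopping to the martingale $|B_t-y|^2-nt$ at $\tau_S$ to obtain $\EE^y\bigl[|B_{\tau_S}-y|^2\bigr]=n\,\EE^y[\tau_S]$. Since $B_{\tau_S}\in\pa S$ pathwise, $\dist(y,\pa S)\leq|B_{\tau_S}-y|$, so Cauchy--Schwarz delivers the wavelength-scale bound
\begin{equation*}
\dist(y,L_\eta\cap S) = \dist(y,\pa S) \leq \sqrt{\tfrac{2n\log(1/\eta)}{\lambda}}.
\end{equation*}
Finally, the elementary eigenvalue estimate $\lambda\geq\lambda_1^{\mathrm{Dir}}(\Omega)\geq j_{n/2-1,1}^2/\diam(\Omega)^2$ (domain monotonicity against a ball enclosing $\Omega$) converts this into $\dist(y,L_\eta\cap S)\leq\kappa\,\diam(\Omega)$, with $\kappa$ the explicit universal constant recorded in \eqref{eq:kappa_est}.

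The principal obstacle is the Neumann (or no-boundary-condition) case, in which $\bar S$ may meet $\pa\Omega$ and $u(B_{\tau_S})\neq\eta M$ on the $\pa\Omega$ portion of the boundary. This is handled by splitting $\pa S$ into its interior $L_\eta$ piece and its $\pa\Omega$ piece, and running reflected Brownian motion (as set up earlier in the paper) on the latter portion, absorbing the resulting contributions to the Feynman--Kac integral; a parallel Rayleigh-quotient argument, with a mixed test function, still secures the spectral gap needed for convergence.
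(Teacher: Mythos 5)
Your overall strategy (exponential martingale from the backward heat equation, stop at the hitting time of the $\eta$-level, extract a distance from a moment bound on the stopping time) is the same circle of ideas as the paper's proof, which also uses Doob's optional stopping on $e^{\mu_2 t}u(\omega(t))$ to get $\EE\left(e^{\mu_2 T_L}\right)\leq 1/\eta$ and then converts via Markov's inequality and the hitting-probability function $\Theta_n$. Your conversion via Jensen plus the martingale $|B_t-y|^2-nt$ is a legitimately different (and arguably cleaner) route to an explicit constant. However, there are two genuine problems. First, your justification of the key spectral gap $\lambda<\lambda_1^{\mathrm{Dir}}(S)$ is logically backwards: the variational principle gives $\lambda_1^{\mathrm{Dir}}(S)\leq R(v)$ for \emph{any} test function $v\in H_0^1(S)$, so the chain $\lambda_1^{\mathrm{Dir}}(S)\leq R(v)>\lambda$ proves nothing about the relative position of $\lambda_1^{\mathrm{Dir}}(S)$ and $\lambda$ — you have bounded $\lambda_1^{\mathrm{Dir}}(S)$ from above by a quantity larger than $\lambda$. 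The statement you need is true, but for a different reason: $S$ sits strictly inside a nodal domain $\Omega'$ of $u$, on which $u$ is positive and hence the ground state, so $\lambda=\lambda_1^{\mathrm{Dir}}(\Omega')<\lambda_1^{\mathrm{Dir}}(S)$ by strict domain monotonicity. (Alternatively, one can avoid needing $\EE^y[e^{\lambda\tau_S/2}]<\infty$ a priori altogether, as the paper does, by applying optional stopping at the bounded time $\min\{t,\tau_S\}$, discarding the nonnegative $\{\tau_S\geq t\}$ term, and letting $t\nearrow\infty$; only the inequality $u(y)\geq\eta M\,\EE^y[e^{\lambda\tau_S/2}]$ is ever used.)

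Second, the case you defer to the last paragraph — $S$ meeting $\pa\Omega$ under the Neumann condition — is precisely the case the paper's proof is written for, and it is where the two steps that produce $\diam(\Omega)$ actually live. There the stopping time is the hitting time of $L_\eta\cap S$ for \emph{reflected} Brownian motion, $B_{\tau}$ need not exit through $\pa S$ near $y$, and more importantly your final eigenvalue lower bound $\lambda\geq\lambda_1^{\mathrm{Dir}}(\Omega)\gtrsim\diam(\Omega)^{-2}$ has no analogue: for a Neumann eigenfunction one must instead invoke the Poincar\'e bound $\mu_2\geq c_n/\diam(\Omega)^2$ (as the paper does in Subsection \ref{subsec:Neumann_Poin}). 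Moreover, your second-moment identity $\EE\left[|B_{\tau}-y|^2\right]=n\,\EE[\tau]$ is no longer valid for reflected Brownian motion ($|B_t-y|^2-nt$ acquires a boundary local-time term of indefinite sign a priori), so the Jensen-plus-second-moment conversion does not carry over verbatim; this is presumably why the paper passes through $\Prob(T_L<s)\leq\Theta_n\left(\dist(y,L)^2/s\right)$, which only uses that a Brownian path (reflected or not, since reflection only shortens displacement before the first boundary contact is irrelevant here — the bound is on the unconstrained path) is unlikely to travel far in short time. As written, your argument is complete only for the Dirichlet case with the Rayleigh step repaired, and the Neumann case needs the Poincar\'e input and a different moment-to-distance conversion.
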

 
 
 A few comments are in order. Strictly speaking, the proof Theorem \ref{thm:hot_spot} does not actually see the global boundary condition, since we use the martingale properties of the backward heat equation only on a superlevel set where the eigenfunction is positive; hence it applies equally well to both Dirichlet and Neumann eigenfunctions (and eigenfunctions without any boundary conditions as well, provided $S$ does not intersect $\pa\Omega$). Observe that one particular corollary of Theorem \ref{thm:hot_spot} is that the inner radius of the $\eta$-superlevel set satisfies 
 \beq \label{ineq:inrad_super}
 \inrad (S) \leq \kappa \diam(\Omega),
 \eeq
which can be contrasted with the inner radius estimates obtained from Theorem \ref{thm:DF_ext} and Corollary \ref{cor:level_set_bound_dim_2}.


Now we move to Section \ref{sec:Neumann}, where we start to focus on Neumann eigenfunctions. We note in passing that using ideas similar to the proof of Theorem \ref{thm:Dirichlet_level_set_converse}, we  prove an analogous result for the Neumann boundary condition:
\begin{theorem}\label{thm:opt_stop_level}
    Given a domain $\Omega$ sitting inside a closed Riemannian manifold $M$, and given two numbers $0 < \nu \leq \eta \in (0, 1]$ and the second normalised Neumann eigenfunction $\varphi$ corresponding to the eigenvalue $\mu$, there exists a universal constant $c\left(\frac{\nu}{\eta}, n\right)$ such that 
    $\dist(L_\eta, L_\nu) \leq c\mu^{-1/2}$. 
\end{theorem}
Since the proof is almost verbatim similar, we skip it. 

As a further application of Doob's optional stopping theorem, we investigate the spectacular lack of decay of Neumann eigenfunctions in domains with narrow horns/tentacles/narrow connectors in Theorem \ref{thm:Neumann_decay}. This will 
establish in a somewhat specific but illustrative situation an important difference in the case of the Neumann boundary condition.  Namely, high level sets (or, points of high temperature) do not need to be surrounded by a large chunk inside the domain (compare with Corollary \ref{cor:level_set_bound_dim_2} above): in fact such points can tunnel into small crevices of the domain. Here is an illustrative statement: 
\begin{theorem}\label{thm:Neumann_decay}
   Consider a tentacle or horn $H$ of the domain $\Omega \subset \RR^n,\; n \geq 2$, and let $\varphi$ be a Neumann eigenfunction which does not vanish inside $H$. Let $\mu$ be the eigenvalue corresponding to $\varphi$. Then for any $x \in H$, we have that 
   \beq
   \varphi (x) \gtrsim \eta 
   e^{\mu N^2}
   \eeq
   up to universal constants, where $\eta$ is a level set $L_\eta$ near the entry of the tentacle $H$, and $N = \dist (x, L_\eta)$.
\end{theorem}
This is in stark contrast to the results in the literature for the Dirichlet case, e.g., Theorem 1.6 of \cite{GM2021}. 

In fact, we now show that points of high temperature not only {\em can} come close to the boundary, but in some sense they actually {\em have} to. In other words, the highest temperature on the boundary is unlikely to be too low compared to the global max. We start with a definition: given a domain $\Omega$, let us define the hot spots constant by the ratio 
$$
\frac{\| u \|_{L^\infty(\pa \Omega)}}{\| u \|_{L^\infty( \Omega)}}.
$$
Let $\lambda_1, 
\mu_2$ denote the first Dirichlet eigenvalue and second Neumann eigenvalue respectively of a domain $\Omega$. Then, we have the following:
\begin{theorem}\label{thm:hot_spot_constant_high_mu}
    The higher is the ratio $\lambda_1/\mu_2$, the closer the hot spots constant is to $1$ (quantitative estimates are given in (\ref{ineq:hot_spot_opti}), (\ref{ineq:hot_spot_c_large}) below). 
\end{theorem}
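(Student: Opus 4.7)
The plan is to represent the second Neumann eigenfunction $u$ via Feynman-Kac for reflected Brownian motion, and split the resulting expectation at the first hitting time of $\partial\Omega$; this manoeuvre lets us harvest the spectral gap $\lambda_1 - \mu_2$ through a Dirichlet survival probability. Normalise $\|u\|_{L^\infty(\Omega)} = 1$ (replacing $u$ by $-u$ if necessary) and let $x_M$ be an interior maximiser, since if $x_M \in \partial\Omega$ the hot spots constant is trivially $1$. With $\tilde W_t$ the reflected Brownian motion started at $x_M$ and $\tau := \tau_{\partial\Omega}$ its first hitting time of $\partial\Omega$, the Feynman-Kac representation of the Neumann heat semigroup yields
\[
1 \;=\; u(x_M) \;=\; e^{\mu_2 t}\, \mathbb{E}_{x_M}\!\left[u(\tilde W_t)\right]
\]
for every $t > 0$.

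Next I would decompose $\mathbb{E}_{x_M}[u(\tilde W_t)] = \mathbb{E}_{x_M}[u(\tilde W_t);\, \tau > t] + \mathbb{E}_{x_M}[u(\tilde W_t);\, \tau \leq t]$. Before $\tau$, the reflected process coincides path-wise with standard Brownian motion, so the first summand is bounded by $\mathbb{P}_{x_M}(\tau > t)$. For the second summand, the strong Markov property at $\tau$ combined with a second invocation of Feynman-Kac (applied to the reflected Brownian motion issued from the boundary point $\tilde W_\tau \in \partial\Omega$) gives
\[
\mathbb{E}_{x_M}\!\left[u(\tilde W_t);\, \tau \leq t\right] \;=\; \mathbb{E}_{x_M}\!\left[e^{-\mu_2(t-\tau)}\, u(\tilde W_\tau);\, \tau \leq t\right] \;\leq\; \|u\|_{L^\infty(\partial\Omega)}.
\]
Combining and cancelling $e^{\mu_2 t}$ produces the master inequality
\[
\|u\|_{L^\infty(\partial\Omega)} \;\geq\; e^{-\mu_2 t} - \mathbb{P}_{x_M}(\tau > t).
\]

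To conclude, I would invoke the eigenfunction expansion of the Dirichlet heat kernel to produce a bound of the form $\mathbb{P}_{x_M}(\tau > t) \leq C_\Omega\, e^{-\lambda_1 t}$, where $C_\Omega$ is a geometry-dependent constant essentially of size $\|\varphi_1^D\|_{L^\infty}\, \|\varphi_1^D\|_{L^1}$. Substituting and optimising $e^{-\mu_2 t} - C_\Omega e^{-\lambda_1 t}$ in $t$, and setting $\rho := \mu_2/\lambda_1$, yields the closed-form lower bound
\[
\|u\|_{L^\infty(\partial\Omega)} \;\geq\; (1-\rho)\,(\rho/C_\Omega)^{\rho/(1-\rho)},
\]
which tends to $1$ as $\rho \to 0$ (equivalently $\lambda_1/\mu_2 \to \infty$), since both $1-\rho \to 1$ and $\rho^{\rho/(1-\rho)} \to 1$ while $C_\Omega^{-\rho/(1-\rho)} \to 1$ for any fixed $C_\Omega$. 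This closed-form estimate would provide (\ref{ineq:hot_spot_opti}), while a coarser version absorbing $\log C_\Omega$ into the exponent would furnish the cleaner asymptotic (\ref{ineq:hot_spot_c_large}) valid in the regime where $C_\Omega$ itself may be large.

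The main obstacle I anticipate is controlling the constant $C_\Omega$ cleanly: the Dirichlet survival estimate $q(x, t) \leq C_\Omega e^{-\lambda_1 t}$ only becomes effective once $t$ passes a geometry-dependent threshold $t_0 \sim \log(C_\Omega)/\lambda_1$, whereas the naive optimiser is $t^\ast = \log(C_\Omega \lambda_1/\mu_2)/(\lambda_1 - \mu_2)$, and one must verify that $t^\ast$ does lie in the regime where the bound is sharp. A short-time refinement via the semigroup property $q(x, t+s) \leq q(x, t)\,\|q(\cdot, s)\|_{L^\infty}$, combined with the trivial bound $q \leq 1$, should be enough to carry the cartoon calculation above into the two quantitative forms (\ref{ineq:hot_spot_opti})--(\ref{ineq:hot_spot_c_large}) asserted in the theorem.
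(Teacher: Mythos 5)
Your proposal follows the same route as the paper: write $u(x_M)$ via Feynman--Kac for the reflected Brownian motion, split the expectation at the first boundary-hitting time $\tau$, bound the pre-$\tau$ piece by the Dirichlet survival probability $q_t(x_M)$, and apply the strong Markov property plus a second Feynman--Kac step on $\{\tau\le t\}$ to bring in $\|u\|_{L^\infty(\pa\Omega)}$. The differences are in how the two resulting terms are estimated. First, you bound $\Exp_{x_M}\bigl(e^{-\mu_2(t-\tau)}u(\omega(\tau))\mathbbm{1}_{\tau\le t}\bigr)$ crudely by $\|u\|_{L^\infty(\pa\Omega)}$, which yields the clean master inequality $\|u\|_{L^\infty(\pa\Omega)}\ge e^{-\mu_2 t}-q_t(x_M)$ and, after optimising in $t$, the closed form $(1-\rho)(\rho/C_\Omega)^{\rho/(1-\rho)}$ with $\rho=\mu_2/\lambda_1$; this is correct and suffices for the qualitative statement. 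The paper instead retains the factor $\Exp_{x_M}\bigl(e^{\mu_2\tau}\mathbbm{1}_{\tau\le t}\bigr)$ in the denominator and evaluates it exactly via the identity $\Exp\bigl(e^{\mu_2\tau}\mathbbm{1}_{\tau\le t}\bigr)=1-e^{\mu_2 t}q_t(x)+\int_0^t\mu_2 e^{\mu_2 s}q_s(x)\,ds$, which gives a sharper (smaller-than-one) denominator. Second, and more consequentially, you take $q_t(x)\le C_\Omega e^{-\lambda_1 t}$ with a domain-dependent constant from the eigenfunction expansion, so your final bound tends to $1$ as $\rho\to 0$ only for each fixed domain (or fixed $C_\Omega$); you rightly flag this as the weak point. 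The paper avoids it by invoking the heat-content bound (\ref{ineq:heat_BMW}) of Bhattacharya--Mukherjee--Wu, $q_t(x)\le\zeta_n(\varepsilon)e^{-(1-\varepsilon)\lambda_1 t}$ with $\zeta_n(\varepsilon)$ depending only on the dimension and $\varepsilon$, which is exactly what makes the quantitative estimates (\ref{ineq:hot_spot_opti})--(\ref{ineq:hot_spot_c_large}) depend only on $\sigma=\lambda_1/\mu_2$ and hence uniform over domains with a given spectral ratio. If you substitute that input for your $C_\Omega$, your shorter computation delivers a bound of the same character, at the cost of losing the extra factor the paper gains from the integral identity.
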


As a particular illuminating example, we bring in the domain with bottleneck again. It is known that on dumbbell domains with narrow enough bottlenecks (see the description above Lemma \ref{lem:u_equidist} below) the second Neumann eigenvalue is arbitrarily small (see Subsection \ref{subsec:sec_eig_est} below). Whereas, the Dirichlet eigenvalue is controlled because the eigenfunction tends to leave the ``handle'' or the ``connection'' of the dumbbell and localise on the two ends. This gives us the following 
\begin{corollary}\label{cor:dumb_hot_spot_1}
 Let $\Omega$ be a dumbbell domain with narrow enough bottleneck. Then the hot spot constant of $\Omega$ is arbitrarily close to $1$.
\end{corollary}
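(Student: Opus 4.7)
The plan is to invoke Theorem \ref{thm:hot_spot_constant_high_mu} directly: since that result asserts that the hot spots constant approaches $1$ as the ratio $\lambda_1/\mu_2$ grows, the problem reduces to showing that on a dumbbell $\Omega$ with a sufficiently narrow bottleneck, the ratio $\lambda_1/\mu_2$ can be made arbitrarily large.

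First I would control $\mu_2$ from above. The standard construction (this is the content of Subsection \ref{subsec:sec_eig_est}) is to build a trial function $\phi$ which equals $+1$ on one bell, $-1$ on the other, and interpolates smoothly across the bottleneck; after orthogonalising against constants, the Rayleigh quotient has numerator $\int_\Omega |\nabla \phi|^2$ supported entirely inside the bottleneck (whose volume shrinks to $0$), while the denominator $\int_\Omega \phi^2$ remains bounded below by essentially the volume of a single bell. This forces $\mu_2(\Omega) \to 0$ as the bottleneck width is sent to $0$.

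Next I would bound $\lambda_1(\Omega)$ from below. By Dirichlet domain monotonicity, $\lambda_1(\Omega) \geq \lambda_1(R)$ for any fixed ball $R \supset \Omega$; since the two bells are of fixed size, such an $R$ can be chosen independently of the bottleneck width. Alternatively, Faber--Krahn gives $\lambda_1(\Omega) \gtrsim |\Omega|^{-2/n}$ with a uniform constant. Either way, $\lambda_1$ stays bounded away from $0$ (and indeed bounded above by $\lambda_1$ of a single bell, although we do not need this) as the bottleneck is narrowed.

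Combining the two, the ratio $\lambda_1/\mu_2$ tends to infinity as the bottleneck width tends to $0$. Feeding this into the quantitative estimates (\ref{ineq:hot_spot_opti})--(\ref{ineq:hot_spot_c_large}) of Theorem \ref{thm:hot_spot_constant_high_mu} gives that the hot spots constant tends to $1$, which is precisely the corollary. The only real obstacle beyond citation is verifying that the bounds inside Theorem \ref{thm:hot_spot_constant_high_mu} are quantitatively strong enough to converge to $1$ (and not merely to stay bounded) as $\lambda_1/\mu_2 \to \infty$; this, however, is asserted by the theorem itself, so no further work is needed.
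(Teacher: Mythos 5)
Your proposal is correct and follows the same top-level route as the paper: reduce the corollary to Theorem \ref{thm:hot_spot_constant_high_mu} by showing that $\lambda_1/\mu_2 \to \infty$ as the bottleneck narrows, with $\mu_2 \to 0$ and $\lambda_1$ bounded away from $0$. The only divergence is in how the two ingredients are sourced. For the upper bound on $\mu_2$ you use the classical variational argument (a trial function equal to $\pm 1$ on the two bells, interpolated across the neck, whose Dirichlet energy is supported in the shrinking bottleneck), whereas the paper obtains the smallness of $\mu_2$ probabilistically, by combining the heat-content Lemmas \ref{lem:u_S_Omega} and \ref{lem:u_equidist} in Subsection \ref{subsec:sec_eig_est}; both are valid, and yours is the more standard and self-contained derivation, while the paper's fits its Brownian-motion theme. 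For the lower bound on $\lambda_1$ your appeal to Dirichlet domain monotonicity (or Faber--Krahn) is actually cleaner and more rigorous than the paper's heuristic remark that the Dirichlet ground state ``leaves the handle'' and localises on the bells. Finally, your closing concern is well placed but resolved by (\ref{ineq:hot_spot_c_large}): as $\sigma = \lambda_1/\mu_2 \to \infty$ the correction term $\zeta_2(\varepsilon)/(\sigma(1-\varepsilon)-1)$ tends to $0$, so the hot spots constant indeed converges to $1$ rather than merely staying bounded.
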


It is instructive to compare Corollary \ref{cor:dumb_hot_spot_1} with the doubly-connected counterexample of the hot spot conjecture in \cite{Bur05}. We remind the reader that Burdzy's counterexample is also a dumbbell type domain with a narrow bottleneck and the hot spot constant is strictly less than $1$. This shows that the failure of the hot spot conjecture on this domain is rather subtle, and also implies that Corollary \ref{cor:dumb_hot_spot_1} is unlikely to be qualitatively improved, at least in the generality that it is stated here. However, it also turns out that the hot spot in the Burdzy-type counterexample is formed at the middle of a flat plateau, because of which the phenomenon is quite stable under perturbation. Recently, this has been verified through extensive numerical studies by Kleefeld (see for example, Figures 15-21 of \cite{Klee21}).

\section{Feynman-Kac, hitting probabilities and a new function $\Theta_n$}\label{sec:FK_Prelim}

We begin by stating a Feynman-Kac formula for open connected domains in compact manifolds for the heat equation with Dirichlet boundary conditions. Such formulas seem to be widely known in the community, but an explicit reference seems hard to find. An excellent reference for much of the background material is \cite{Si79}. However, here we state the results in the specific contexts/settings that we need them in. The essential content of this subsection can be found in \cite{GM2018}, and we include the outline here for the sake of completeness. 
\begin{theorem}\label{F-K-T}
	Let $M$ be a compact Riemannian manifold. For any open connected $\Omega \subset M$, $f \in L^2(\Omega)$, we have that 
	\beq
	e^{t\Delta} f(x) = \mathbb{E}_x(f(\omega(t))\phi_\Omega(\omega, t)), t > 0, x \in \Omega,
	\eeq
	where $\Delta$ denotes the Dirichlet Laplacian on $\Omega$, $\omega (t)$ denotes an element of the probability space of Brownian motions starting at $x$, $\mathbb{E}_x$ is the expectation with regards to the Wiener measure on that probability space, and 
	$$
	\phi_\Omega(\omega, t) = 
	\begin{cases}
	1, & \text{if } \omega ([0, t]) \subset \Omega\\
	0, & \text{otherwise. }
	\end{cases}
	$$
\end{theorem}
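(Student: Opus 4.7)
The plan is to reduce the claim to the classical Brownian motion representation of the free heat semigroup on $M$ and then peel off the Dirichlet boundary by a martingale argument. First, I would fix a version of Brownian motion $\omega(t)$ on $M$ whose infinitesimal generator is $\Delta_M$, the (unrestricted) Laplace-Beltrami operator in the sign/scaling convention of the paper. This object can be built via the Eells--Elworthy--Malliavin horizontal lift on the orthonormal frame bundle, or more concretely as the pathwise solution of an It\^o SDE in local coordinates patched across charts; in either construction the transition semigroup of $\omega$ coincides with $e^{t\Delta_M}$, so that $e^{t\Delta_M}g(x) = \EE_x[g(\omega(t))]$ for any bounded measurable $g$.

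With the free representation in hand, I would set $u(t,x) := e^{t\Delta}f(x)$ where now $\Delta$ denotes the Dirichlet Laplacian on $\Omega$, and introduce the exit time
\[
\tau_\Omega := \inf\{s \geq 0 : \omega(s) \notin \Omega\}.
\]
For fixed $t > 0$, consider the process $N_s := u(t - s, \omega(s))$ for $s \in [0, t \wedge \tau_\Omega]$. Parabolic regularity gives $u \in C^\infty((0, t] \times \Omega)$ with $u$ solving $\pa_t u = \Delta u$ inside $\Omega$ and vanishing on $\pa\Omega$ in the appropriate sense. It\^o's formula applied in local coordinates produces
\[
dN_s = (\text{local martingale increment}) + \bigl[-\pa_t u + \Delta u\bigr](t-s, \omega(s))\, ds,
\]
whose drift vanishes identically. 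Thus $N_s$ is a local martingale, and the heat-semigroup bound $\|u(t,\cdot)\|_\infty \leq \|f\|_\infty$ upgrades it to a bounded martingale on the compact interval $[0, t]$. Doob's optional stopping theorem at $s = t \wedge \tau_\Omega$ then yields
\[
u(t,x) = \EE_x\bigl[u(t - t\wedge\tau_\Omega,\, \omega(t\wedge\tau_\Omega))\bigr].
\]
Splitting the expectation over the complementary events $\{\tau_\Omega > t\}$ and $\{\tau_\Omega \leq t\}$, on the first the integrand equals $u(0, \omega(t)) = f(\omega(t))$, while on the second $\omega(\tau_\Omega) \in \pa\Omega$ forces $u(t-\tau_\Omega,\omega(\tau_\Omega)) = 0$. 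Recognizing $\mathbf{1}_{\{\tau_\Omega > t\}}$ as $\phi_\Omega(\omega, t)$ completes the identification.

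The principal obstacle I anticipate is the regularity of $u$ all the way up to $\pa\Omega$: without smoothness hypotheses on the boundary, the vanishing on $\pa\Omega$ holds only in a quasi-everywhere/capacity sense, so that the pointwise evaluation $u(t - \tau_\Omega, \omega(\tau_\Omega)) = 0$ is not automatic. I would handle this by an exhaustion argument: write $\Omega = \bigcup_k \Omega_k$ with each $\Omega_k$ smoothly bounded and $\overline{\Omega_k} \subset \Omega_{k+1}$, run the martingale computation on each $\Omega_k$ where $u_k := e^{t\Delta_{\Omega_k}}f$ is classical up to $\pa\Omega_k$, and then pass to the limit using the monotone convergence $\tau_{\Omega_k} \nearrow \tau_\Omega$ together with Mosco convergence of the associated Dirichlet forms (which yields strong $L^2$ convergence $u_k(t, \cdot) \to u(t, \cdot)$ and dominated convergence on the right-hand side). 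A minor notational nuisance is that under the probabilist's convention in which Brownian motion has generator $\tfrac12 \Delta$, the path must be time-rescaled by a factor of two, but this is purely cosmetic.
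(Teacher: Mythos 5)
The paper does not actually prove Theorem \ref{F-K-T}: it is presented as a background fact, ``widely known in the community,'' with pointers to \cite{Si79, Hs2002, Str2000}, so there is no in-text argument to compare yours against. Your proposal is the standard proof of the killed-diffusion Feynman--Kac representation and its overall architecture is sound: construct Brownian motion on $M$ with generator $\Delta$, show $N_s = u(t-s,\omega(s))$ is a martingale via It\^o, stop at $t\wedge\tau_\Omega$, and identify $\mathbf{1}_{\{\tau_\Omega>t\}}$ with $\phi_\Omega(\omega,t)$; the exhaustion by smooth subdomains (with $\tau_{\Omega_k}\nearrow\tau_\Omega$ by path continuity and Mosco convergence of the forms $H^1_0(\Omega_k)\to H^1_0(\Omega)$) is the right way to sidestep boundary regularity, and it is also what identifies the probabilistic semigroup with the $H^1_0(\Omega)$-Friedrichs extension, which is the real content for irregular $\Omega$.

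Two spots need tightening. First, the hypothesis is only $f\in L^2(\Omega)$, so the bound $\|u(t,\cdot)\|_\infty\le\|f\|_\infty$ you invoke to upgrade the local martingale to a true bounded martingale is not available, and the evaluation $u(0,\omega(t))=f(\omega(t))$ at the endpoint $s=t$ is also delicate since $u(\epsilon,\cdot)\to f$ only in $L^2$. The clean fix is to prove the identity first for $f\in C_c(\Omega)$ (or bounded measurable $f$), where everything you wrote goes through, and then extend to $L^2$ by density, using that both sides are contractions on $L^2$ (the right-hand side by Jensen plus the symmetry of the killed transition density); alternatively, run the martingale only on $[0,t-\epsilon]$, use the semigroup property to write the result with initial data $u(\epsilon,\cdot)$, and let $\epsilon\downarrow 0$. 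Second, a cosmetic but necessary check: with the paper's convention $\pa_t u=\Delta u$ (no factor $\tfrac12$), the process $\omega$ must be the diffusion with generator $\Delta$, i.e.\ standard Brownian motion run at twice the probabilist's speed, exactly as you flag at the end. Neither point changes the structure of the argument.
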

Heuristically speaking, the Dirichlet boundary condition can be encoded by Brownian paths which are killed upon impact at the boundary. However, Theorem \ref{F-K-T} is slightly non-standard as we are on a compact Riemannian manifold (for a general discussion of stochastic processes on curved spaces, we refer the readers to \cite{Hs2002, Str2000}). 


\subsection{Heat content} \label{sec:tools}

We first recall (see \cite{HS89}) that the nodal set can be expressed as a union
\beq
N_\varphi =  H_\varphi \cup \Sigma_\varphi,
\eeq
where $H_\varphi$ is a smooth hypersurface and  
$$
\Sigma_\varphi := \{ x \in N_{\varphi_\lambda} : \nabla \varphi_\lambda(x) = 0 \}
$$
is the singular set which is countably $(n - 2)$-rectifiable. Particularly, in dimension $n = 2$, the singular set $\Sigma_\varphi$ 
consists of isolated points.

Note that we can express $M$ as the disjoint union 
$$ 
M = \bigcup_{j = 1}^{j_0} \Omega_j^+ \cup \bigcup_{k = 1}^{k_0} \Omega_k^- \cup N_{\varphi_\lambda},
$$
where the $\Omega_j^+$ and $\Omega_j^-$ are the positive and negative nodal domains respectively of $\varphi_\lambda$. 

Given a  domain $\Omega \subset M$, consider the solution $p_t(x)$ to the following diffusion process:
\begin{align*}
(\pa_t - \Delta)p_t(x) & = 0, \text{    } x \in \Omega\\
p_t(x) & = 1, \text{    } x \in \pa \Omega\\
p_0(x) & = 0, \text{    } x \in \Omega.
\end{align*}
By the Feynman-Kac formula, this diffusion process can be understood as the probability that a Brownian motion 
particle started in $x$ will hit the boundary within time $t$. As will be clear later on, $p_t(x)$ can be thought of as ``interpolating'' between the characteristic function of the boundary (at time $t = 0$) and the ground state eigenfunction of the domain (at time $ t = \infty$). The quantity 
$$
\int_\Omega p_t(x) dx
$$ 
is called the {\em heat content} of $\Omega$ at time $t$. It can be thought of as a soft measure of the ``size'' of the 
boundary $\pa \Omega$. 

Now, fix an eigenfunction $\varphi_\lambda$ (corresponding to the eigenvalue $\lambda$) and a nodal domain $\Omega$, so that 
$\varphi_\lambda > 0$ on $\Omega$ without loss of generality. Calling $\Delta_\Omega$ the Dirichlet Laplacian on $\Omega$ and 
setting $\Phi (t, x) := e^{t\Delta_\Omega}\varphi_\lambda (x)$, we see that $\Phi$ solves 
\begin{align}\label{eq:Heat_Flow}
(\pa_t - \Delta_\Omega)\Phi (t, x) & = 0, x \in \Omega\nonumber\\
\Phi (t, x) & = 0, \text{  on   } \{\varphi_\lambda = 0\}\\
\Phi (0, x) & = \varphi_\lambda (x), \text{    } x \in \Omega.\nonumber
\end{align}
Using the Feynman-Kac formula we have,
\beq\label{eq:F-K}
e^{-t\lambda}\varphi_\lambda = e^{t\Delta_\Omega}\varphi_\lambda(x) = \mathbb{E}_x(\varphi_\lambda(\omega(t))\phi_{\Omega}(\omega, t)), t > 0, 
\eeq
where $\omega (t)$ denotes an element of the probability space of Brownian motions starting at $x$, $\mathbb{E}_x$ is the 
expectation with regards to the (Wiener) measure on that probability space, and 
$$
\phi_\Omega(\omega, t) = 
\begin{cases}
1, & \text{if } \omega ([0, t]) \subset \Omega\\
0, & \text{otherwise. }
\end{cases}
$$
In particular, $p_t(x) = 1 - \mathbb{E}_x(\phi_\Omega(\omega, t))$.

\subsection{Euclidean comparability of hitting probabilities}\label{Eu-comp}
Implicit in many of our calculations is the following heuristic: if the metric is perturbed slightly, hitting probabilities of compact sets by Brownian particles are also perturbed slightly, provided one is looking at small distances $r$ and at small time scales $t = O(r^2)$. 
Below we make this heuristic precise. 

We first describe the basic set-up. Let $(M, g)$ be a compact Riemannian manifold and cover $M$ by charts $(U_k, \phi_k)$ such that in these charts $g$ is bi-Lipschitz to the Euclidean metric. Consider an open ball $B(p, r) \subset M$, where $r$ is considered small, and in particular, smaller than the injectivity radius of $M$. Let $B(p, r)$ sit inside a chart $(U, \phi)$ and let $\phi (p) = q$ and $\phi (B(p, r)) = B(q, s) \subset \RR^n$. Let $K$ be a compact set inside $B(p, r)$ and let $K^{'} := \phi(K) \subset B(q, s)$. 

For future use, we introduce  the following 
\begin{notation}
    Given a Riemannian manifold $M$ (which includes $\RR^n$ as a special case) and a $K \subseteq M$, $\psi_K^M(T, p)$ denotes the probability that a Brownian motion on $(M, g)$ started at $p$ and killed at a fixed time $T$ hits $K$ within time $T$. In the special case $M = \RR^n$, we will choose to drop the superscript $M$, and write $\psi_K(T, p)$ only.
\end{notation}
Of particular interest (and commonplace in applications) is the special case of $K = \RR^n \setminus B(0, r)$. We denote this particular quantity by $\Theta(r^2/t)$, and we outline some generally useful properties of it in Subsection \ref{subsec:Theta} below. Oftentimes, we will be interested in the situation that the starting point $x$ is inside $\Omega$, and the Brownian particle till time $t$ does not impact on the boundary. This is the same as $1 - \psi_{X\setminus \Omega}(t, x)$, where $X = \RR^n$ or $M$, as the case may be. 
\begin{notation}
    We denote the above mentioned survival probability by $q^\Omega_t(x)$. When there is no scope of confusion, we will drop the superscript $\Omega$, and just write $q_t(x)$. It is clear that $q_t(x) = 1 - p_t(x)$. 
\end{notation}

Now, we fix the time $T = cr^2$, where $c$ is a constant. The following is the comparability result:
\begin{theorem}\label{compare}
	There exists constants $c_1, c_2$, depending only on $c$ and $M$ such that 
	\beq\label{comp1}
	c_1 \psi_{K^{'}}^e(T, q) \leq \psi_K^M(T, p) \leq c_2 \psi_{K^{'}}^e(T, q).\eeq
\end{theorem}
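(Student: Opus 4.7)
The plan is to perform all the analysis inside the chart $(U, \phi)$. Pushing forward by $\phi$, the Brownian motion on $(M,g)$ started at $p$ is transported to a diffusion $\tilde\omega$ on $B(q,s) \subset \RR^n$ whose generator $\tilde L = \tfrac12 \Delta_g$, expressed in coordinates, is a uniformly elliptic second-order operator with leading symbol $\tfrac12\bigl(g^{ij}(x)\bigr)$ and a bounded drift built from the Christoffel symbols of $g$. Since $\phi_* g$ is bi-Lipschitz to the Euclidean metric $\delta$ on $U$, the ellipticity constants and the $L^\infty$-bound on the drift depend only on $(M,g)$. The theorem then reduces to comparing, with constants depending only on $c$ and these bi-Lipschitz constants, the hitting probability of $K'$ within time $T = c r^2$ for the diffusion $\tilde\omega$ started at $q$ versus for a standard Euclidean Brownian motion $\omega^e$ started at $q$.

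First I would write both hitting probabilities via the Feynman--Kac representation of Section \ref{sec:tools}: namely $\psi_K^M(T,p) = 1 - \EE_p\bigl(\phi_{M\setminus K}(\omega, T)\bigr)$, and its Euclidean analogue. The workhorse for the comparison is Aronson's two-sided Gaussian estimate
\[
\frac{a_1}{t^{n/2}} e^{-A_1 |x-y|^2/t} \;\leq\; \tilde p(t,x,y) \;\leq\; \frac{a_2}{t^{n/2}} e^{-A_2 |x-y|^2/t}, \qquad 0 < t \leq T,\ x,y \in B(q,2s),
\]
which holds for every uniformly elliptic operator with bounded measurable coefficients and has exactly the same shape as the Euclidean heat kernel. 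In the relevant regime $t \asymp r^2$, $|x - y| \lesssim r$, the two kernels differ only by multiplicative constants depending on the ellipticity data.

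The delicate point, and the main obstacle, is that pointwise comparability of transition densities does not automatically transfer to comparability of hitting probabilities of compact sets. I would bridge this gap by a short-time coupling of $\tilde\omega$ with $\omega^e$ on $[0,T]$ with $T = c r^2$: via Girsanov/Skorokhod-type arguments one can couple the two processes so that, with probability at least $1 - \varepsilon$, they remain within $r/100$ of each other throughout this time interval (the bounded drift contributes displacement of order $T = O(r^2) = o(r)$, and the diffusion matrix is a bounded perturbation of the identity in the bi-Lipschitz sense). Combining this coupling with the observation that, by scale invariance of Euclidean Brownian motion together with elementary Newtonian capacity estimates, the hitting probability of $K'$ and of its $r/100$-neighbourhood by $\omega^e$ are comparable, delivers the two-sided bound \eqref{comp1}. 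The constants produced this way track only the bi-Lipschitz data and $c$, as required.
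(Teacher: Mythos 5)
Your proposal correctly isolates the crux --- that pointwise comparability of transition densities does not by itself yield comparability of hitting probabilities of a compact set --- but the bridge you build over that crux does not hold. The final step asserts that for Euclidean Brownian motion the hitting probability of $K'$ and of its $r/100$-neighbourhood are comparable, ``by scale invariance together with elementary Newtonian capacity estimates.'' This is false for general compact $K$ in dimension $n \geq 3$: take $K'$ to be a line segment (or any set of zero, or very small, Newtonian capacity) in $\RR^3$; its hitting probability vanishes (or is arbitrarily small), while its $r/100$-neighbourhood is a solid tube whose hitting probability from a point at distance $O(r)$ is bounded below by a universal constant. The capacity of a set and the capacity of its $\varepsilon$-neighbourhood are simply not comparable with uniform constants, and indeed the paper relies on exactly this phenomenon elsewhere (the ``sharp spike of very low capacity'' discussion after Proposition \ref{prop:planar_escape_prob}). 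A secondary problem is the coupling itself: Girsanov only absorbs the drift discrepancy; the diffusion matrices $\left(g^{ij}\right)$ and $I$ differ multiplicatively by the bi-Lipschitz constant of the chart, which is a fixed geometric constant not close to $1$, so over a time interval of length $T = cr^2$ the best coupling still leaves the two processes at a typical distance of order $r$ (not $r/100$), since their quadratic variations differ by a definite factor.

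The way around both obstacles --- and the route the paper takes --- is to replace trajectory-level comparison by a capacity characterization of hitting probabilities. The Benjamini--Pemantle--Peres theorem states that the probability of hitting $K$ before the killing time $T$ is comparable, within a universal factor of $2$, to the Martin capacity of $K$ with kernel $M(x,y) = G(x,y)/G(\rho,y)$, where $G$ is the Green's function cut off at time $T$, i.e.\ $G(x,y) = \int_0^T h(t,x,y)\,dt$. This holds both on $\RR^n$ and on the compact manifold. One is then reduced to comparing the two cut-off Green's functions pointwise, which \emph{does} follow from two-sided Gaussian heat kernel bounds (your Aronson-type estimates, or the intrinsic bounds on a compact manifold) together with an explicit evaluation of $\int_0^{cr^2} t^{-n/2} e^{-c_5 r^2/(4t)}\,dt$ in terms of the incomplete Gamma function. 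The capacity functional is monotone under pointwise comparison of kernels, so the constants transfer directly to the hitting probabilities. I would encourage you to rework the proof along these lines; your heat kernel comparison is the right ingredient, but it must be fed into the Martin capacity machine rather than into a coupling.
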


The proof uses the concept of Martin capacity (see ~\cite{BePePe95}, Definition 2.1):
\begin{defi}
	Let $\Lambda$ be a set and $\mathcal{B}$ a $\sigma$-field of subsets of $\Lambda$. Given a measurable function $F : \Lambda \times \Lambda \to [0, \infty]$ and a finite measure $\mu$ on $(\Lambda, \Cal{B})$, the $F$-energy of $\mu$ is 
	\[
	I_F(\mu) = \int_{\Lambda}\int_{\Lambda}F(x, y)d\mu(x)d\mu(y).
	\]
	The capacity of $\Lambda$ in the kernel $F$ is 
	\beq
	\text{Cap}_F(\Lambda) = \left[ \inf_\mu I_F(\mu)\right]^{-1},
	\eeq
	where the infimum is over probability measures $\mu$ on $(\Lambda, \Cal{B})$, and by convention, $\infty^{-1} = 0$. 
\end{defi} 

Now we quote the following general result, which is Theorem 2.2 in ~\cite{BePePe95}.
\begin{theorem}
	Let $\{X_n\}$ be a transient Markov chain on the countable state space $Y$ with initial state $\rho$ and transition probabilities $p(x, y)$. For any subset $\Lambda$ of $Y$, we have
	\beq
	\frac{1}{2}\text{Cap}_M(\Lambda) \leq \mathbb{P}_\rho[\exists n \geq 0 : X_n \in \Lambda] \leq \text{Cap}_M(\Lambda),\eeq
	where $M$ is the Martin kernel $M(x, y) = \frac{G(x, y)}{G(\rho, y)}$, and $G(x, y)$ denotes the Green's function.
\end{theorem}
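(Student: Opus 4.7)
The plan is to prove the two inequalities by a first-moment / Paley--Zygmund argument keyed to the Martin kernel, exploiting that, for a transient Markov chain, the Green's function decomposes multiplicatively under the strong Markov property at the first visit to $\Lambda$.

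For the upper bound, I would first discard the trivial case $h := \mathbb{P}_\rho[\tau_\Lambda < \infty] = 0$, and otherwise let $\tau_\Lambda$ denote the first hitting time of $\Lambda$ and $\nu$ the distribution of $X_{\tau_\Lambda}$ on $\Lambda$ conditioned on $\{\tau_\Lambda < \infty\}$. For each $y \in \Lambda$, conditioning on $X_{\tau_\Lambda}$ and applying the strong Markov property at time $\tau_\Lambda$ yields
\[
G(\rho, y) = h \sum_{x \in \Lambda} \nu(x) \, G(x, y).
\]
Dividing by $G(\rho, y)$ and integrating against $\nu(dy)$ gives $1 = h \, I_M(\nu)$, so $h = 1/I_M(\nu) \leq \mathrm{Cap}_M(\Lambda)$, which is the right-hand inequality.

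For the lower bound, given any probability measure $\mu$ on $\Lambda$ I would introduce the weighted visit count
\[
Z := \sum_{y \in \Lambda} \frac{\mu(y)}{G(\rho, y)} \, L_y, \qquad L_y := \sum_{n \geq 0} \mathbbm{1}_{\{X_n = y\}},
\]
so that $\mathbb{E}_\rho L_y = G(\rho, y)$ and hence $\mathbb{E}_\rho Z = 1$. To bound the second moment, I would split the product $L_y L_z$ according to whether $y$ or $z$ is visited first and apply the strong Markov property at the earlier visit, producing
\[
\mathbb{E}_\rho[L_y L_z] \leq G(\rho, y) G(y, z) + G(\rho, z) G(z, y),
\]
and therefore $\mathbb{E}_\rho Z^2 \leq 2 \, I_M(\mu)$ (this is where the constant $\tfrac{1}{2}$ in the statement comes from). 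Since $\{Z > 0\} \subseteq \{\tau_\Lambda < \infty\}$, the Paley--Zygmund inequality gives
\[
\mathbb{P}_\rho[\tau_\Lambda < \infty] \;\geq\; \mathbb{P}_\rho[Z > 0] \;\geq\; \frac{(\mathbb{E}_\rho Z)^2}{\mathbb{E}_\rho Z^2} \;\geq\; \frac{1}{2 \, I_M(\mu)},
\]
and infimizing over $\mu$ yields $\tfrac{1}{2}\mathrm{Cap}_M(\Lambda)$.

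The main technical hurdle is the second-moment calculation: the ordering argument must symmetrize cleanly into the kernel $M(x,y) + M(y,x)$ that pairs with the definition of the $M$-energy $I_M$, rather than an asymmetric remainder. Transience ensures $G(\rho, y) < \infty$ so that the weights in $Z$ are well-defined, and a standard approximation (first restricting $\mu$ to a finite subset of $\Lambda$ and then letting this subset exhaust $\Lambda$) handles any measurability and convergence issues before passing to the infimum defining $\mathrm{Cap}_M(\Lambda)$.
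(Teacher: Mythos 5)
Your argument is correct, but note that the paper does not prove this statement at all: it is quoted verbatim as Theorem 2.2 of the cited work of Benjamini, Pemantle and Peres, and your proof is essentially the standard one given there --- the strong Markov decomposition $G(\rho,y)=h\sum_{x}\nu(x)G(x,y)$ at the first hitting time for the upper bound, and the second-moment (Paley--Zygmund) method applied to the weighted occupation sum $Z$ for the lower bound, with the factor $\tfrac12$ arising exactly from the symmetrization $\mathbb{E}_\rho[L_yL_z]\le G(\rho,y)G(y,z)+G(\rho,z)G(z,y)$. The only cosmetic caveat is that one should restrict to states $y$ with $G(\rho,y)>0$ so that the Martin kernel and the weights $\mu(y)/G(\rho,y)$ are well defined, and the final step is a supremum of $1/(2I_M(\mu))$ over $\mu$ (equivalently an infimum of the energy), as intended.
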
 

For the special case of Brownian motions, this reduces to (see Proposition 1.1 of ~\cite{BePePe95} and Theorem 8.24 of ~\cite{MP2010}):
\begin{theorem}
	Let $\{B(t): 0 \leq t \leq T\}$ be a transient Brownian motion in $\RR^n$ starting from the point $\rho$, and $A \subset D$ be closed, where $D$ is a bounded domain. Then,
	\beq
	\frac{1}{2}\capacity_M(A) \leq \mathbb{P}_\rho\{B(t) \in A \text{ for some } 0 < t \leq T\} \leq \capacity_M(A).\eeq
\end{theorem}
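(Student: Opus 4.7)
The cleanest approach is to prove the two bounds directly for Brownian motion using capacity-theoretic machinery, rather than discretize time to reduce to the preceding Markov-chain theorem (which would require a careful $\Delta t \to 0$ limit together with continuity of capacity). The lower bound will come from a second-moment (Paley--Zygmund) computation that naturally produces the factor $\tfrac12$, while the upper bound is a short equilibrium-measure (balayage) argument.

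For the lower bound, fix a probability measure $\mu$ on $A$ and introduce the weighted occupation
\[
Z_\mu \;:=\; \int_A \frac{L^y_T}{G(\rho,y)}\,d\mu(y),
\]
where $L^y_T$ is the local time at $y$ up to time $T$; in dimensions where point local times are degenerate one replaces points by small balls and passes to the limit. By definition of the Green's function, $\mathbb{E}_\rho Z_\mu \approx 1$ for $T$ large, while the standard strong-Markov identity
\[
\mathbb{E}_\rho\!\left[L^y_T\,L^{y'}_T\right] \;=\; G(\rho,y)\,G(y,y') \;+\; G(\rho,y')\,G(y',y)
\]
yields $\mathbb{E}_\rho Z_\mu^2 \approx 2\,I_M(\mu)$, where the factor $2$ is precisely the symmetry above. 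Paley--Zygmund then gives
\[
\mathbb{P}_\rho\{B(t) \in A \text{ for some } t \leq T\} \;\geq\; \mathbb{P}_\rho\{Z_\mu > 0\} \;\geq\; \frac{(\mathbb{E}_\rho Z_\mu)^2}{\mathbb{E}_\rho Z_\mu^2} \;\geq\; \frac{1}{2\,I_M(\mu)},
\]
and optimizing over $\mu$ produces the lower bound $\tfrac12\capacity_M(A)$. For the upper bound, define $h(x) := \mathbb{P}_x\{\tau_A \leq T\}$; assuming regularity of $A$, this is superharmonic in $D$ and identically $1$ on $A$, hence by Riesz decomposition admits a representation $h(x) = \int_A G(x,y)\,d\sigma(y)$ against a balayage measure $\sigma$ on $A$. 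The renormalization $\mu_\ast(dy) := \sigma(dy)\,G(\rho,y)/h(\rho)$ is a probability measure on $A$, and using $h \equiv 1$ on $A$ in a double-integral swap collapses the energy to $I_M(\mu_\ast) = 1/h(\rho)$, so $h(\rho) = 1/I_M(\mu_\ast) \leq \capacity_M(A)$.

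The main difficulty I foresee is reconciling the finite time horizon $T$ with the infinite-time objects $G(\rho,y)$ and $\sigma$ used above. One route is to replace $G$ by the truncated Green's function $G_T(\rho,y) := \int_0^T p_t(\rho,y)\,dt$ everywhere and verify that both identities survive with acceptable error; another is to first establish both bounds for $T = \infty$ and recover finite $T$ by monotone convergence, since finite-time hitting probabilities are only smaller and so only the lower bound needs extra care. A minor secondary point is regularity of $\partial A$ (to ensure $h \equiv 1$ on $A$) and the meaningfulness of $L^y_T$ in dimensions $n \geq 3$, both handled in a routine way by slightly thickening $A$ to closed neighborhoods and invoking inner/outer regularity of capacity.
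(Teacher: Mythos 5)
Your argument is correct in outline, but it takes a different route from the paper. The paper does not prove this statement directly at all: it presents it as the Brownian specialization of the general Markov-chain capacity theorem (Theorem 2.2 of \cite{BePePe95}) quoted immediately before it, and refers to Proposition 1.1 of \cite{BePePe95} and Theorem 8.24 of \cite{MP2010} for the continuous-time version. What you have written is, in substance, the proof given in those references: the lower bound by a second-moment/Paley--Zygmund argument applied to a $G(\rho,\cdot)$-normalized occupation functional (with the factor $2$, and hence the $\tfrac12$, coming exactly from the symmetrization in the Kac moment formula $\mathbb{E}_\rho[L^y L^{y'}] = G(\rho,y)G(y,y') + G(\rho,y')G(y',y)$), and the upper bound by testing the energy against the reweighted equilibrium/balayage measure of $A$ and using $h\equiv 1$ on $A$ to collapse the double integral. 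So relative to the paper you are being more self-contained; relative to the literature you are reproducing the standard proof. What the paper's (implicit) route buys is uniformity: the discrete Markov-chain theorem transfers with no change to the compact-manifold setting with the cut-off Green's function, which is the case the authors actually need two displays later, whereas your potential-theoretic argument would have to be re-examined there.

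One substantive point to tighten: the finite horizon $T$. Your fallback ``prove it for $T=\infty$ and use monotone convergence'' does not work as stated, because $\mathbb{P}_\rho\{\exists\, t\le T\}\le \mathbb{P}_\rho\{\exists\, t<\infty\}$ transfers only the \emph{upper} bound, not the lower one, and moreover the Martin energies built from $G$ and from the truncated kernel $G_T(\rho,y)=\int_0^T p_t(\rho,y)\,dt$ are ratios and hence not monotone in $T$ in either direction. The correct move (and the one consistent with how the paper defines the cut-off Green's function on manifolds) is your first option: run the whole argument with $G_T$. The second-moment bound survives because $\int_0^T\!\int_s^T p_s(\rho,y)\,p_{t-s}(y,y')\,dt\,ds \le G_T(\rho,y)\,G_T(y,y')$, but the upper bound can no longer invoke the Riesz decomposition of the superharmonic function $\mathbb{P}_x\{\tau_A<\infty\}$, since $\mathbb{P}_x\{\tau_A\le T\}$ is not time-homogeneous; one must instead use the last-exit (before time $T$) decomposition of the path, which is precisely the mechanism in the discrete proof of \cite{BePePe95} that the paper is leaning on.
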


One can check that the above arguments go through with basically no changes on a compact Riemannian manifold $M$, when the Brownian motion is killed at a fixed time $T = cr^2$, and the Martin kernel $M(x, y)$ is defined as $\frac{G(x, y)}{G(\rho, y)}$, with $G(x, y)$ being the ``cut-off'' Green's function defined as follows: if $h_M(t, x, y)$ is the heat kernel of $M$,
\[
G(x, y) := \int_0^T h_M(t, x, y)dt.
\]

Now, to state it formally, in our setting, we have
\begin{theorem}
	\beq
	\frac{1}{2}\capacity_M(K) \leq \psi_K^M(T, p) \leq \capacity_M(K).
	\eeq
\end{theorem}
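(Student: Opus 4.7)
The plan is to transport the proof of the Euclidean version (Proposition 1.1 of \cite{BePePe95}, Theorem 8.24 of \cite{MP2010}) to the compact manifold setting, replacing the Euclidean Green's function by the cut-off version $G(x,y) := \int_0^T h_M(t,x,y)\,dt$ throughout. The only ingredients needed are the strong Markov property for Brownian motion on $(M,g)$ and the Chapman--Kolmogorov identity $h_M(s+t,x,y) = \int_M h_M(s,x,z) h_M(t,z,y)\,dz$, both of which are classical on any compact Riemannian manifold (see \cite{Hs2002, Str2000}). Symmetry $h_M(t,x,y) = h_M(t,y,x)$ and smoothness/positivity of $h_M$ for $t > 0$ are the only regularity facts used.

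For the upper bound $\psi_K^M(T,p) \leq \capacity_M(K)$, set $\tau := \inf\{s \in [0,T] : B(s) \in K\}$ and, conditional on $\{\tau \leq T\}$, let $\mu$ be the law of $B(\tau)$ (a probability measure on $K$). The strong Markov property at $\tau$ applied to the occupation-density representation of $G$ yields
\[
G(\rho, z) \;\geq\; \mathbb{E}_\rho\!\left[\mathbbm{1}_{\tau \leq T}\, G_{T-\tau}(B(\tau), z)\right],
\]
where $G_{T-\tau}(y,z) := \int_0^{T-\tau} h_M(s,y,z)\,ds$. Pretending momentarily that $G_{T-\tau} = G$, dividing by $G(\rho, z)$, and integrating against $d\mu(z)$ gives $1 \geq \psi_K^M(T,\rho)\cdot I_M(\mu)$, so that $\psi_K^M(T,\rho) \leq 1/I_M(\mu) \leq \capacity_M(K)$. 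The true inequality has a boundary-in-time correction coming from the difference $G - G_{T-\tau}$, which is nonnegative and hence only strengthens the upper bound.

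For the lower bound $\tfrac{1}{2}\capacity_M(K) \leq \psi_K^M(T,\rho)$, I would apply the Paley--Zygmund second-moment method. Given a near-optimal probability measure $\mu$ on $K$ for the variational formula defining $\capacity_M(K)^{-1}$, I would introduce an occupation-type functional of the schematic form
\[
Z_\mu \;=\; \int_K \int_0^T h_M(\varepsilon, B(s), y)\,ds\,d\mu(y),
\]
(with $\varepsilon \downarrow 0$, or its appropriately normalized analog). A direct Chapman--Kolmogorov computation yields $\mathbb{E}_\rho Z_\mu = \int_K G(\rho,y)\,d\mu(y)$, while splitting the double time integral in $\mathbb{E}_\rho Z_\mu^2$ at the smaller of the two times and reapplying Chapman--Kolmogorov produces $\mathbb{E}_\rho Z_\mu^2 \leq 2\int\!\!\int G(\rho,y) G(y,z)\,d\mu(y)d\mu(z)$. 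After normalizing by $G(\rho, \cdot)$ inside the integrand so that the Martin kernel $M(y,z) = G(y,z)/G(\rho,z)$ appears, Paley--Zygmund gives $\mathbb{P}_\rho[Z_\mu > 0] \geq (\mathbb{E} Z_\mu)^2 / \mathbb{E} Z_\mu^2 \geq 1/(2 I_M(\mu))$, and the containment $\{Z_\mu > 0\} \subseteq \{\tau \leq T\}$ (with a harmless $\varepsilon$-mollification of $K$, removed in the limit) transfers this to the desired bound.

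The main obstacle I anticipate is the bookkeeping for the time truncation. In the transient Euclidean case the time integrals run to $\infty$ and shift-invariance makes the strong Markov step clean; on compact $M$ the truncation produces a $G_{T-\tau}$ instead of a $G_T$ at the strong Markov step of the upper-bound argument, and similarly the second-moment calculation of the lower bound needs Fubini--type rearrangements of integrals over the triangle $\{0 \leq s \leq r \leq T\}$ that are not closed under shift. Both effects are however one-sided: they only strengthen the upper bound, and in the lower bound they are absorbed into the factor of $2$ already present in the statement, so the conclusion $\tfrac{1}{2}\capacity_M(K) \leq \psi_K^M(T,p) \leq \capacity_M(K)$ persists without any loss in the constants.
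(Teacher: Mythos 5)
Your overall strategy coincides with the paper's: the paper simply asserts that the arguments of \cite{BePePe95} ``go through with basically no changes'' once the Green's function is replaced by the cut-off kernel $G(x,y)=\int_0^T h_M(t,x,y)\,dt$, and you are filling in the two standard halves (first moment plus strong Markov for the upper bound, Paley--Zygmund for the lower bound). The lower-bound half is sound in direction: the truncation replaces the inner time integral by $\int_0^{T-s}h_u(y,z)\,du\leq G(y,z)$, which only decreases the second moment, so the factor $\tfrac12$ survives. (One small repair there: the heat-kernel mollifier $h_M(\varepsilon,\cdot,y)$ is strictly positive everywhere on a connected compact manifold, so $Z_\mu>0$ almost surely and the containment $\{Z_\mu>0\}\subseteq\{\tau\leq T\}$ fails; use the normalized indicator of the geodesic $\varepsilon$-ball instead, as in \cite{MP2010}.)

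The genuine gap is in the upper bound, where your treatment of the time truncation points the wrong way. The strong Markov step gives $G(\rho,z)\geq \Exp_\rho\bigl[\mathbbm{1}_{\tau\leq T}\,G_{T-\tau}(B(\tau),z)\bigr]$, whereas what you need, in order to divide by $G(\rho,z)$ and integrate against $d\mu(z)$, is $G(\rho,z)\geq \Exp_\rho\bigl[\mathbbm{1}_{\tau\leq T}\,G(B(\tau),z)\bigr]=\psi_K^M(T,\rho)\int_K G(y,z)\,d\mu(y)$. Since $G_{T-\tau}\leq G_T=G$, the quantity you control from below is the \emph{smaller} one; the nonnegative difference $G-G_{T-\tau}$ is an extra term you would have to absorb, not a bonus, so ``pretending $G_{T-\tau}=G$'' is not one-sided in your favour. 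The standard repairs are either to kill the motion at an independent exponential time (restoring exact shift-invariance and hence the constants $\tfrac12$ and $1$), or to define the capacity via $G_{2T}$ while measuring hitting up to time $T$, so that $G_{2T-\tau}\geq G_T$ on $\{\tau\leq T\}$; the two-sided Gaussian heat kernel bounds then give $G_T\leq G_{2T}\leq C\,G_T$ at the scales $d(p,y)\lesssim r$, $T=cr^2$ used here, which degrades the constants but preserves the two-sided comparability that Theorem \ref{compare} actually requires.
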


\subsection{$\Theta_n\left( r^2/t\right)$ and some elementary estimates}\label{subsec:Theta}
The expression $\displaystyle\Theta_n\left( r^2/t\right)$ will appear in many places in our paper. It seems that there is no nice closed formula for $\Theta_n$ in the literature, and we quickly jot down a few facts from \cite{GM2021}. 

Firstly, one estimate that can be quickly calculated from the definition via heat kernel is the following:
\beq
\Theta_n\left(r^2/t\right) = 
\frac{1}{\Gamma\left(\frac{n}{2}\right)}\int_0^t \Gamma\left(\frac{n}{2}, \frac{r^2}{4s}\right) \; ds,
\eeq
where $\Gamma(s, x)$ is the upper incomplete Gamma function defined above.
As discussed in the comparability estimates above, since we will work on curved spaces as well, our main regime of interest is $t \leq r^2$, and $r$ being small. Via asymptotics involving the incomplete Gamma function, given $\varepsilon > 0$, one can choose $r^2/t = c$ large enough to get the following crude upper bound:
\beq\label{ineq:asymp_gamma}
\Theta_n\left(r^2/t\right) \leq \frac{1 + \varepsilon}{\Gamma\left(\frac{n}{2}\right)} c^{\frac{n}{2} - 1} \; t\; e^{-\frac{r^2}{4t}}.
\eeq
Lastly, another crude upper bound for $\Theta_n$ can be obtained via the well-known reflection principle for one-dimensional Brownian particles. Inscribe a hypercube inside the sphere of radius $r$, whence each side of the hypercube has length $\frac{2r}{\sqrt{n}}$. So by the reflection principle, the probability of a Brownian particle escaping the cube is given by 
\beq\label{eq:ref_prin}
\left(2\Phi\left(-\frac{r}{\sqrt{nt}}\right)\right)^n, 
\eeq
where 
\beq\label{normal_cumulative}
\Phi(s) = \int_{-\infty}^s \frac{1}{\sqrt{2\pi }}e^{-\frac{s^2}{2}}\; ds
\eeq
is the cumulative distribution function of the one dimensional normal distribution. On calculation, it can be checked that in the regime $r^2/t \geq n$, the expression in (\ref{eq:ref_prin}) translates to 
\beq\label{ineq:theta_large}
\Theta_n\left(r^2/t\right) \leq \frac{2^{3n/2}}{\pi^{n/2}} e^{-\frac{r^2}{2t}}.
\eeq

\section{
Level sets, Brownian exits and hot spots of the ground state eigenfunction}\label{subsec:convex_max_points}
As an immediate application, we first prove that on a convex domain $\Omega \subset \RR^n$, the level sets of the ground state Dirichlet eigenfunction are convex. This is folklore at this point, but we hope our approach might reveal some insights into the heat theoretic methods. 
The main ideas of the following proposition are implicit in \cite{BL76}. 
\begin{proposition}\label{prop:log_concave_p_t}
Let $\Omega \subset \RR^n$ be a convex domain 
and consider the Brownian motion inside $\Omega$ which is stopped upon impact at the boundary. 
Then, $q_t(x)$ is log-concave. In other words, for all $x, y \in \Omega$ 
one has
\beq
q_t(x)q_t(y) \leq q_t\left(\frac{x + y}{2}\right)^2.
\eeq
\end{proposition}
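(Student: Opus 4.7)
The strategy is to establish midpoint log-concavity by reducing to a finite-dimensional Gaussian integral and invoking the classical Prékopa-Leindler inequality. Two ingredients combine: convexity of $\Omega$ on the one hand, and joint log-concavity of the Gaussian transition densities of Brownian motion on the other.

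I would first fix a partition $0 = t_0 < t_1 < \cdots < t_N = t$ and define the discretised survival probability
\[
q_t^{(N)}(x) := \mathbb{P}_x\bigl[ B_{t_i} \in \Omega \text{ for } i = 1, \ldots, N\bigr] = \int_{\RR^{nN}} \mathbf{1}_{A^{(N)}_x}(\omega)\,\rho^{(N)}(\omega)\, d\omega,
\]
where $A^{(N)}_x := \{\omega = (\omega_1, \ldots, \omega_N) \in \RR^{nN} : x + \omega_i \in \Omega \ \forall i\}$ and $\rho^{(N)}$ is the joint Gaussian density at the sampling times $t_i$ of a centered Brownian motion on $\RR^n$. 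Convexity of $\Omega$ gives, coordinate-by-coordinate, the set inclusion $\tfrac{1}{2}A^{(N)}_x + \tfrac{1}{2}A^{(N)}_y \subseteq A^{(N)}_{(x+y)/2}$, while $\rho^{(N)}$ is log-concave on $\RR^{nN}$ since its logarithm is a negative-definite quadratic. Combining these two facts, for every $\omega, \tilde\omega \in \RR^{nN}$,
\[
\mathbf{1}_{A^{(N)}_{(x+y)/2}}\!\!\left(\tfrac{\omega + \tilde\omega}{2}\right)\rho^{(N)}\!\!\left(\tfrac{\omega + \tilde\omega}{2}\right) \ \geq\ \sqrt{\mathbf{1}_{A^{(N)}_x}(\omega)\rho^{(N)}(\omega)\cdot \mathbf{1}_{A^{(N)}_y}(\tilde\omega)\rho^{(N)}(\tilde\omega)},
\]
and Prékopa-Leindler on $\RR^{nN}$ then delivers $q_t^{(N)}((x+y)/2)^2 \geq q_t^{(N)}(x)\, q_t^{(N)}(y)$.

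To finish, I would pass to the limit $N \to \infty$ along a refining sequence of partitions. The events $\{B_{t_i} \in \Omega \ \forall i\}$ are monotone decreasing in $N$ and by continuity of Brownian paths their almost-sure limit coincides with the continuous-time survival event $\{B_s \in \Omega \ \forall s \in [0,t]\}$, so $q_t^{(N)}(x) \downarrow q_t(x)$ and the midpoint log-concavity inequality survives in the limit. The main technical obstacle I anticipate is precisely this passage to the continuum: one must check that the discrete-time survival events genuinely exhaust the continuous-time one, but since convex domains automatically have locally Lipschitz boundaries, no survival probability can leak through the limit and this step is routine. As a more compact alternative, the statement also follows at once from the Brascamp-Lieb joint log-concavity of the Dirichlet heat kernel $p^D_t(x, y)$ on a convex domain, since the $y$-marginal $q_t(x) = \int_\Omega p^D_t(x, y)\, dy$ inherits log-concavity from the same Prékopa-Leindler projection principle.
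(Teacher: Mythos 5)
Your proposal is correct and follows essentially the same route as the paper: discretise the survival probability at finitely many times, observe that the resulting integrand (product of Gaussian transition kernels restricted to $\Omega^N$) is jointly midpoint log-concave thanks to the convexity of $\Omega$ and of $|z_i-z_j|^2$, apply Pr\'ekopa--Leindler, and pass to the limit along refining partitions. In fact your write-up makes explicit two steps the paper leaves implicit (the invocation of Pr\'ekopa--Leindler and the justification that the discrete-time survival events exhaust the continuous-time one), so no changes are needed.
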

\begin{proof}
For $x \in \Omega$, by definition, 
\beq
q_t(x) = \int_\Omega p(t, x, z)\; dz,
\eeq
where $p(., ., .) \in \RR^{+} \times \RR^n \times \RR^n$ is the Euclidean heat kernel. The idea is to use the Trotter product formula to discretise the problem. Let $n \in \NN$, and let 
\beq
p_{N,t}(x, z) = \frac{1}{\left(4\pi \frac{t}{N}\right)^{n/2}}e^{- \frac{|x - z|^2}{4 \frac{t}{N}}}, 
\eeq
and let $p_{N, t}(x)$ denote the probability that for $j = 1,\cdots, N - 1$, the Brownian particle is inside $\Omega$ at times $t_j = jt/N$.
In the limit, one has that (for regular enough boundary  $\pa\Omega$)
\beq
\lim_{N \to \infty} p_{N, t}(x) = q_t(x).
\eeq
We also have that (via heat semigroup property)
\begin{align*}
p_{N, t}(x) & = \int_{\Omega^N} p_N(t, x, z_1)\; p_N(t, z_1, z_2)\cdots p_N(t, z_{N - 1}, z) \; dz_1\cdots dz_{N - 1}\; dz.
\end{align*}
By way of notation, let 
\beq
\Psi_{N, t}(x, z) = p_N(t, x, z_1)\; p_N(t, z_1, z_2)\cdots p_N(t, z_{N - 1}, z).
\eeq
Since the functions $|z_i - z_j|^2$ are convex, we see that 
\beq
\Psi_{N, t}(x, z)\;\Psi_{N, t}(y, w) \leq \Psi_{N, t}\left(\frac{x + y}{2}, \frac{z + w}{2}\right)^2.
\eeq
The conclusion now follows via a standard limiting procedure. 
\end{proof}

As is well-known, using the real analyticity of the eigenfunction, 
Proposition \ref{prop:log_concave_p_t} gives us a quick proof of the fact that 
\begin{corollary}\label{cor:convex_dom_hot_spot_unique}
 On a convex Euclidean domain $\Omega$, the ground state eigenfunction of the Dirichlet Laplacian has a unique point of maximum.
\end{corollary}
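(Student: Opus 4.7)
The plan is to distill log-concavity of the ground state $\varphi_1$ from log-concavity of the survival probability $q_t$ by taking a long-time limit, and then to combine log-concavity with real analyticity of $\varphi_1$ to force uniqueness of the maximum. Using the spectral expansion of the Dirichlet heat kernel on $\Omega$,
\begin{equation*}
p_\Omega(t, x, z) = \sum_{k} e^{-\lambda_k t}\varphi_k(x)\varphi_k(z),
\end{equation*}
and integrating in $z$ gives
\begin{equation*}
q_t(x) = \sum_k c_k\, e^{-\lambda_k t}\varphi_k(x), \qquad c_k := \int_\Omega \varphi_k.
\end{equation*}
Since $\varphi_1 > 0$ in $\Omega$, we have $c_1 > 0$, and the spectral gap $\lambda_1 < \lambda_2$ together with standard $L^\infty$ bounds on $\varphi_k$ (which control the tail of the series) yields the pointwise convergence
\begin{equation*}
e^{\lambda_1 t} q_t(x) \longrightarrow c_1 \varphi_1(x) \qquad \text{as } t \to \infty.
\end{equation*}

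By Proposition \ref{prop:log_concave_p_t}, each $q_t$ is log-concave on the convex domain $\Omega$, and multiplication by the positive constant $e^{\lambda_1 t}$ preserves log-concavity. Since $q_t > 0$ throughout $\Omega$ and a pointwise limit of concave functions on a convex set is concave, it follows that $\log(c_1 \varphi_1)$, and hence $\log \varphi_1$, is concave; that is, $\varphi_1$ is log-concave on $\Omega$. In particular, the set $M_{\varphi_1} := \{ x \in \Omega : \varphi_1(x) = \|\varphi_1\|_{L^\infty(\Omega)}\}$ of maximum points is a (nonempty) convex subset of $\Omega$.

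Suppose for contradiction that $M_{\varphi_1}$ contains two distinct points $x_0, x_1$. Then $\varphi_1 \equiv \|\varphi_1\|_{L^\infty(\Omega)}$ along the segment $[x_0, x_1]$. Let $L \subset \Omega$ be the maximal open chord of $\Omega$ through $x_0$ in the direction $x_1 - x_0$. The map $t \mapsto \varphi_1(x_0 + t(x_1 - x_0))$ is a real analytic function of one variable, constantly equal to $\|\varphi_1\|_{L^\infty(\Omega)}$ on a non-degenerate subinterval; by the identity theorem for one-variable real analytic functions, it is constant on all of $L$. Passing $t$ to an endpoint of $L$, which lies on $\pa \Omega$ where $\varphi_1 = 0$ (continuity of the ground state up to the boundary holds since a bounded convex domain is Lipschitz), yields the contradiction $\|\varphi_1\|_{L^\infty(\Omega)} = 0$.

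The principal difficulty in this scheme is the transition from Proposition \ref{prop:log_concave_p_t} to log-concavity of $\varphi_1$: it requires the positivity $q_t > 0$ everywhere in $\Omega$ so that taking logarithms is legitimate, convergence of the spectral series sufficiently well-behaved to pass to the limit pointwise, and the observation that log-concavity is preserved under positive pointwise limits. Once log-concavity is in hand, the chord-and-identity-theorem argument for uniqueness is routine.
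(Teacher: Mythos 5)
Your proposal is correct and follows essentially the same route as the paper: expand $q_t$ spectrally, pass the log-concavity of Proposition \ref{prop:log_concave_p_t} to the limit $e^{\lambda_1 t}q_t \to c_1\varphi_1$ to conclude the max set is convex, and then rule out a segment of maxima by real analyticity. Your endgame --- extending constancy along the full chord via the one-variable identity theorem and hitting the zero boundary value --- is a cleaner and arguably more airtight version of the paper's appeal to the vanishing set of $|\nabla\varphi_1|^2$ being a real analytic variety.
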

\subsection{Max point of ground state eigenfunction and supremum norm bounds}\label{sec:supremum_bound} Let $M$ be a closed Riemannian manifold, and let $\Omega_\lambda$ be a nodal domain for a Laplace eigenfunction $\varphi_\lambda$ (assumed positive on $\Omega_\lambda$ without loss of generality), and let $x_0 \in \Omega_\lambda$ satisfy 
$$
\varphi_\lambda(x_0) = \| \varphi_\lambda\|_{L^\infty(\Omega_\lambda)}.
$$

We now prove that on any nodal domain, the maximum value of an eigenfunction cannot become arbitrarily small, which is Theorem \ref{thm:DF_ext} above. To put the result in context, we recall the following insight from \cite{DF1988}:
\begin{proposition}
On any ball $B(x, r) \subseteq M$ where $r \gtrsim \lambda^{-1/2}$ the eigenfunction $\varphi_\lambda$ satisfies
\beq\label{ineq:wav_ball_DF}
\| \varphi_\lambda \|_{L^\infty(B(x, r))} \geq c_1e^{-c_2\sqrt{\lambda}}\| \varphi_\lambda\|_{L^\infty(M)}, 
\eeq
where $c_j := c_j(M, g)$. 
\end{proposition}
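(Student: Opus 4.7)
The plan is to invoke the Donnelly--Fefferman harmonic-lifting trick: view $\varphi_\lambda$ as the restriction to $\{t = 0\}$ of the harmonic function
$$
u(x, t) := \cosh(\sqrt{\lambda}\,t)\,\varphi_\lambda(x)
$$
on $M \times \RR$, which satisfies $(\pa_t^2 + \Delta)u = 0$ as already noted in the introduction. Monotonicity of Almgren's frequency for $u$ then yields a scale-sensitive doubling inequality for $\varphi_\lambda$: for any $p \in M$ and $\rho \leq \rho_\star$ (a purely geometric cutoff),
$$
\sup_{B(p,\,2\rho)}|\varphi_\lambda| \;\leq\; \exp\!\bigl(C(1 + \rho\sqrt{\lambda})\bigr)\sup_{B(p,\,\rho)}|\varphi_\lambda|,
$$
with $C = C(M,g)$. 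The salient feature is that at wavelength scale $\rho \sim \lambda^{-1/2}$ one doubling costs only a universal factor, while at fixed scale it costs $e^{C\sqrt{\lambda}}$.

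Next I would iterate this doubling from the wavelength scale up to a fixed scale. Set $\rho_0 := c_0\lambda^{-1/2} \leq r$ and $\rho_{k+1} := 2\rho_k$, stopping at the first $K$ with $\rho_K \geq \rho_\star$, so $K \lesssim \log\sqrt{\lambda}$. The accumulated exponent is
$$
\sum_{k = 0}^{K-1} C\bigl(1 + 2^k\rho_0\sqrt{\lambda}\bigr) \;\lesssim\; \log\lambda + \rho_\star\sqrt{\lambda} \;\lesssim\; \sqrt{\lambda},
$$
so
$$
\sup_{B(x,\,\rho_\star)}|\varphi_\lambda| \;\leq\; e^{C_1\sqrt{\lambda}}\sup_{B(x,\,\rho_0)}|\varphi_\lambda| \;\leq\; e^{C_1\sqrt{\lambda}}\sup_{B(x,\,r)}|\varphi_\lambda|,
$$
where the last step uses $B(x,\rho_0) \subseteq B(x,r)$ since $r \gtrsim \lambda^{-1/2}$.

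Finally, to get from the fixed-scale ball around $x$ to the global max, pick $x_{\max} \in M$ with $|\varphi_\lambda(x_{\max})| = \|\varphi_\lambda\|_{L^\infty(M)}$ and cover a minimizing geodesic from $x$ to $x_{\max}$ by a Harnack chain $\{B(p_i, \rho_\star)\}_{i=0}^{N}$ of overlapping balls with $p_0 = x$, $p_N = x_{\max}$, and $N \lesssim \mathrm{diam}(M)/\rho_\star = O(1)$. Each inclusion $B(p_{i+1}, \rho_\star) \subset B(p_i, 2\rho_\star)$ applied with the doubling inequality at fixed scale costs a factor $e^{C_2\sqrt{\lambda}}$, so after $N$ steps
$$
\|\varphi_\lambda\|_{L^\infty(M)} \;\leq\; \sup_{B(x_{\max},\,\rho_\star)}|\varphi_\lambda| \;\leq\; e^{NC_2\sqrt{\lambda}}\sup_{B(x,\,\rho_\star)}|\varphi_\lambda| \;\leq\; e^{C_3\sqrt{\lambda}}\sup_{B(x,\,r)}|\varphi_\lambda|,
$$
which is exactly (\ref{ineq:wav_ball_DF}) after rearranging.

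The main obstacle is the first step: establishing the scale-sensitive doubling inequality with the explicit linear-in-$\rho\sqrt{\lambda}$ frequency bound on a curved manifold. One has to execute Almgren's monotonicity formula for $u$ on $M \times \RR$, where the curvature of the ambient geometry produces error terms in the derivative of the frequency that must be absorbed into the ``$1$'' in $C(1 + \rho\sqrt{\lambda})$; meanwhile the $\cosh(\sqrt{\lambda}t)$ factor is what drives the $\rho\sqrt{\lambda}$ contribution. A secondary (but routine) subtlety is verifying that the dyadic sum of costs really telescopes to $\sqrt{\lambda}$ rather than the naive $\sqrt{\lambda}\log\lambda$; as shown above this works because the small-scale ``$1$'' contributions sum only to $\log\lambda$, which is dominated by $\sqrt{\lambda}$.
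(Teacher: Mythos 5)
The paper offers no proof of this proposition --- it is quoted as a known result from Donnelly--Fefferman --- so there is nothing to compare your route against; I can only assess the argument on its own terms, and it contains a genuine gap. The scale-sensitive doubling inequality you take as your key lemma, $\sup_{B(p,2\rho)}|\varphi_\lambda| \leq e^{C(1+\rho\sqrt{\lambda})}\sup_{B(p,\rho)}|\varphi_\lambda|$, is false, and not because of curvature error terms: it already fails on the round sphere. Take the highest-weight spherical harmonic $\varphi = \sin^\ell\theta\,\cos(\ell\phi)$ on $S^2$, with $\lambda = \ell(\ell+1)$, centered at the pole. The vanishing order there is $\ell$, so the Almgren frequency of the harmonic lift at the pole tends to $\ell \sim \sqrt{\lambda}$ as $\rho \to 0$; since the frequency is monotone \emph{increasing} in $\rho$, it is of size $\sqrt{\lambda}$ at \emph{every} scale down to zero, and each doubling costs a factor $\approx 2^{\ell} = e^{c\sqrt{\lambda}}$ even at wavelength scale, where your lemma predicts an $O(1)$ cost. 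What monotonicity actually gives is the uniform bound $N(p,\rho) \leq N(p,\rho_\star) \leq C\sqrt{\lambda}$, i.e.\ the classical doubling estimate $\sup_{B(p,2\rho)}|\varphi_\lambda| \leq e^{C\sqrt{\lambda}}\sup_{B(p,\rho)}|\varphi_\lambda|$ with no improvement at small $\rho$.

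Feeding the correct uniform bound into your dyadic iteration from $\rho_0 \sim \lambda^{-1/2}$ up to $\rho_\star$ requires $K \sim \log\sqrt{\lambda}$ doublings and accumulates $e^{CK\sqrt{\lambda}} = e^{C\sqrt{\lambda}\log\lambda}$, so your scheme only yields $\sup_{B(x,r)}|\varphi_\lambda| \geq e^{-C\sqrt{\lambda}\log\lambda}\|\varphi_\lambda\|_{L^\infty(M)}$ for genuinely wavelength-radius balls, equivalently $\sup_{B(x,r)}|\varphi_\lambda| \geq (cr)^{C\sqrt{\lambda}}\|\varphi_\lambda\|_{L^\infty(M)}$. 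The same example shows this loss is not an artifact of the method: for $r = a\lambda^{-1/2}$ at the pole one has $\sup_{B}|\varphi| \approx (a/\ell)^{\ell} = e^{-\ell\log(\ell/a)}$, which violates the clean bound $c_1e^{-c_2\sqrt{\lambda}}$ for large $\ell$, so the statement as written can only be rescued either with the $\log\lambda$ correction or by taking $r$ bounded below by a fixed $r_0(M,g)$. Your final step --- the Harnack chain of $O(1)$ fixed-radius balls, each doubling costing $e^{C\sqrt{\lambda}}$ --- is correct and by itself proves the fixed-radius version of the proposition; it is only the descent from fixed scale to wavelength scale, which rests entirely on the false lemma, that does not go through.
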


Now we give a proof of Theorem \ref{thm:DF_ext}. 

\begin{proof}
The proof 
depends on two results, which we state below.

Let $B \subset M$ be a ball, and $E \subseteq B$ be any set with positive volume. Then the Remez type inequality introduced in \cite{LM2018, LM2020} basically says that $\varphi_\lambda$ cannot be arbitrarily small on $E$, in the following sense:
\begin{theorem}\label{thm:Remez}
    We have that 
    \beq\label{ineq:Remez}
    \sup_{E} |\varphi_\lambda| \geq c\left( \frac{c|E|}{|B|}\right)^{C\sqrt{\lambda}}\sup_{B}|\varphi_\lambda|,
    \eeq
    where $c, C > 0$ depend on the geometry $(M, g)$. 
\end{theorem}
In ~\cite{GM2018}, 
 the authors proved that one can almost fully inscribe a wavelength radius ball inside a nodal domain, in the following sense:
\begin{theorem}\label{alfulins}
	Let $\dim M \geq 3, \epsilon_0 > 0 $ be fixed and $ x_0 \in \Omega_\lambda $ be such that $ |\varphi_\lambda(x_0)| = \text{max}_{\Omega_\lambda}|\varphi_\lambda| $. There exists $ r_0 = r_0 (\epsilon_0) $, such that
	\begin{equation}\label{Vol}
	\frac{|B_{r_0} \cap \Omega_\lambda |}{|B_{r_0}|} \geq 1 - \epsilon_0,
	\end{equation}
	where $ B_{r_0} $ denotes $ B\left(x_0, \frac{r_0}{\sqrt{\lambda}}\right) $.
\end{theorem}
    Using (\ref{ineq:wav_ball_DF}), we have that $$
    \sup_{B(x, \frac{1}{\sqrt{\lambda}})} |\varphi_\lambda| \gtrsim e^{-C\sqrt{\lambda}}\V \varphi_\lambda\V_{L^\infty(M)}.
    $$
Calling $E := B_{r_0} \cap \Omega_\lambda$ from Theorem \ref{alfulins} above and using Theorem \ref{thm:Remez}, it is clear that 

\begin{align*}
    \sup_E |\varphi_\lambda| & \geq \left(c(1 - \epsilon_0)\right)^{C\sqrt{\lambda}}\sup_{B_{r_0}}|\varphi_\lambda|\\
    & \geq c'\;e^{-C\sqrt{\lambda}}\|\varphi_\lambda\|_{L^\infty(M)}.
\end{align*}
\end{proof} 

To put Theorem \ref{thm:DF_ext} in perspective: observe that the proof would be trivial if one were to have embeddability of wavelength balls inside nodal domains (in other words, wavelength inner radius). However, as noted in \cite{GM2019}, the heuristic also goes in the other direction. To put it simply: a higher value of the eigenfunction would give a bigger inner radius. A quantitative estimate of this is recorded in Theorem 1.8 of \cite{GM2019}, which gives us the following 
\begin{corollary}
    Let $M$ be a closed Riemannian manifold of dimension $\geq 3$. On any nodal domain $\Omega_\lambda \subseteq M$, we have that 
    $$
    \inrad\left(\Omega_\lambda\right) \gtrsim \lambda^{-\frac{n - 1}{4} - \frac{1}{2n}}.
    $$
\end{corollary}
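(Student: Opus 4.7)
The plan is to combine Theorem \ref{thm:DF_ext} with Theorem 1.8 of \cite{GM2019}, treating the latter as a black box. By Theorem \ref{thm:DF_ext}, if $x_0 \in \Omega_\lambda$ is a maximum point of $|\varphi_\lambda|$ inside the nodal domain, then
\beq
\sup_{\Omega_\lambda}|\varphi_\lambda| \geq c\, e^{-C\sqrt{\lambda}}\, \|\varphi_\lambda\|_{L^\infty(M)},
\eeq
or equivalently $\log\left(\|\varphi_\lambda\|_{L^\infty(M)}/\sup_{\Omega_\lambda}|\varphi_\lambda|\right) \lesssim \sqrt{\lambda}$. This first step is the main new input supplied by the present paper; the exponential smallness is unimportant, what matters is that the logarithm of the ratio is controlled by $\sqrt{\lambda}$.

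Next, I would invoke Theorem 1.8 of \cite{GM2019}, which provides a quantitative lower bound on $\inrad(\Omega_\lambda)$ in terms of the ratio $\sup_{\Omega_\lambda}|\varphi_\lambda|/\|\varphi_\lambda\|_{L^\infty(M)}$. The crucial feature to exploit is that the ratio enters the estimate only through its logarithm (or a comparable slowly-varying function), which reflects the doubling/growth behavior of eigenfunctions. Substituting the bound from the first step converts the exponentially small sup-norm ratio into a polynomial-in-$\lambda$ factor; tracking the exponents carefully recovers the Mangoubi exponent of \cite{Man_CPDE, GM2018} and produces the claimed estimate
\beq
\inrad(\Omega_\lambda) \gtrsim \lambda^{-\frac{n-1}{4} - \frac{1}{2n}}.
\eeq

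The main obstacle is bookkeeping: verifying that the combination of the wavelength baseline $\lambda^{-1/2}$ with the logarithmic-in-ratio correction from Theorem 1.8 of \cite{GM2019}, evaluated at a ratio of size $e^{-C\sqrt{\lambda}}$, yields precisely the target exponent $-\frac{n-1}{4} - \frac{1}{2n}$. This is a mechanical computation once the black-box estimate is written explicitly, so I do not expect any genuine conceptual difficulty beyond assembling the two stated ingredients in the correct order.
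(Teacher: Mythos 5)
Your proposal is correct and takes essentially the same route as the paper: the paper also derives the corollary by feeding the lower bound of Theorem \ref{thm:DF_ext} into Theorem 1.8 of \cite{GM2019} as a black box, offering no more computational detail than you do. The one point resting on your reading of the black box --- that the sup-norm ratio enters the inner-radius estimate of \cite{GM2019} only through its logarithm, so that $e^{-C\sqrt{\lambda}}$ contributes merely a polynomial factor --- is exactly what the paper implicitly relies on as well.
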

This recovers well-known results from \cite{Man_CPDE, GM2018}. 
\begin{remark}
Theorem \ref{alfulins} implies that any nodal domain $\Omega_\lambda$ satisfies $|\Omega_\lambda| \gtrsim \lambda^{-n/2}$, which is a fact trivial on Euclidean domains (because of the Faber-Krahn inequality), but gives a refinement of the Faber-Krahn inequality on curved spaces.
Also, recall that the proof of Theorem \ref{alfulins} is  essentially heat theoretic, which makes the proof of Theorem \ref{thm:DF_ext} an application of heat theoretic techniques. Also, as observed in \cite{Li83}, the proof there does not work readily on manifolds, which is why we resort to Theorem \ref{alfulins}.
\end{remark}

\subsection{Level set separation in dimension two}\label{sec:level_Set}

To begin with, we prove that for a $2$-dimensional domain inside a Riemannian surface $M$, a Brownian particle starting close to the boundary exits the domain quickly. To allow for Euclidean comparability, on a curved setting one has to phrase such a result in terms of wavelength distance scales and wavelength$^2$ time scales, where the frequency is high enough. The case of the planar domain will be automatically subsumed by the proof. 
\begin{proposition}\label{prop:planar_escape_prob} Let $M$ be a smooth Riemannian surface, and let $\Omega \subset M$ be a bounded domain, such that the ground state eigenvalue $\lambda$ of $\Omega$ satisfies $\lambda \geq \lambda_0(M, g)$. Let $r_0, t_0$ be small enough positive constants, and let $x \in \Omega$ be such that $\dist (x, \pa \Omega) \leq r_0\lambda^{-1}$. Then $p_{t_0\lambda^{-1}}(x) \geq C$, where $C$ is a constant whose value depends only on $r_0^2/t_0$.
\end{proposition}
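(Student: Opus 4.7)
The plan is to first reduce to Euclidean $\RR^2$ via the sub-wavelength comparability of hitting probabilities (Theorem~\ref{compare}), then exploit the quasi-conformal nature of planar Brownian motion to bound the exit probability from below.

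For the reduction, I would use that since $\lambda \geq \lambda_0(M,g)$ is assumed sufficiently large, the wavelength $\lambda^{-1/2}$ lies below both the injectivity radius of $M$ and the bi-Lipschitz distortion scale of some chart $(U,\phi)$ around $x$. Since Brownian diffusion over time $t_0\lambda^{-1}$ explores at a spatial scale of order $\sqrt{t_0}\,\lambda^{-1/2}$, which is sub-wavelength, Theorem~\ref{compare} gives $p_{t_0\lambda^{-1}}(x)\asymp\psi^e_{\phi(\partial\Omega\cap U)}(t_0\lambda^{-1},\phi(x))$ up to constants depending only on $(M,g)$. The dimensionless ratio of squared distance to time, $r_0^2/t_0$, is scale-invariant under Brownian rescaling, so it suffices to prove the corresponding Euclidean statement: if $\Omega\subset\RR^2$ and $\dist(x,\partial\Omega)\le r$, then planar Brownian motion started at $x$ hits $\partial\Omega$ within time $T$ with probability bounded below by some $C(r^2/T)>0$.

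For the Euclidean estimate I would fix a nearest boundary point $y\in\partial\Omega$ with $|x-y|\le r$ and invoke Beurling's projection theorem: the hitting probability of any closed set $E\subset\RR^2$ by planar Brownian motion from $x$ dominates the hitting probability of its circular projection $E^*:=\{|z-x|\colon z\in E\}$ by a one-dimensional radial process. Since $y\in\partial\Omega$ lies at distance $\le r$ from $x$ and $\partial\Omega$ is a non-polar continuum in the plane (because $\Omega$ is a bounded domain with finite ground state eigenvalue), $E^*$ contains a non-trivial segment reaching down to $|x-y|\le r$. This reduces the planar problem to a one-dimensional Gaussian exit estimate which, via the reflection principle, gives a lower bound of order $2\Phi(-r/\sqrt{T})$, a quantity depending only on $r^2/T = r_0^2/t_0$.

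The hard part will be controlling the radial projection $E^*$ quantitatively for arbitrary (not necessarily smooth) boundaries, since only the connected component of $\partial\Omega$ through $y$ is guaranteed to project to a segment of sizable length. For boundaries with Lipschitz (or better) regularity, the bound is immediate because a ball of radius $O(\sqrt{T})$ around $y$ already contains a non-trivial arc of $\partial\Omega$. In the general case I would supplement Beurling's reduction with Wiener's regularity criterion at $y$ (which must be satisfied since $\Omega$ supports a genuine Dirichlet ground-state eigenfunction) together with a logarithmic-capacity lower bound for continua in the plane, and these together produce the required effective lower bound on $C(r_0^2/t_0)$.
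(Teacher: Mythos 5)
Your main line of attack is essentially the one the paper takes: reduce to planar Brownian motion and then use a Beurling-type projection/symmetrization result to bound from below the probability of hitting the complement of $\Omega$ within time $t$ in terms of the dimensionless ratio $\dist(x,\pa\Omega)^2/t$. Your reduction to the Euclidean case via Theorem \ref{compare} is a legitimate alternative to what the paper actually does, namely pulling back to the unit disk by a $K$-quasiconformal map $h:B(x,r)\to D$ with $h(x)=0$ (Theorem 3.2 of \cite{Man_Bull}); both routes only cost multiplicative constants depending on $(M,g)$, which is harmless for a lower bound. One technical difference: the paper never invokes a finite-time projection theorem. It converts the finite-time statement into a harmonic-measure statement by choosing $t$ so that the particle exits the comparison disk by time $t$ with probability at least $1-\delta$, writes $w(0,E)=\psi_E(t,0)+\psi_E(\infty,0)\le \epsilon+\delta$, and then applies the classical Beurling--Nevanlinna bound $w(0,E)\ge 1-c\sqrt{\dist(0,E)}$ to force $E$ away from the origin, contradicting $\dist(x,\pa\Omega)\le kr$. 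If you want to work with the time-$T$ hitting probability directly, you either need a parabolic version of Beurling's projection theorem (true, but much less standard than the harmonic-measure version) or the same truncation trick.

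The genuine gap is in your last paragraph. First, the existence of a Dirichlet ground state does not imply Wiener regularity of $\pa\Omega$: every bounded open set has an $H^1_0$ ground state, while irregular boundary points (for instance isolated, hence polar, points) are perfectly possible. Second, and more importantly, Wiener's criterion is qualitative (divergence of a series) and cannot produce a constant depending only on $r_0^2/t_0$. Concretely, if $\Omega$ is a disk with a slit of length $\ell$ removed at distance $r$ from $x$, every boundary point is Wiener-regular, yet the probability of hitting the slit within time $T\sim r^2$ is of order $1/\log(r/\ell)$ and tends to $0$ as $\ell\to 0$, while the eigenvalue stays bounded; so the asserted conclusion can fail for such $\Omega$. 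No patch based on boundary regularity alone can therefore close the argument: one genuinely needs the component of the complement nearest to $x$ to be quantitatively thick at scale $r$ (e.g.\ a continuum of diameter $\gtrsim r$, so that its circular projection contains a full segment), which is exactly the hypothesis under which Beurling--Nevanlinna applies and which the paper's own proof implicitly assumes. In the intended applications (complements of nodal domains on surfaces) this thickness comes for free from Faber--Krahn type considerations, but it is not a consequence of the hypotheses as literally stated; you should add it as an assumption or verify it in context rather than appeal to Wiener's criterion.
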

The following proof appears implicitly in \cite{GM2021}, but since it seems quite useful we still include the main ideas here for the sake of completeness.
\begin{proof}
Suppose we have $\text{dist} (x, \pa \Omega) < kr$, where $k$ is a constant. It is known that there exists such an $r$ such that for any eigenvalue $\lambda$ and disk $B \subset M$ of radius $\leq r\lambda^{-1/2}$, there exists a $K$-quasiconformal map $h : B(x, r) \to D \subset \CC$, where $D$ is the unit disk in the plane such that $x$ is mapped to the origin (for more details, see Theorem 3.2 of \cite{Man_Bull}, and also \cite{Nad91}, \cite{NPS05}). 
 Now, define $E : = D \setminus 
h(\Omega \cap B(x, kr))$, and assume that $\psi_E(t, 0) \leq \epsilon \leq \delta$, where $\delta$ is a small enough constant, assumed without loss of generality to be $< 1/2$. We will now investigate the implication of the above assumption on $
w(0, E) $, the harmonic measure of the set $E$ with a pole at the origin. Adjusting the ratio $k^2r^2/t$ suitably depending on $\delta$, we can arrange that the probability of a Brownian particle starting at at the origin to hit the boundary $\pa D$, and hence $h (\pa B(x, kr))$ within time $t$ is at least $1 - \delta$. Setting $\psi_E (\infty, 0)$ to be the probability of the Brownian particle starting at $x$ to hit $E$ after time $t$, we have that 
$$
w(0, E) = \psi_E(t, 0) + \psi_E(\infty, 0) \leq \epsilon + \delta \leq 2\delta.$$
On the other hand, by the Beurling-Nevanlinna theorem  (see \cite{Ahl73}, Section $3$-$3$), 
$$
w(0, E) \geq 1 - c\sqrt{\text{dist} (0, E)}, $$
which shows that 
$$
\text{dist} (0, E) \gtrsim (1 - 2\delta)^2.$$
This proves our contention.
\end{proof}

\vspace{5mm}
 \begin{figure}[ht]
\centering
\includegraphics[scale=0.14]{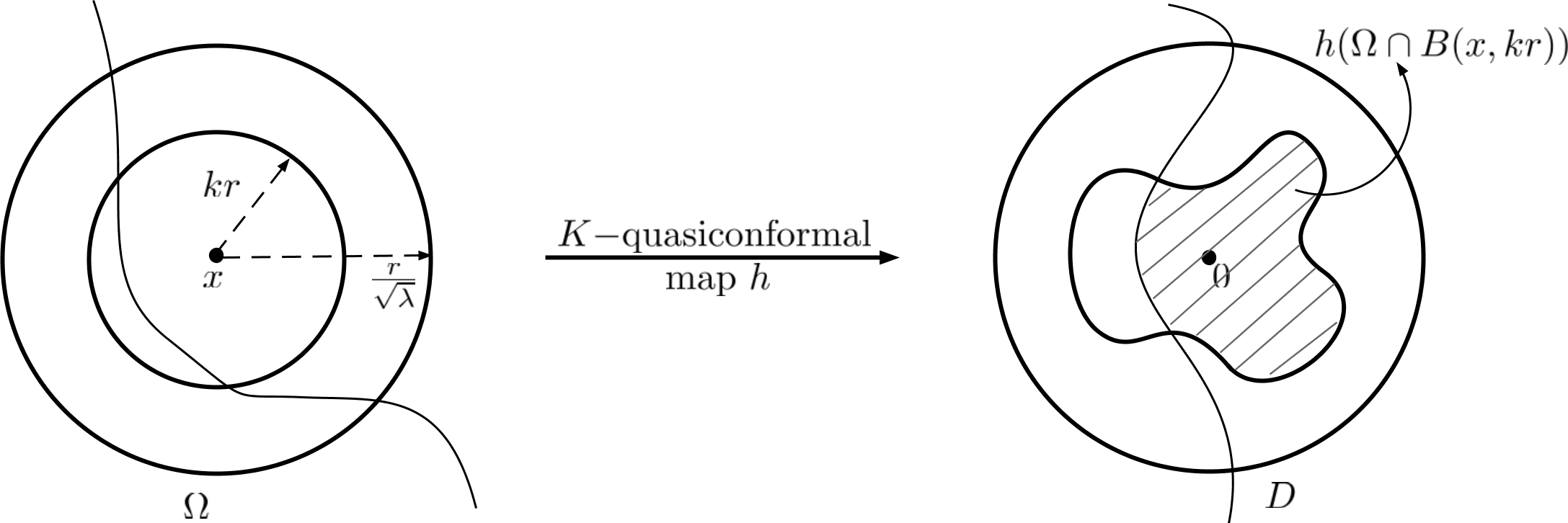}\label{diag3}
\caption{Quasi-conformal mapping on wavelength balls}
\end{figure}
     
    \vspace{5mm}

Observe that the main idea in the proof of Proposition \ref{prop:planar_escape_prob} is that the probability of a Brownian particle which starts at $x \in \RR^2$ hitting an obstacle $U$ within time $t$ has a lower bound in terms of $\dist(x, U)^2/t$. In plain language: a particle close to an obstacle hits the obstacle quickly. 
We note that such a statement is not true in dimensions $n \geq 3$. As an example, one can imagine $x$ being close to a ``sharp spike'' of very low capacity. For a Brownian particle starting at a point inside the nodal domain which is wavelength-near from the ``tip'' of one such spike, the probability of striking the nodal set is still negligible. 
The same phenomenon is at the heart of why in dimension $n = 2$, domains are proven to have wavelength 
inner radius (see \cite{Hay77/78}, and also our ensuing generalisation in Corollary \ref{cor:level_set_bound_dim_2}), but in higher dimensions, one cannot make such a conclusion without allowing for a volume error. 
For more details, refer to \cite{Li83},  \cite{Mu_2021_Wa} and \cite{GM2018} (in the special setting of real analytic manifolds, improvements 
can of course be made; see \cite{Ge2019}, which combines arguments from \cite{JM2009} and \cite{GM2018} and uses rather strongly the 
analyticity of the domain).

Now we prove a quantitative estimate saying that a Brownian particle starting at any max point of the ground state Dirichlet eigenfunction of a domain $\Omega$ (Euclidean, or contained inside a compact Riemannian manifold) takes some time to exit the domain. This has already been used implicitly in proofs in \cite{St_CPDE, GM2018} (and is also implicit in many earlier works of Ba\~{n}uelos, Burdzy et al, for example, see \cite{Bu2016}). Since this seems particularly useful, we record it formally. 

\begin{theorem}\label{thm:escape_prob_max_pt}
   Given $\alpha \in (0, 1]$, let $x \in \Omega$ satisfy $\varphi_\lambda(x) = \alpha \| \varphi_\lambda\|_{L^\infty(\Omega)}$. Then, 
   \beq\label{ineq:thm:escape_prob_max_pt}
   p_t(x) \leq 1 - \alpha e^{-\lambda t}.
   \eeq
\end{theorem}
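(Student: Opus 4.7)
The plan is to apply the Feynman--Kac representation directly to the eigenfunction $\varphi_\lambda$. Without loss of generality I would assume $\varphi_\lambda$ is positive on $\Omega$ (replacing $\Omega$ by a nodal domain/component if necessary), so that the inequalities below respect signs. By the Feynman--Kac formula recalled in \eqref{eq:F-K}, the function $\Phi(t,x) = e^{t\Delta_\Omega}\varphi_\lambda(x) = e^{-\lambda t}\varphi_\lambda(x)$ admits the stochastic representation
\[
e^{-\lambda t}\varphi_\lambda(x) = \mathbb{E}_x\bigl(\varphi_\lambda(\omega(t))\,\phi_\Omega(\omega,t)\bigr),
\]
where $\omega$ is a Brownian motion started at $x$ and $\phi_\Omega(\omega,t)$ is the indicator that $\omega$ has stayed in $\Omega$ up to time $t$.

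The key observation is a pointwise bound inside the expectation: on the event $\{\phi_\Omega(\omega,t) = 1\}$ we have $\varphi_\lambda(\omega(t)) \leq \|\varphi_\lambda\|_{L^\infty(\Omega)}$, while the integrand is zero on the complementary event. Taking expectations therefore yields
\[
e^{-\lambda t}\varphi_\lambda(x) \;\leq\; \|\varphi_\lambda\|_{L^\infty(\Omega)}\,\mathbb{E}_x\bigl(\phi_\Omega(\omega,t)\bigr) \;=\; \|\varphi_\lambda\|_{L^\infty(\Omega)}\bigl(1 - p_t(x)\bigr),
\]
where the last equality is just the definition of $p_t(x)$ as the probability that Brownian motion started at $x$ hits $\partial\Omega$ within time $t$, so that $1 - p_t(x) = \mathbb{E}_x(\phi_\Omega(\omega,t))$ is the survival probability.

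Finally, substituting the hypothesis $\varphi_\lambda(x) = \alpha\|\varphi_\lambda\|_{L^\infty(\Omega)}$ gives
\[
\alpha e^{-\lambda t}\|\varphi_\lambda\|_{L^\infty(\Omega)} \;\leq\; \|\varphi_\lambda\|_{L^\infty(\Omega)}\bigl(1 - p_t(x)\bigr),
\]
and dividing by $\|\varphi_\lambda\|_{L^\infty(\Omega)} > 0$ rearranges to the advertised bound $p_t(x) \leq 1 - \alpha e^{-\lambda t}$. There is no real obstacle here: the proof is essentially a one-line application of Feynman--Kac combined with the sup-norm bound on $\varphi_\lambda$. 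The only subtlety worth flagging is that the Brownian motion is killed at $\partial\Omega$, which is precisely what the indicator $\phi_\Omega$ encodes; on a closed Riemannian ambient manifold this is justified by Theorem~\ref{F-K-T}, so the argument works uniformly in the Euclidean and curved settings.
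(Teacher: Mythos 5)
Your argument is correct and is essentially identical to the paper's own proof: both apply the Feynman--Kac representation (\ref{eq:F-K}) to $\varphi_\lambda$, bound the integrand by $\|\varphi_\lambda\|_{L^\infty(\Omega)}$ on the survival event, identify $\mathbb{E}_x(\phi_\Omega(\omega,t)) = 1 - p_t(x)$, and rearrange using the hypothesis $\varphi_\lambda(x) = \alpha\|\varphi_\lambda\|_{L^\infty(\Omega)}$. No differences worth noting.
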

\begin{proof}
Without loss of generality, we assume that $\varphi_\lambda$ is positive on $\Omega_\lambda$. 
Now, using the Feynman-Kac formula given in (\ref{eq:F-K}) we have, 
\begin{align*}
    \Phi(t, x)= e^{-\lambda t}\varphi_\lambda(x)&= \mathbb{E}_x(\varphi_\lambda(\omega(t))\phi_{\Omega_\lambda}(\omega, t)) \leq  \|\varphi_\lambda\|_{L^{\infty}(\Omega_\lambda)}\mathbb{E}_x(\phi_{\Omega_\lambda}(\omega, t)).
\end{align*}
Using our assumption, we have that 
\begin{equation*}
    \alpha e^{-\lambda t}\|\varphi_\lambda\|_{L^{\infty}(\Omega_\lambda)} =  e^{-\lambda t}\varphi_\lambda(x) \leq \|\varphi_\lambda\|_{L^{\infty}(\Omega_\lambda)} (1- p_t(x)),
\end{equation*}
which gives
\begin{equation}
    p_t(x)  \leq 1- \alpha e^{-\lambda t},
\end{equation}
which proves our claim.
\end{proof}

Now we give a proof for Corollary \ref{cor:level_set_bound_dim_2}. 
\begin{proof}
    By the comparability of hitting probabilities with the Euclidean setting discussed above, it is enough to discuss the flat case. Pick $x \in L_\eta$. By Theorem \ref{thm:escape_prob_max_pt}, it is already known that 
    $$
    p_{t_0\lambda^{-1}}(x) \leq 1 - \alpha e^{-t_0}.
    $$
    Taking $r = r_0\lambda^{-1/2}$, and adjusting the ratio $r_0^2/t_0$, we see from Theorem \ref{prop:planar_escape_prob} that $\dist(x, \pa\Omega) \geq r_0\lambda^{-1/2}.$
\end{proof}

\begin{notation}\label{not:lev_sub_lev}
    Let $\varphi_\lambda$ be the ground state eigenfunction for some domain $\Omega_\lambda$. Then we define the $\delta$-level set $L_\delta := \{x \in \Omega_\lambda : |\varphi_\lambda(x)| = \delta \|\varphi_\lambda\|_{L^\infty(\Omega_\lambda)}\}$. Further, define $S_\delta := \left\{ x \in \Omega : |\varphi_\lambda (x)| \leq \delta \|\varphi_\lambda\|_{L^\infty(\Omega)} \right\}$ as the $\delta$-sublevel set of $\varphi_\lambda$ in $\Omega$. $S_\delta^c$ denotes the complement $\Omega \setminus S_\delta$, the $\delta$-superlevel set of $\varphi_\lambda$ in $\Omega$. 
\end{notation}

Before continuing further, we pose the following 
\begin{question}\label{ques:escape_max_to_level}
    Can one generalise Theorem \ref{thm:escape_prob_max_pt} to give an upper bound for probability that a Brownian particle starting at the max point intersects the level set $L_\delta$ 
    within time $t$? Besides being interesting in its own right, such an estimate, coupled with ideas in the proof of Theorem \ref{alfulins} should give a lower bound on the volumes of superlevel sets, giving an alternative proof of Theorem 1.2.4 of \cite{Pol2017}. 
\end{question}

We now delve more into the interaction of different level sets of ground state eigenfunctions. In a certain sense, the following two results can be seen as variants of Theorem \ref{thm:escape_prob_max_pt} above. To wit, Theorem \ref{thm:escape_prob_max_pt} says that the zero level set and the max level set  are ``distant''. Here we extend the result to arbitrary level sets at levels $\mu$ and $\nu$. The Dirichlet version of the following theorem has already been proved in \cite{GM2021}, we start by proving the Neumann version here. 
\begin{convention}
    In what follows, for the Neumann boundary condition and the corresponding reflected Brownian motion (RBM henceforth), we will consider domains $\Omega$ which will have ``sufficiently regular boundary''. We are not sure what is the most general boundary on which one can construct RBM such that variants of the  Feynman-Kac still hold, but it is known that Lipschitz regularity of the boundary suffices, see \cite{BS91}.
\end{convention}

\begin{proposition}\label{thm:Dirichlet_level_set}
Consider a domain $\Omega$ (with sufficiently regular boundary) sitting inside a closed smooth manifold $M$ of dimension $n$, and let $\varphi$ be a Neumann eigenfunction of $\Omega$ corresponding to the eigenvalue $\mu$. 
Let $\tilde{\Omega}$ be a nodal domain of $\varphi$. Normalise $\| \varphi\|_{L^\infty(\tilde{\Omega})} = 1$ and fix two numbers $0 < \nu < \eta \leq 1$. Now,  choose a point $x \in L_\nu$. 
Then, for any positive number $\tau$, we have that 
\beq\label{ineq:upp_bdd_level_hit}
\psi_{S^c_\eta}(\tau, x) \leq \frac{\nu}{\eta}\left( \frac{1 - e^{-\mu\tau}}{\mu}\right),
\eeq
where $S^c_\eta$ denotes the $\eta$-superlevel set of $\varphi$ inside $\tilde{\Omega}$. 
\end{proposition}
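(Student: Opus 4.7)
The plan is to combine the Feynman-Kac representation of $\varphi$ on the nodal domain $\tilde{\Omega}$ with the pointwise bound $\mathbbm{1}_{S^c_\eta}\leq \varphi/\eta$, and then integrate in time. This is the Neumann analogue of the Dirichlet calculation already used in the paper to prove Theorem \ref{thm:Dirichlet_level_set_converse}, and the exponential factor $e^{-\mu s}$ under integration is precisely what produces the right-hand side $\frac{1-e^{-\mu\tau}}{\mu}$.

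First, I would set up the correct stochastic process. Since $\tilde{\Omega}$ is a nodal domain of the Neumann eigenfunction $\varphi$, the restriction $\varphi\big|_{\tilde{\Omega}}$ is (after a sign choice) the ground state of the mixed eigenvalue problem on $\tilde{\Omega}$ with Dirichlet condition on $\pa\tilde{\Omega}\cap \mathcal{N}_\varphi$ and Neumann condition on $\pa\tilde{\Omega}\cap \pa\Omega$. Let $\omega(s)$ denote RBM in $\Omega$ started at $x$, reflecting off $\pa\Omega$ and killed upon first hitting the nodal set $\mathcal{N}_\varphi$, and let $\phi_{\tilde{\Omega}}(\omega,s)$ be the indicator that $\omega$ has not been killed by time $s$. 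The mixed Feynman-Kac identity then reads
$$e^{-\mu s}\varphi(x) \;=\; \EE_x\!\left[\varphi(\omega(s))\,\phi_{\tilde{\Omega}}(\omega,s)\right], \qquad s>0.$$

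Next, I would exploit that $\varphi\geq \eta$ on $S^c_\eta$ and $\varphi\geq 0$ on $\tilde{\Omega}$, which gives the elementary pointwise inequality $\mathbbm{1}_{S^c_\eta}(z)\leq \varphi(z)/\eta$ for $z\in\tilde{\Omega}$. Inserting this into the Feynman-Kac formula and using $\varphi(x)=\nu$ (since $x\in L_\nu$) yields
$$\EE_x\!\left[\mathbbm{1}_{S^c_\eta}(\omega(s))\,\phi_{\tilde{\Omega}}(\omega,s)\right] \;\leq\; \frac{1}{\eta}\,\EE_x\!\left[\varphi(\omega(s))\,\phi_{\tilde{\Omega}}(\omega,s)\right] \;=\; \frac{\nu}{\eta}\,e^{-\mu s}.$$
Integrating in $s$ over $[0,\tau]$ and applying Fubini on the left then gives
$$\EE_x\!\left[\int_0^\tau \mathbbm{1}_{S^c_\eta}(\omega(s))\,\phi_{\tilde{\Omega}}(\omega,s)\,ds\right] \;\leq\; \frac{\nu}{\eta}\cdot\frac{1-e^{-\mu\tau}}{\mu},$$
which is the claimed bound, interpreting $\psi_{S^c_\eta}(\tau,x)$ as the occupation-time/hitting quantity it controls.

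The main technical hurdle is justifying the mixed Feynman-Kac identity rigorously. This requires enough regularity of $\pa\Omega$ to construct the RBM (Lipschitz is enough, by the Convention preceding the statement and \cite{BS91}), together with the observation that the nodal set $\mathcal{N}_\varphi$ is a smooth $(n-1)$-hypersurface away from the singular part $\Sigma_\varphi$, which is countably $(n-2)$-rectifiable and polar for Brownian motion in $n\geq 2$, so that killing is well-defined. Once the semigroup identity $P_s \varphi = e^{-\mu s}\varphi$ is in place for the killed-reflected heat semigroup, the rest of the argument is elementary and mirrors the Dirichlet version.
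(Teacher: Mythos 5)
Your argument is essentially identical to the paper's: the same mixed Feynman--Kac identity on the nodal domain (RBM reflected at $\pa\Omega$, killed at the nodal set), the same pointwise bound $\mathbbm{1}_{S^c_\eta}\leq \varphi/\eta$ combined with $\varphi(x)=\nu$, and the same integration in $t$ over $[0,\tau]$ to produce $\frac{1-e^{-\mu\tau}}{\mu}$. You even flag, more explicitly than the paper does, that the computation actually bounds the expected occupation time of $S^c_\eta$ rather than the hitting probability $\psi_{S^c_\eta}(\tau,x)$ itself; the paper's last line simply asserts that this occupation time dominates $\psi_{S^c_\eta}(\tau,x)$ without further comment.
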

In plain language, the upper bound (\ref{ineq:upp_bdd_level_hit}) says that a particle starting at the $\nu$-level takes some time to hit the $\eta$-level. A precise interpretation of this result does not seem entirely straightforward. Heuristically, to us it seems to imply a sort of a dichotomy: if the level sets $L_\eta$ and $L_\nu$ are close to each other, then $L_\nu$ must be close to the boundary $\pa\Omega$, so that a lot of Brownian particles strike $\pa \Omega \cap \pa\tilde{\Omega}$ first, get reflected from there and then strike $L_\eta$, or get killed upon impact on $\pa \tilde{\Omega} \setminus \pa\Omega$ before reaching $L_\eta$. This makes the Brownian particle travel a longer distance or get killed before impact, thereby lowering the hitting probability.

 \begin{figure}[ht]
\centering
\includegraphics[scale=0.16]{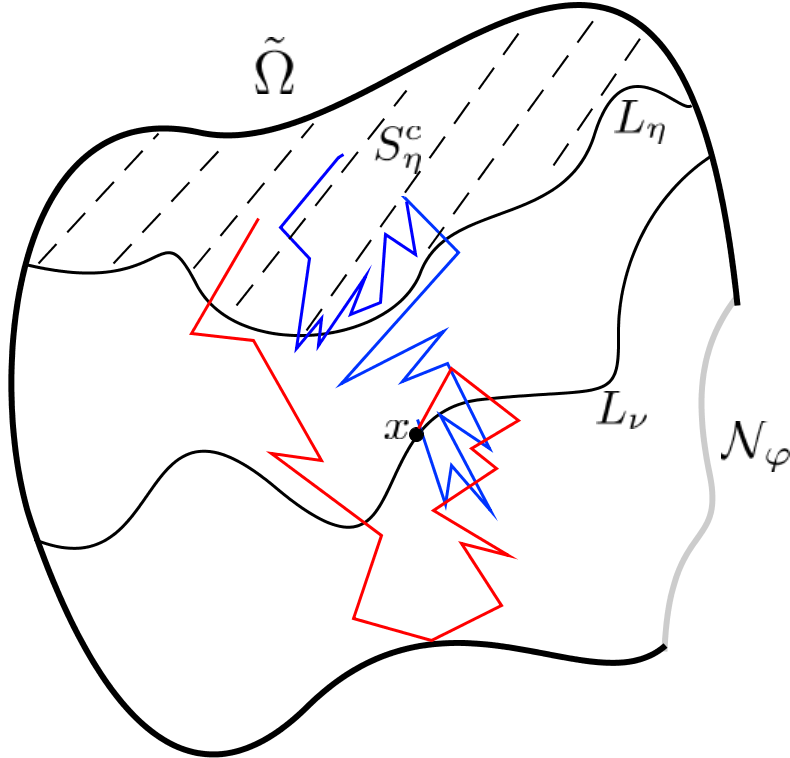}
\caption{Superlevel sets, direct (blue) and reflected (red) Brownian particles}
\end{figure}

\begin{proof}
    The proof is simple. 
    Via the Feynman-Kac formula for the mixed boundary condition, we have that
    \begin{align*}
        e^{-\mu t}\varphi(x) & = \int_{\omega(t) \in S_\eta^c} \varphi  (\omega (t)) 
        \; d\omega  + \int_{\omega(t) \in S_\eta} \varphi  (\omega (t)) 
        \; d\omega\\
        & \geq \eta \int_{\omega(t) \in S_\eta^c}  
        \; d\omega,
    \end{align*}
    where as usual, $d\omega$ means the Wiener measure element on the space of Brownian paths inside $\tilde{\Omega}$ which are reflected upon impact at the ``outside'' boundary and absorbed upon impact at the nodal set. Integrating the above inequality, we get that 
\begin{align*}
\nu \left( \frac{1 - e^{-\mu\tau}}{\mu}\right) & \geq \eta \int_{t = 0}^\tau \int_{\omega(t) \in S_\eta^c}  
\; d\omega \\
& \geq \eta\; \psi_{S^c_\eta} (\tau, x).
\end{align*}
\end{proof}

We now turn 
to the converse result of Proposition \ref{thm:Dirichlet_level_set}. 
Note that we have stated Theorem \ref{thm:Dirichlet_level_set_converse} for the Dirichlet boundary condition. However, the proof for the corresponding statement for the Neumann case is similar. 
To achieve this, we now bring in a tool called optional stopping time (see Chapter 3 of \cite{Ba2011}). This takes advantage of the martingale properties of Brownian processes, and we believe that such ideas might find further applications in related problems in future. 

\begin{proof}[Proof of Theorem \ref{thm:Dirichlet_level_set_converse}] 
Let $x \in \Omega$ lie on a 
$\mu$-level set and consider another level set $L_\eta$. Without loss of generality, let us assume that $\mu > \eta$. 
Let $u(t, x) = e^{\lambda t}\varphi_\lambda(x)$ be the solution of the backward heat equation with 
Dirichlet boundary conditions on 
$\Omega$, as described above. 
Recall that the solution of the backward heat equation 
gives rise to a non-negative martingale $u(t, \omega(t))$, and the time $T$ when a Brownian particle emanating from $x$ hits $L_\eta$ is a stopping time, and so is $t^* := \min\{t, T\}$, where $t > 0$ is some real number. Then by Doob's optional stopping theorem, we have that 
\begin{align*}
\varphi_\lambda(x) & = \Exp_x\left( e^{\lambda.0}\varphi_\lambda(x)\right) = \Exp\left( e^{\lambda t^*}\varphi_\lambda(\omega(t^*))\right)\\
& = \Exp\left( e^{\lambda T}\varphi_\lambda(\omega(T))\chi_{T \leq t}\right) + \Exp\left( e^{\lambda t}\varphi_\lambda(\omega(t))\chi_{T \geq t}\right)\\
& \geq \Exp\left( e^{\lambda T}\varphi_\lambda(\omega(T))\chi_{T \leq t}\right).
\end{align*} 
Letting $t \nearrow \infty$, it is clear that 
\begin{align*}
    \varphi_\lambda(x) & \geq \eta \|\varphi_\lambda\|_{L^\infty(\Omega)} \Exp\left( e^{\lambda T}\right),
\end{align*}
giving finally that
\beq
\frac{\mu}{\eta} \geq \Exp\left( e^{\lambda T}\right).
\eeq
Now recall Markov's inequality which states that for a nonnegative random variable $X$ and a positive number $a$, 
$$
\Prob (X \geq a) \leq \frac{\Exp(X)}{a}.
$$
This gives us that 
\begin{align*}
    \Exp\left( e^{\lambda T}\right) & \geq e^a \Prob \left( e^{\lambda T} \geq e^a\right)\\
    & = e^a \Prob \left( T \geq a/\lambda\right).
\end{align*}
Choosing $a = t_0$ (a constant) such that $t_0> -\ln\left(\frac{\eta}{\mu}\right)$, we see finally that 
$$
\frac{\mu}{\eta} \geq e^{t_0}\Prob\left( T \geq \frac{t_0}{\lambda}\right),
$$

which implies that 
\begin{align*}
    1-e^{-t_0}\frac{\mu}{\eta}\leq \Prob\left(T<\frac{t_0}{\lambda} \right) \leq \Theta_n\left(\frac{\dist(L_\mu, L_\eta)^2}{t_0/\lambda} \right).
\end{align*}

On calculation we have that 
\begin{equation}\label{ineq: level_set_dist_est}
    \dist (L_\mu, L_\eta) \leq \left( \frac{t_0}{\lambda} \Theta_n^{-1}\left(1 - e^{-t_0}\frac{\mu}{\eta}\right)\right)^{1/2} \lesssim_{\mu/\eta} \lambda^{-1/2},
\end{equation}
which gives our claim.
\end{proof}

We note here that if $\eta/\mu$ 
is close to $1$, then choosing $t_0$ appropriately and using (\ref{ineq:theta_large}), we have a more precise description of the constant mentioned in (\ref{ineq: level_set_dist_est}), viz
\begin{equation*}
    \dist(L_\mu, L_\eta)\leq \left(\frac{t_0}{\lambda}\left\{-2\ln{\left[\frac{\pi^{n/2}}{2^{3n/2}}\left(1-e^{-t_0}\frac{\mu}{\eta}\right)\right]} \right\} \right)^{1/2}.
\end{equation*}

\subsection{Proof of Theorem \ref{thm:hot_spot}}
Let $u$ denote the first non-trivial Neumann eigenfunction of $\Omega \subset \RR^n$. 
 
\begin{proof}[Proof of Theorem \ref{thm:hot_spot}]
    Observe that $v(t, x) := e^{t\mu_2}u(x)$ is a solution to the backward heat equation, hence $e^{t\mu_2}u(\omega(t))$ is a martingale. Here $\omega(t)$ is a Brownian particle in 
    $S$ which is reflected on $\pa\Omega \setminus L_\eta$, and stopped on impact at $L_\eta$, which we call $L$ for the ease of notation. Now, we start a Brownian motion at a point $y \in S$. 
    Let $T_L$ denote the first impact time of this particle on $L$, 
    which is a stopping time. 
    This implies that so is $\tau = \min\{ t, T_L\}$. By the optional stopping theorem, we have that 
    \begin{align*}
u(y) & = \Exp_y\left( e^{0.\mu_2}u(y)\right) = \Exp\left( e^{\mu_2 \tau}u(\omega(\tau))\right)\\
& = \Exp\left( e^{\mu_2 T_L}u(\omega(T_L))\chi_{T_L \leq t}\right) + \Exp\left( e^{\mu_2 t}u(\omega(t))\chi_{T_L \geq t}\right)\\
& \geq \Exp\left( e^{\mu_2 T_L}u(\omega(T_L))\chi_{T_L \leq t}\right).
\end{align*} 
Letting $t \nearrow \infty$, it is clear that 
\begin{align*}
    u(y) & \geq \eta\|u\|_{L^\infty(\overline{\Omega})} \Exp\left( e^{\mu_2 T_L}\right),
\end{align*}
which gives us
\beq
\frac{1}{ \eta} \geq \Exp\left( e^{\mu_2 T_L}\right).
\eeq

Now recall Markov's inequality which states that for a nonnegative random variable $X$ and a positive number $a$, 
$$
\Prob (X \geq a) \leq \frac{\Exp(X)}{a}.
$$
This gives us, 
\begin{align*}
    \Exp\left( e^{\mu_2 T_L}\right) & \geq e^a \Prob \left( e^{\mu_2 T_L} \geq e^a\right)\\
    & = e^a \Prob \left( T_L \geq a/\mu_2\right) \\
    & \geq e^a \Prob \left(T_L \geq \frac{a \diam(\Omega)^2}{\tilde{c}}\right),
\end{align*}
where $\tilde{c}$ is a constant satisfying $\mu_2 \geq \tilde{c}/\diam(\Omega)^2$ (we know that $\tilde{c} \leq c_n$ always works). Calling $d = \diam(\Omega)$,  let us denote $\displaystyle\kappa d=  \dist(y, L)$ for some $\kappa$. 
It is clear that 
\begin{align*}
    \Prob\left( T_L < 
    \frac{ad^2}{\tilde{c}} \right) \leq \Theta_n\left( \frac{\dist(y, L)}{ad^2/\tilde{c}} \right) = \Theta_n\left( \frac{\kappa^2 \tilde{c}}{a} \right),
\end{align*}
which implies on calculation that 
\begin{align*}
    1/\eta \geq \Exp \left(e^{\mu_2 T_L}\right) & \geq e^a\left( 1 - \Theta_n\left(\frac{\kappa^2 \tilde{c}}{a}\right) \right). 
\end{align*}

This implies,
\begin{equation*}
     \left(1-\frac{e^{-a}}{\eta}\right) \leq \Theta_n\left(\frac{\kappa^2 \tilde{c}}{a}\right).
\end{equation*}

Choosing $a>-\ln{\eta}$, we have that 
\begin{equation}\label{eq:kappa_est}
  \kappa \leq \inf_{a>-\ln{\eta}} \tilde{c}^{-1/2}\left( a\Theta_n^{-1}\left(1-\frac{e^{-a}}{\eta}\right)\right)^{1/2} = \inf_{b>0} \tilde{c}^{-1/2}\left( -b\ln{(\eta(1-\Theta_n(b)))}\right)^{1/2} 
\end{equation}
\end{proof}

\begin{remark}
    Note that, as $b\to \infty$ or $b\to 0$, the above expression on the right of (\ref{eq:kappa_est}) goes to $\infty$. So the infimum is attained for some $b$ away from the two ``endpoints''. Also, it is clear that in (\ref{eq:kappa_est}) above, $a$ is small iff $b$ is large.  Hence, one way to get a (possibly quite sub optimal) bound of $\kappa$ is the following: in the regime of large $b$ ($b>n$), we bring in the estimate (\ref{ineq:theta_large}) to see that
    
    \begin{align*}
      \tilde{c}^{-1/2}\left( -b\ln{(\eta(1-\Theta_n(b)))}\right)^{1/2}   &\leq c_n^{-1/2}b^{1/2}\left(-\ln\left\{\eta\left(1-\frac{2^{3n/2}}{\pi^{n/2}}e^{-b/2}\right)\right\}\right)^{1/2}.
    \end{align*}

\end{remark}

\section{Neumann eigenvalue bounds, level sets and hot spots} \label{sec:Neumann}
We begin by listing some well-known (or at any rate easily provable) facts and observations:
\begin{fact}
    Any connected component of the nodal set for the first non-trivial Neumann eigenfunction $u$ has to cut the boundary at two distinct points. It cannot be an embedded circle or cut the boundary at precisely one point. 
\end{fact}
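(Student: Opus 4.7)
The plan is a contradiction argument combining Friedlander's inequality $\mu_2 \leq \lambda_1(\Omega)$ with strict monotonicity of Dirichlet eigenvalues under proper inclusion. Given a forbidden component $\gamma$, the strategy is to extract a proper open subregion $D \subsetneq \Omega$ with $|\Omega\setminus D| > 0$ whose boundary (off a measure-zero set) lies on $\gamma$. On such a $D$, $u|_D$ satisfies $-\Delta u = \mu_2 u$ with $u = 0$ on $\partial D$ almost everywhere, so $\mu_2$ lies in the Dirichlet spectrum of $D$ and $\mu_2 \geq \lambda_1(D)$. Strict monotonicity then gives $\lambda_1(D) > \lambda_1(\Omega)$, while Friedlander's inequality gives $\mu_2 \leq \lambda_1(\Omega)$, yielding the contradictory chain $\mu_2 < \mu_2$.

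First I would extract $D$ from the topology of $\gamma$. If $\gamma$ is an embedded closed curve in the interior of $\Omega$, the Jordan curve theorem directly yields $D$ as the bounded region it encloses intersected with $\Omega$. If $\gamma$ meets $\partial \Omega$ at exactly one point $p$, I would invoke the Hardt-Simon structure of the nodal set in dimension two: the smooth stratum is an embedded $1$-manifold, isolated singular points have at least four arcs meeting, and no arc can terminate in the interior. Treating $\gamma$ as a planar graph with interior singular vertices of degree $\geq 4$ and the single external vertex $p$, an Euler-type degree count $2|E| = \sum_v \deg(v)$ rules out $\gamma$ being a tree, so $\gamma \cup \{p\}$ must contain a Jordan loop enclosing the required $D$.

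The main obstacle is this topological extraction, particularly in the single-boundary-point case where $\gamma$ may branch at several interior singularities; once $D$ is in hand, the eigenvalue comparison is routine. A side subtlety is that the single contact point $p$ is a measure-zero piece of $\partial D$, so $u|_D$ genuinely lies in $H^1_0(D)$ and the Dirichlet spectrum argument proceeds unchanged. In particular, the proof relies fundamentally on Friedlander's inequality; without it, even the exclusion of the embedded-circle case would not follow from the variational principle alone.
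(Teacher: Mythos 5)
Your proof is correct and is exactly the argument the paper has in mind: the paper offers no details beyond the remark that the statement ``follows from well-known inequalities between Dirichlet and Neumann eigenvalues (see \cite{Fr91})'', i.e.\ Friedlander's bound $\mu_2\leq\lambda_1(\Omega)$ played off against domain monotonicity on a region enclosed by the offending nodal component. Your write-up simply supplies the topological extraction of the enclosed subdomain (Jordan curve / local structure of the nodal set) that the paper leaves implicit, so there is nothing to add.
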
   
The proof is simple and follows from well-known inequalities between Dirichlet and Neumann eigenvalues (for instance, see \cite{Fr91}). 

\begin{obsv} (Trivial) 
    The maximum on the boundary cannot be strictly greater than the supremum on the interior, just via intermediate value property. However, it might happen that the supremum on the interior is not attained. 
\end{obsv}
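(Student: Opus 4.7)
The plan is very short, since the observation is essentially a continuity statement. First I would fix a Neumann eigenfunction $u$ on $\Omega$; standard elliptic regularity (or the even simpler fact that Neumann eigenfunctions on a domain with regular boundary extend continuously to $\overline{\Omega}$) guarantees that $u \in C(\overline{\Omega})$. Then for any boundary point $x \in \partial\Omega$, picking a sequence $y_k \in \Omega$ with $y_k \to x$, continuity gives
\begin{equation*}
u(x) = \lim_{k \to \infty} u(y_k) \leq \sup_{y \in \Omega} u(y),
\end{equation*}
and taking supremum over $x \in \partial\Omega$ yields $\|u\|_{L^\infty(\partial\Omega)} \leq \sup_\Omega u$. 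The phrase ``intermediate value property'' in the statement is really just encoding this continuity/approximation from the interior, so no separate argument is needed.

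For the second half of the statement, the plan is to exhibit (or at least point to) a simple example showing that the interior supremum need not be attained inside the open set. The cleanest instance is the rectangle $\Omega = (0,a) \times (0,b)$, on which the first non-trivial Neumann eigenfunction (for $a$ large enough) is $u(x,y) = \cos(\pi x/a)$; here $\sup_\Omega u = 1$ is realised only along the boundary segment $\{x=0\}$, so the supremum over the open interior is not attained. Since nothing in the argument is delicate, there is really no main obstacle, merely the need to make the trivial continuity claim precise and to record an illustrative example justifying why the phrasing must involve supremum rather than maximum on the interior.
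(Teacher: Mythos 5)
Your continuity argument is exactly what the paper's phrase ``intermediate value property'' is gesturing at (the paper offers no further proof, labelling the observation trivial), so the approaches coincide. The rectangle example $u(x,y)=\cos(\pi x/a)$ is a correct and welcome addition illustrating the second sentence, since there the interior supremum equals $1$ but is attained only on the boundary face $\{x=0\}$.
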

    
\begin{obsv} 
    However, if the supremum/infimum is  attained at some interior point, there are two options: the maximum level set is a family of  curves, or a collection of isolated points. Because of the maximum principle, the level set of the maximum cannot be an embedded circle. Now, suppose that we are in dimension $n = 2$, and  the hot spots curve separates the domain into two components. Then on each component the eigenfunction satisfies the Neumann boundary condition, which means that the first nodal set shall intersect both components. Since the first nodal set can have only one component, it must cut through the hot spot curve, giving a contradiction. 
\end{obsv}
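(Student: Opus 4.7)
The plan rests on three ingredients: the real analyticity of the eigenfunction $u$ on the interior of $\Omega$, the strong maximum principle together with Hopf's boundary point lemma applied to $-\Delta u = \mu u$, and the Courant nodal domain theorem which, for the first non-trivial Neumann eigenfunction, yields exactly two nodal domains. Combined with the Fact stated earlier, the nodal set is a single connected arc joining two distinct points of $\partial \Omega$. Let $m := \|u\|_{L^\infty(\Omega)}$ and denote the maximum level set by $\Lambda := \{x \in \Omega : u(x) = m\}$.

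For the dichotomy between curves and isolated points on $\Lambda$, I would use that $\Lambda$ consists entirely of critical points of $u$, since $u$ attains its interior maximum there. Analyticity of $u$ implies that $\Lambda$ is a real analytic variety, and in dimension two such a variety is locally either a smooth one-dimensional submanifold or a discrete collection of isolated points, with only a countable singular stratum where distinct branches meet.

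To exclude an embedded circle $C \subset \Lambda$ lying in the interior of $\Omega$, consider the bounded region $D$ that $C$ encloses (Jordan curve theorem). After localising to a positive nodal subdomain of $u$ containing a neighbourhood of $C$, we have $u > 0$ on $D$, so $v := m - u$ satisfies $v \ge 0$ on $D$, $v = 0$ on $\partial D = C$, and $\Delta v = \mu u > 0$. Strict subharmonicity prevents $v$ from attaining its maximum in the interior, so $\max_{\overline D} v = \max_C v = 0$. Hence $v \equiv 0$, i.e.\ $u \equiv m$ on $D$. Analytic continuation then forces $u \equiv m$ throughout $\Omega$, contradicting non-triviality of the eigenfunction.

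For the separation claim in dimension two, suppose $\Gamma \subset \Lambda$ is a curve dividing $\Omega$ into two components $U_1, U_2$. Since $u = m > 0$ on $\Gamma$, the nodal set cannot cross $\Gamma$, so the nodal arc lies entirely within one component, say $U_1$; consequently $u$ has constant sign on $U_2$, which we take positive. Repeat the construction $v = m - u$ on $U_2$: we have $v \ge 0$, $\Delta v > 0$, $v = 0$ on $\Gamma \subset \partial U_2$, and $\partial_\eta v = 0$ on $\partial \Omega \cap \partial U_2$ by the Neumann condition on $u$. Strict subharmonicity rules out an interior maximum; Hopf's lemma rules out a maximum at a regular boundary point of the Neumann piece (else $\partial_\eta v > 0$ there). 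The maximum therefore lies on $\Gamma$ where $v = 0$, forcing $v \equiv 0$ and hence $u \equiv m$ on $U_2$, a contradiction. The principal technical obstacle will be the regularity of $\Gamma$: being a level set at a critical value it need not be globally smooth, so one may have to approximate by smooth sub-curves or restrict to pieces of $\Gamma$ away from the singular points where branches of $\Lambda$ meet, with analogous care at the Neumann boundary $\partial \Omega$ where Hopf is applied.
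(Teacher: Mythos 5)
Your argument for the separation claim is sound but follows a genuinely different route from the paper's. The paper observes that on the hot-spot curve $\Gamma$ the gradient of $u$ vanishes, so the restriction of $u$ to each component $U_i$ is a Neumann eigenfunction of $U_i$ with the positive eigenvalue $\mu_2$; it is therefore orthogonal to constants on $U_i$ and must change sign there, forcing the (connected) nodal set to meet both components and hence to cross $\Gamma$, which is impossible since $u=m>0$ on $\Gamma$. You instead run a strict-subharmonicity/Hopf argument for $v=m-u$ on the component free of the nodal set. Both proofs need the same topological input (connectedness of the first nodal set, and that it cannot cross $\Gamma$); the paper's version buys you freedom from boundary regularity (it is a one-line integration by parts, $\int_{U_i}u=-\mu_2^{-1}\int_{\partial U_i}\partial_\eta u=0$), whereas yours needs an interior ball condition at the Neumann portion of $\partial U_2$ and some care at the corners where $\Gamma$ meets $\partial\Omega$ — caveats you correctly flag.

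There is, however, a genuine gap in your exclusion of an embedded circle: the assertion that $u>0$ on the Jordan domain $D$ bounded by $C$ does not follow from ``localising to a positive nodal subdomain containing a neighbourhood of $C$.'' The positive nodal domain contains a collar of $C$, but a priori the nodal set could have a component lying entirely inside $D$, in which case $u$ changes sign in $D$, $\Delta v=\mu u$ is not positive throughout $D$, and your strict subharmonicity fails. To close this you must invoke the Fact recorded earlier in the section: every connected component of the nodal set of the second Neumann eigenfunction reaches $\partial\Omega$ at two distinct points, so no component can be trapped inside $D$ when $D\subset\Omega$; only then is $u>0$ on $D$ and your computation $\Delta(m-u)=\mu u>0$ legitimate. (Alternatively, apply the maximum principle directly: if $u\not\equiv m$ on $D$ then $m-u$ attains a positive interior maximum, where $\Delta(m-u)\le 0$ forces $u\le 0$, again driving the nodal set into $D$ and contradicting the same Fact.) You should also note that for a non--simply-connected $\Omega$ the enclosed region $D$ need not lie in $\Omega$, so the Jordan-domain step requires either simple connectivity or a separate treatment of circles encircling a hole; the paper is silent on this point as well.
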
  

Oftentimes, we will be interested in the situation that the starting point $x$ is inside $\Omega$, and the Brownian particle till time $t$ does not impact on the boundary. This is the same as $1 - \psi_{X\setminus \Omega}(t, x)$, where $X = \RR^n$ or $M$, as the case may be. 
\begin{notation}
    We denote the above mentioned survival probability by $q^\Omega_t(x)$. When there is no scope of confusion, we will drop the superscript $\Omega$, and just write $q_t(x)$. It is clear that $q_t(x) = 1 - p_t(x)$. 
\end{notation}
\subsubsection{Lower bound from Neumann Poincar\'{e}}\label{subsec:Neumann_Poin}
From Poincar\'{e}'s original proof \cite{P1890}, we know that 
$$
\int_\Omega v^2 \leq C \int_\Omega |\nabla v|^2,
$$
where $v \in \widetilde{H^1(\Omega)} := \{ u \in H^1(\Omega) : \int_\Omega u = 0\}$ and 
$$
C = |\diam(\Omega)|^2\frac{2^{n - 1} - 1}{n - 1}.
$$
This implies that 
\beq\label{ineq:neumann_est_diam}
\mu_2 \geq \frac{c_n}{\diam(\Omega)^2},
\eeq
where 
$$
c_n = \frac{n - 1}{2^{n - 1} - 1}.
$$

\subsubsection{Upper bound on $\mu_2$ due to Szeg\"{o}-Weinberger} The bound is given by 
\beq\label{ineq:Szeg_Wein}
\mu_2 \leq 4\pi^2\left( \frac{1}{\omega_n |\Omega|} \right)^{2/n}.
\eeq

\subsection{A recent heat kernel bound} 

Here we record some specific heat kernel bounds which would be suitable for later use. Let $p_\Omega(t, x, y)$ denote the Dirichlet heat kernel for the domain $\Omega$. Proposition 3.6 of \cite{BMW20} gives the following:
\begin{proposition} For all $\varepsilon \in (0, 1]$, we have that
\beq\label{ineq:heat_BMW}
    \int_\Omega p_\Omega(t, x, y) \; dy \leq e^{n/4}\frac{\sqrt{2}}{(8n)^{n/4}} \sqrt{ \frac{\Gamma(n)}{\Gamma(n/2)}  } \left( 1 +\frac{1}{\sqrt{\varepsilon}}\right)^{n/2}\; e^{- (1- \varepsilon)\lambda_1 t},
\eeq
for $t \geq 0$. 
\end{proposition}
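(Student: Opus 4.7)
The plan is to start from the spectral expansion
\[
q_t^\Omega(x) := \int_\Omega p_\Omega(t,x,y)\,dy = \sum_{k\ge 1} c_k\, e^{-\lambda_k t}\varphi_k(x), \qquad c_k := \int_\Omega \varphi_k,
\]
and perform a Cauchy--Schwarz splitting that separates the dominant exponential decay from an on-diagonal heat kernel estimate. Writing the $k$-th term as $c_k e^{-(1-\varepsilon)\lambda_k t} \cdot e^{-\varepsilon\lambda_k t}\varphi_k(x)$ and applying Cauchy--Schwarz,
\[
q_t^\Omega(x)^2 \;\le\; \Bigl(\sum_k c_k^2 e^{-2(1-\varepsilon)\lambda_k t}\Bigr)\Bigl(\sum_k e^{-2\varepsilon\lambda_k t}\varphi_k(x)^2\Bigr).
\]
Parseval's identity $\sum c_k^2 = \|1\|_{L^2(\Omega)}^2 = |\Omega|$ combined with $\lambda_k \ge \lambda_1$ bounds the first factor by $|\Omega|\,e^{-2(1-\varepsilon)\lambda_1 t}$. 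The second factor is exactly $p_\Omega(2\varepsilon t, x, x)$, which is dominated by the free Gaussian heat kernel, giving $p_\Omega(2\varepsilon t,x,x)\le (8\pi\varepsilon t)^{-n/2}$. Taking square roots I obtain the preliminary estimate
\[
q_t^\Omega(x) \;\le\; |\Omega|^{1/2}\,(8\pi\varepsilon t)^{-n/4}\,e^{-(1-\varepsilon)\lambda_1 t}.
\]

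The next step is to replace the $\Omega$- and $t$-dependent prefactor by a purely dimensional constant. Here I would invoke Faber--Krahn in its sharp form $\lambda_1(\Omega)\,|\Omega|^{2/n} \ge \omega_n^{2/n} j_{n/2-1,1}^2$ to trade $|\Omega|^{1/2}$ for a dimensional multiple of $\lambda_1^{-n/4}$. After this substitution, the prefactor reads $C_n(\varepsilon\lambda_1 t)^{-n/4} e^{-(1-\varepsilon)\lambda_1 t}$, with an explicit $C_n$ whose sharp shape (involving $\omega_n$ and the Bessel zero) produces precisely a $\Gamma(n)/\Gamma(n/2)$ quotient after one evaluates $\omega_n$ in terms of Gamma functions. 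To handle all $t\ge 0$ uniformly, one optimizes the function $s\mapsto s^{-n/4}e^{-(1-\varepsilon)s}$ in $s=\lambda_1 t$: the supremum blows up as $s\to 0$, so one uses the trivial bound $q_t^\Omega(x)\le 1$ in the short-time regime and the displayed estimate in the long-time regime, then combines them into a single inequality. The extra factor $(1+1/\sqrt{\varepsilon})^{n/2}$ in the stated constant is exactly what one gets by patching the two regimes at the break-point $s_*$ where $s_*^{-n/4}\sim e^{-(1-\varepsilon)s_*}$, since the cross-over involves a factor of $\varepsilon^{-1/2}$ raised to dimensional power $n/2$.

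The main obstacle is pinning down the precise numerical constant $e^{n/4}\sqrt{2}(8n)^{-n/4}$: this requires careful tracking of the Faber--Krahn constant, of the extremizer of the elementary optimization in $s$, and of the sharp ultracontractive bound rather than its crude Gaussian majorization. A cleaner route, which I suspect is closer to that of \cite{BMW20}, is to bypass the explicit optimization by using the Li--Yau/Davies ultracontractivity estimate $\|e^{t\Delta_\Omega}\|_{L^2\to L^\infty} \le C_n(\varepsilon)\,\lambda_1^{n/4}\,e^{-(1-\varepsilon)\lambda_1 t}$ (proved by splitting time as $t = (1-\varepsilon)t + \varepsilon t$, applying the $L^2\to L^2$ spectral bound $e^{-(1-\varepsilon)\lambda_1 t}$ on the first factor and the $L^2\to L^\infty$ smoothing $(8\pi\varepsilon t)^{-n/4}$ on the second) and then combining with $\|1\|_{L^2(\Omega)} = |\Omega|^{1/2} \le c_n \lambda_1^{-n/4}$ from Faber--Krahn. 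The delicate part is purely arithmetic: matching constants so that the final prefactor collapses to the stated $\Gamma$-quotient times $(1+1/\sqrt\varepsilon)^{n/2}$; I would be content, at the level of a proof sketch, with the qualitative bound and defer the sharp constant to \cite{BMW20}.
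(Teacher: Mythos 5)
First, a point of orientation: the paper does not prove this statement at all --- it is quoted verbatim as Proposition 3.6 of \cite{BMW20}, with no argument supplied. So there is no in-paper proof to compare against; your proposal is an attempt to reprove a cited result.

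Your first step is fine: the Cauchy--Schwarz splitting, Parseval, and the on-diagonal bound $p_\Omega(2\varepsilon t,x,x)\le (8\pi\varepsilon t)^{-n/2}$ correctly yield
$\int_\Omega p_\Omega(t,x,y)\,dy \le |\Omega|^{1/2}(8\pi\varepsilon t)^{-n/4}e^{-(1-\varepsilon)\lambda_1 t}$.
The gap is in the next step: Faber--Krahn goes the \emph{wrong way} for what you need. It states $\lambda_1(\Omega)\ge \lambda_1(\Omega^*)$ for the ball $\Omega^*$ of the same volume, i.e. $|\Omega|\ge \omega_n\, j_{n/2-1,1}^{\,n}\,\lambda_1^{-n/2}$; this is a \emph{lower} bound on $|\Omega|$ in terms of $\lambda_1$, whereas you need the upper bound $|\Omega|^{1/2}\lesssim_n\lambda_1^{-n/4}$ to eliminate the volume. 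That upper bound is false: for the rectangle $(0,\epsilon)\times(0,L)$ one has $\lambda_1\approx\pi^2/\epsilon^2$ fixed while $|\Omega|^{1/2}\lambda_1^{n/4}\approx \pi\sqrt{L/\epsilon}\to\infty$ as $L\to\infty$. So your argument cannot remove the $|\Omega|^{1/2}$ prefactor, and the stated bound has no volume dependence at all. The repair is to replace the lossy Cauchy--Schwarz against the constant weight (which is what produces $\|1\|_{L^2(\Omega)}=|\Omega|^{1/2}$) by a Cauchy--Schwarz against an integrable weight, e.g. writing $\int_\Omega p_\Omega = \int_\Omega (p_\Omega e^{a|x-y|/2})\,e^{-a|x-y|/2}$ and using $\int_{\RR^n}e^{-a|x-y|}\,dy = 2\pi^{n/2}\Gamma(n)\Gamma(n/2)^{-1}a^{-n}$ together with the Gaussian domination $p_\Omega\le p_{\RR^n}$ to absorb the exponential weight; this is visibly where the factor $\sqrt{2}\,\sqrt{\Gamma(n)/\Gamma(n/2)}$ in the stated constant comes from, and optimizing over $a$ and over the time split produces the remaining factors $e^{n/4}(8n)^{-n/4}$ and $(1+\varepsilon^{-1/2})^{n/2}$. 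As written, your proof establishes only a volume-dependent variant of the inequality, not the proposition itself, and ``deferring the constant to \cite{BMW20}'' is circular since the entire statement is that reference's.
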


\subsection{Some technical lemmata}\label{sec:tech_lem} Here we explicitly write out some ideas that are implicit in \cite{BW99, BB99}, which are useful for us, and we believe would be useful for future work in this direction. Consider the initial valued problem
\begin{align*}
(\pa_t - \Delta)u(t, x) & = 0, \text{    } x \in \Omega\\
\frac{\pa u(t, x)}{\pa \eta} & = 0, \text{    } x \in \pa \Omega\\
u(0, x) & = u_0(x), \text{    } x \in \Omega.
\end{align*}
We focus in particular on the situation where the initial condition $u_0(x) = \mathbbm{1}_S$ for some $S \subseteq \Omega$. Then we have that
\begin{lemma}\label{lem:u_S_Omega}
    Let $\Omega$ be a planar domain. For large $t$, the solution of the above diffusion process under $u_0(x) = \mathbbm{1}_S$ satisfies that \beq \label{ineq:Banuelos_Burdzy_est}
    \left|u(t, x) - |S|/|\Omega|\right| \leq Ce^{-\mu_2 t}
    \eeq
    for large $t$, where $C := C(|S|, |\Omega|)$. 
\end{lemma}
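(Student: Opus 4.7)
The plan is a standard spectral expansion argument using the Neumann heat semigroup. Let $\{\varphi_n\}_{n\geq 1}$ be an $L^2(\Omega)$-orthonormal Neumann eigenbasis with eigenvalues $0 = \mu_1 < \mu_2 \leq \mu_3 \leq \cdots$, so that $\varphi_1 \equiv |\Omega|^{-1/2}$. Expand $\mathbbm{1}_S = \sum_n c_n \varphi_n$ with $c_n = \int_S \varphi_n$; then $c_1 = |S|/|\Omega|^{1/2}$ and $\sum_n c_n^2 = |S|$ by Parseval. Consequently,
\[
u(t, x) = \sum_{n \geq 1} c_n e^{-\mu_n t}\varphi_n(x) = \frac{|S|}{|\Omega|} + R(t, x), \qquad R(t, x) := \sum_{n \geq 2} c_n e^{-\mu_n t}\varphi_n(x),
\]
so the task reduces to bounding $R(t, x)$ pointwise with the correct exponential decay in $t$.

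The first step is an $L^2$ estimate for the remainder. Since the Neumann heat flow preserves the total integral, $R(t, \cdot) = e^{t\Delta_N}(\mathbbm{1}_S - |S|/|\Omega|)$ is orthogonal to constants, and the spectral theorem immediately gives
\[
\|R(t, \cdot)\|_{L^2(\Omega)} \leq e^{-\mu_2 t}\,\|\mathbbm{1}_S - |S|/|\Omega|\|_{L^2(\Omega)} \leq e^{-\mu_2 t}\sqrt{|S|}.
\]
To upgrade this to a pointwise bound we invoke the smoothing of the heat semigroup: writing $R(t, x) = (e^{\tau \Delta_N} R(t - \tau, \cdot))(x)$ and applying Cauchy-Schwarz yields
\[
|R(t, x)| \leq \|h_N(\tau, x, \cdot)\|_{L^2(\Omega)}\, \|R(t-\tau, \cdot)\|_{L^2(\Omega)} = h_N(2\tau, x, x)^{1/2}\, \|R(t-\tau, \cdot)\|_{L^2(\Omega)},
\]
where $h_N$ denotes the Neumann heat kernel of $\Omega$. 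Fixing $\tau = 1$ and combining the two displays gives, for all $t \geq 1$,
\[
|R(t, x)| \leq e^{\mu_2}\, h_N(2, x, x)^{1/2}\sqrt{|S|}\, e^{-\mu_2 t};
\]
the regime $t \in [0, 1]$ can be absorbed into the final constant using the trivial bound $|R(t, x)| \leq 1 + |S|/|\Omega|$.

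The remaining issue is to fold the prefactor into a constant $C(|S|, |\Omega|)$ of the stated form. In dimension two, Nash-type ultracontractivity bounds (available for planar domains of the mild regularity implicit in our setting) supply a uniform bound $\sup_{x \in \Omega} h_N(2, x, x) \leq C(\Omega)$, while the Szeg\"{o}-Weinberger inequality (\ref{ineq:Szeg_Wein}) bounds $\mu_2$ above in terms of $|\Omega|$ alone. I expect the main technical obstacle to lie precisely here: obtaining a uniform heat-kernel ceiling depending only on gross data of $\Omega$ (measure, diameter, or a mild boundary regularity constant), rather than on finer metric features. Under the Lipschitz-boundary hypotheses in force for the Neumann problem to be well-posed this is standard, and in case a sharper dependence is needed one can import the heat-kernel machinery from \cite{BW99, BB99} to match the asserted form of $C$ precisely.
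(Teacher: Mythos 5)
Your proof is correct, and it rests on the same two pillars as the paper's argument: the Neumann eigenfunction expansion of $u(t,\cdot)$ with the constant mode split off, and the $L^2 \to L^\infty$ smoothing of the Neumann heat semigroup (equivalently, a uniform on-diagonal bound for $h_N$ at a fixed positive time). The difference is in how the tail is controlled. The paper applies ultracontractivity \emph{termwise}, deducing $\|u_j\|_{L^\infty} \leq Ce^{\mu_j}$ from $e^{\Delta}u_j = e^{-\mu_j}u_j$, and then sums $\sum_{j\geq 2} e^{-(t-1)\mu_j}|c_j|$ using Weyl's law to see that the series converges and is dominated by its first term for large $t$. You instead apply the smoothing \emph{once} to the whole remainder, writing $R(t,\cdot)=e^{\Delta_N}R(t-1,\cdot)$ and pairing the Cauchy--Schwarz bound $|R(t,x)|\leq h_N(2,x,x)^{1/2}\|R(t-1,\cdot)\|_{L^2}$ with the spectral-gap contraction $\|R(t-1,\cdot)\|_{L^2}\leq e^{-\mu_2(t-1)}\sqrt{|S|}$. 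This is cleaner: it avoids Weyl's law entirely, gives the stated decay rate for all $t\geq 1$ rather than only asymptotically, and makes the constant explicit as $e^{\mu_2}\sup_x h_N(2,x,x)^{1/2}\sqrt{|S|}$. Your closing caveat is also well placed and applies equally to the paper's proof: in both arguments the constant genuinely depends on the ultracontractivity constant of $\Omega$ (i.e., on its geometry, e.g.\ boundary regularity or bottleneck structure), not merely on $|S|$ and $|\Omega|$ as the statement of the lemma suggests; this is an imprecision in the statement rather than a defect of either argument.
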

\begin{proof}
    We follow the idea in Proposition 2.1 of \cite{BB99}. Start by writing
    \beq\label{ineq:S_exp}
    u(t, x) = |S|/|\Omega| + \sum_{j \geq 1}e^{-t\mu_j}\left(\int_{S}u_j(x)\;dx\right)u_j(x),
    \eeq
and observe that $e^{\Delta} : L^2 \to L^\infty$ is bounded. If the norm of the above operator is $\leq C$, then
$$
e^{-\mu_j} u_j = e^{\Delta}u_j \leq C,
$$
implying that 
$$
|u_j(x)| \leq e^{\mu_j}.
$$
Applying this to (\ref{ineq:S_exp}) and using Weyl's law, we get the result. 
\end{proof}

Now, consider a domain $\Omega$ with a ``bottleneck'' part $B$ such that $\Omega \setminus B$ has two components, which we call $\Omega_L$ and $\Omega_R$ (for ``left'' part and ``right'' part respectively). More formally, $\Omega$ can be written as the union 
$$
\overline{\Omega} = \overline{\Omega_R} \cup \overline{\Omega_L} \cup \overline{B},
$$
where the interiors of the domains $\Omega_R, \Omega_L$ and $B$ do not intersect, and $B$ is the finite disjoint union of a collection of domains each of which is contained in the $\varepsilon$-tubular neighbourhood of a smooth curve of curvature $\leq \kappa$, where $\kappa$ is a universal constant. Let $\Gamma$ be a compact hypersurface contained $B$ which cuts $\pa\Omega \cap \pa B$ transversally (exactly two points in dimension $n = 2$). Let $\omega(t)$ be a reflected Brownian particle starting at $x$. Then we have the following:
 
\begin{lemma}\label{lem:u_equidist}
    Let $x \in \Omega_L$ (without loss of generality). Then, 
    \beq\label{ineq:Burdzy_Wer_est}
    \left| u(t, x) - |\Omega_R|/|\Omega| \right| \geq \left(|\Omega_R|/|\Omega| \right)^{ct}
    \eeq
    for large enough time $t$.
\end{lemma}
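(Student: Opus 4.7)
The plan is to extract the leading asymptotic term from the spectral expansion of the Neumann heat semigroup applied to $\mathbbm{1}_{\Omega_R}$ and show that, because of the bottleneck, the second Neumann eigenvalue $\mu_2$ is small enough that the convergence $u(t,x) \to |\Omega_R|/|\Omega|$ is slow for $x$ in the ``wrong'' lobe $\Omega_L$. Concretely, proceeding as in the proof of Lemma \ref{lem:u_S_Omega} with $S=\Omega_R$, write
\[
u(t,x) - \frac{|\Omega_R|}{|\Omega|} \;=\; \sum_{j\geq 2} e^{-\mu_j t}\, c_j\, u_j(x), \qquad c_j := \int_{\Omega_R} u_j,
\]
so the aim is to isolate the $j=2$ term and give a pointwise lower bound on its absolute value.

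First I would control the tail $\sum_{j \geq 3} e^{-\mu_j t} c_j u_j(x)$, exactly as in Lemma \ref{lem:u_S_Omega}, via the $L^2 \to L^\infty$ boundedness of $e^{\Delta}$, Weyl's law, and the spectral gap $\mu_3 - \mu_2 > 0$ which is available because only $\mu_2$ is anomalously small thanks to the single bottleneck. That gives, for $t$ large enough,
\[
\left|u(t,x) - \frac{|\Omega_R|}{|\Omega|}\right| \;\geq\; \tfrac{1}{2}\, e^{-\mu_2 t}\, |c_2|\, |u_2(x)|.
\]
Next, I would invoke the bottleneck-geometric input (along the lines of \cite{BW99}): (i) the upper bound $\mu_2 \leq C_0$ obtained from the Rayleigh quotient tested against a two-level function constant on $\Omega_L$ and on $\Omega_R$ (with opposite signs adjusted so that $\int_\Omega v = 0$) interpolated linearly across $B$, whose gradient energy is controlled by the thinness of $B$; and (ii) pointwise lower bounds $|u_2(x)| \geq \alpha$ for $x \in \Omega_L$ away from the bottleneck mouth, and $|c_2| \geq \alpha'$. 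These lower bounds reflect the fact that $u_2$ is an $L^2$-small perturbation of the explicit step function $\sqrt{|\Omega_R|/(|\Omega_L||\Omega|)}\,\mathbbm{1}_{\Omega_L} - \sqrt{|\Omega_L|/(|\Omega_R||\Omega|)}\,\mathbbm{1}_{\Omega_R}$, with the $L^2$-error controlled by $\mu_2$.

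Combining,
\[
\left|u(t,x) - \frac{|\Omega_R|}{|\Omega|}\right| \;\geq\; \tfrac{1}{2}\,\alpha\,\alpha'\, e^{-\mu_2 t},
\]
and any choice of $c > \mu_2/\log(|\Omega|/|\Omega_R|)$ makes the right-hand side dominate $(|\Omega_R|/|\Omega|)^{ct}$ once $t$ is large enough to absorb the prefactor (which it can, thanks to the strict inequality in the choice of $c$). The main obstacle will be step (ii): quantitatively upgrading the $L^2$-closeness of $u_2$ to its step-function template into a pointwise lower bound on the lobes, away from the mouth of the bottleneck. I expect this will require extracting the $L^2$-perturbation bound from the Rayleigh quotient and then combining it with elliptic regularity (or a brief heat-kernel smoothing argument) on each lobe, which is essentially the content of the Burdzy--Werner construction in \cite{BW99}.
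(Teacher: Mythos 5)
Your proposal is correct in outline but takes a genuinely different route from the paper. The paper's proof is purely probabilistic: it writes $u(t,x)=\Prob(\omega(t)\in\Omega_R)=\Prob(\tau_\Gamma\le t,\ \omega(t)\in\Omega_R)$, where $\tau_\Gamma$ is the hitting time of the cross-section $\Gamma$ of the bottleneck, and uses the strong Markov property together with Lemma \ref{lem:u_S_Omega} to argue that after crossing $\Gamma$ the particle equidistributes, so the deviation from $|\Omega_R|/|\Omega|$ is essentially $\frac{|\Omega_R|}{|\Omega|}\Prob(\tau_\Gamma>t)$; the constant $c$ is then \emph{defined} by $\Prob(\tau_\Gamma\ge t)=\left(|\Omega_R|/|\Omega|\right)^{ct}$ and shown to shrink as $\Area(\Gamma)\to 0$. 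You instead isolate the $\mu_2$-mode of the spectral expansion and lower-bound $e^{-\mu_2 t}|c_2|\,|u_2(x)|$. Two remarks on the comparison. First, the heavy lifting in your route --- the pointwise lower bound on $|u_2|$ in $\Omega_L$, together with $c_2\neq 0$ and the gap $\mu_3>\mu_2$ --- is precisely what the hitting-time argument sidesteps; moreover your bound is only claimed away from the mouth of $B$, while the lemma is stated for every $x\in\Omega_L$ (admittedly the paper is equally cavalier about uniformity in $x$, since the prefactor of $\Prob(\tau_\Gamma>t)$ degenerates as $x$ approaches $\Gamma$). Second, and more consequential for how the lemma is used: your constant is $c\approx \mu_2/\log(|\Omega|/|\Omega_R|)$, so the smallness of $\mu_2$ enters as an \emph{input} through your Rayleigh-quotient step (i). But in Subsection \ref{subsec:sec_eig_est} the paper combines this lemma with Lemma \ref{lem:u_S_Omega} precisely in order to \emph{deduce} that $\mu_2$ is small; with your choice of $c$ that deduction degenerates to $\mu_2\le\mu_2$ (or merely returns the Rayleigh bound you assumed), whereas the paper's $c$ is a spectrum-free geometric quantity, which is what gives the combination its content. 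So your argument does prove the inequality as literally stated, but the probabilistic, spectrum-independent description of $c$ is really the point of the lemma, and is what your route loses.
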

All the ideas in the proof are implicit in \cite{BW99}.
\begin{proof}
    Consider the diffusion process above with $u_0(x) = \mathbbm{1}_{\Omega_R}$. Then, 
    \begin{align*}
        u(t, x) & = \Prob\left( \omega(t) \in \Omega_R \right) = \Prob\left( \tau_\Gamma \leq t, \omega (t) \in \Omega_R\right) \\
        & = \frac{|\Omega_R|}{|\Omega|} \Prob 
        (\tau_\Gamma \leq t), 
    \end{align*}
    where $\tau_\Gamma$ is the first hitting time of $\Gamma$ by $\omega(t)$ with the last equality holding to arbitrary accuracy for large enough $t$. This follows by using the Markov property and Lemma \ref{lem:u_S_Omega} above by seeing that 
    $$
    \Prob(\tau_\Gamma \leq t, \omega(t) \in \Omega_R) = \int_{\Omega_R}p_N(t, x, y)\; dy = e^{t\Delta}\mathbbm{1}_{\Omega_R},
    $$ 
    and the latter converges to $\frac{|\Omega_R|}{|\Omega|}$ as $t \to \infty$.

The RHS of the above is clearly equal to
$
 \frac{|\Omega_R|}{|\Omega|} - \left(\frac{|\Omega_R|}{|\Omega|}\right)^{ct}  
$
for large enough $t$, where $c$ is defined by 
$$
c := \frac{\log(\Prob(\tau_\Gamma \geq t))}{t\log\left( |\Omega_R|/|\Omega|\right)}.
$$
Now, narrower the bottleneck of $\Omega$, smaller is the area of $\Gamma$, which implies that for large $t$, $\log\Prob(\tau_\Gamma \geq t)$ is bounded from below implying that $c$ is smaller. Also, is is clear that $\Area(\Gamma) \to 0 \implies c \to 0$. This proves the claim.

\end{proof}

\begin{lemma}\label{lem:eigen_exp}
Let $\lambda_1$ denote the ground state eigenvalue of a domain $\Omega$ where $\pa\Omega = \Gamma_1 \bigsqcup \Gamma_2$ with Dirichlet boundary condition on $\Gamma_1$ and Neumann boundary condition on $\Gamma_2$. Then we have, 
\beq\label{eq:eigenvalue_prob_char}
    \lambda_1 = -\lim_{t \to \infty} \frac{1}{t}\; \ln \left( \int_\Omega \Exp_x \left(\mathbbm{1}_{\omega^l(t)}\right)\right),
\eeq
where $\omega^l(t)$ is a Brownian path which is killed upon impact on $\Gamma_1$ and reflected upon impact on $\Gamma_2$, 
$$
	\mathbbm{1}_{\omega^l(t)} = 
	\begin{cases}
	1 & \text{if } \omega^l ([0, t]) \text{ does not strike } \Gamma_1,\\
	0 & \text{otherwise, }
	\end{cases}
$$
and the expectation is with respect to the corresponding Wiener measure.
\end{lemma}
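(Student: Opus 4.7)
The plan is to mirror the derivation given earlier in the proof of the probabilistic characterization in (\ref{eq:exp_1_1}), applied now to the mixed Dirichlet--Neumann setting. Let $\Delta_M$ denote the Laplacian on $\Omega$ with Dirichlet condition on $\Gamma_1$ and Neumann condition on $\Gamma_2$, and let $\{\varphi_j\}_{j\geq 1}$ be an $L^2$-orthonormal eigenbasis with corresponding eigenvalues $0<\lambda_1<\lambda_2\leq \lambda_3\leq\cdots$. The first step is to invoke the Feynman--Kac formula for mixed boundary conditions (which holds for sufficiently regular $\partial\Omega$, see the convention stated before Proposition \ref{thm:Dirichlet_level_set}): for $f\in L^2(\Omega)$,
\begin{equation*}
    e^{t\Delta_M}f(x) = \Exp_x\!\left(f(\omega^l(t))\,\mathbbm{1}_{\omega^l(t)}\right),
\end{equation*}
where $\omega^l$ is the Brownian trajectory killed on $\Gamma_1$ and reflected on $\Gamma_2$. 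Taking $f\equiv 1$ and integrating over $\Omega$ yields
\begin{equation*}
    \int_\Omega \Exp_x\!\left(\mathbbm{1}_{\omega^l(t)}\right)\,dx = \bigl(e^{t\Delta_M}1,\,1\bigr)_{L^2(\Omega)}.
\end{equation*}

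Next I would spectrally expand the constant function in the basis $\{\varphi_j\}$, writing $1=\sum_j c_j\varphi_j$ with $c_j=\int_\Omega \varphi_j$, so that by Parseval,
\begin{equation*}
    \bigl(e^{t\Delta_M}1,\,1\bigr) = \sum_{j\geq 1} c_j^2\, e^{-t\lambda_j}.
\end{equation*}
Factoring out the leading exponential gives
\begin{equation*}
    e^{t\lambda_1}\bigl(e^{t\Delta_M}1,\,1\bigr) = c_1^2 + \sum_{j\geq 2} c_j^2\, e^{-t(\lambda_j-\lambda_1)} = c_1^2 + R(t),
\end{equation*}
with $R(t)\searrow 0$ as $t\nearrow\infty$. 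Taking logarithms, dividing by $-t$ and passing to the limit then yields (\ref{eq:eigenvalue_prob_char}), provided $c_1\neq 0$.

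The one genuine point requiring comment is therefore the non-vanishing of $c_1=\int_\Omega\varphi_1$. The hypothesis that $\Gamma_1$ is non-empty guarantees that $\lambda_1>0$ is a simple eigenvalue, and since $-\Delta_M$ with mixed Dirichlet--Neumann conditions still satisfies a maximum principle on the Dirichlet portion, the standard Perron--Frobenius / Jentzsch-type argument (applied to the positivity-preserving heat semigroup $e^{t\Delta_M}$) forces $\varphi_1$ to be of constant sign on $\Omega$. Choosing $\varphi_1>0$, one obtains $c_1>0$, and the argument above goes through. I expect this to be the only delicate step; the rest is a direct transcription of the calculation following (\ref{eq:exp_1_1}) with the killing/reflection rule adjusted. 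Everything else (Feynman--Kac, spectral decomposition, asymptotic dominance of the leading term) is routine given the material developed earlier in the paper.
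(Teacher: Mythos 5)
Your proposal is correct and follows essentially the same route as the paper: apply the mixed-boundary Feynman--Kac formula to $f\equiv 1$, expand $\bigl(e^{t\Delta}1,1\bigr)$ in the eigenbasis, isolate the leading term $c_1^2 e^{-t\lambda_1}$, and take $-\frac{1}{t}\ln(\cdot)$ as $t\to\infty$. Your explicit verification that $c_1=\int_\Omega\varphi_1\neq 0$ (via simplicity and positivity of the ground state) is a point the paper leaves implicit, and is a worthwhile addition rather than a divergence.
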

\begin{proof}
We start with the usual Feynman-Kac formula for the mixed boundary problem. 
We let $\Delta_l$ denote the Laplacian with the Dirichlet boundary condition on on $\Gamma_1$ and Neumann boundary condition on $\Gamma_2$. 
We have that
\beq 
e^{t\Delta_l}f(x) = \Exp_x\left( f\left(\omega^l(t)\right)\right),
\eeq
which gives us that
\begin{align}\label{eq:exp_1_1}
    \left( e^{t\Delta_l}1, 1\right) & = \int_\Omega \Exp_x \left(\mathbbm{1}_{\omega^l(t)}\right).
\end{align}
By expanding in the eigenbasis $\varphi_j, j \in \NN$, we see that 
\begin{align*}
    e^{t\lambda_1}e^{t\Delta_l}1 & = \varphi_1 \int_\Omega \varphi_1 + e^{t(\lambda_1 - \lambda_2)}\varphi_2 \int_\Omega \varphi_2 + \dots,
\end{align*}
which gives
\begin{align*}
    e^{t\lambda_1}\left( e^{t\Delta_l}1, 1\right) = \left(\int_\Omega \varphi_1\right)^2 + R(t),
\end{align*}
where $R(t) \searrow 0$ as $t \nearrow \infty$. 
Using (\ref{eq:exp_1_1}) above and letting $t \to \infty$, we recover (\ref{eq:eigenvalue_prob_char}).
\end{proof}

\subsection{Estimates on the size of the second eigenvalue}\label{subsec:sec_eig_est}

When we put together (\ref{ineq:Banuelos_Burdzy_est}) and (\ref{ineq:Burdzy_Wer_est}) in the special case $S = \Omega_R$, we see that for a domain with a narrow enough bottleneck, we have that the first non-trivial Neumann eigenvalue is arbitrarily small, namely:
$$
\mu_2 \leq c.
$$

Conversely, using Lemma \ref{lem:eigen_exp}, we have the following:
\begin{proposition}\label{prop:nodal_set_hit}
    Let $\Gamma$ be 
    the nodal set for the eigenfunction $u_1$. 
    Suppose for all $x \in \Omega$, we have that 
    $$
    \Prob\left( \tau_\Gamma \leq t_0\right) \geq p_0,
    $$
    where $t_0, p_0$ are two positive constants. Then, $\mu_2$ cannot be arbitrarily small.
\end{proposition}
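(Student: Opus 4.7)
The plan is to interpret $\mu_2$ via Lemma \ref{lem:eigen_exp} applied on one of the nodal domains of $u_1$, and then to convert the hypothesized uniform hitting bound into an exponential decay of survival probabilities by iterating the Markov property.

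First, I would split $\Omega$ along the nodal set as $\Omega = \Omega^+ \cup \Gamma \cup \Omega^-$, and observe that $u_1\big|_{\Omega^+}$ is the ground state eigenfunction of the mixed boundary problem on $\Omega^+$, with Dirichlet condition on $\Gamma_1 := \Gamma \cap \overline{\Omega^+}$ and Neumann condition on $\Gamma_2 := \partial\Omega \cap \partial\Omega^+$, and its eigenvalue is exactly $\mu_2$. Applying Lemma \ref{lem:eigen_exp} in this setting,
\beq\label{eq:mu2_char}
\mu_2 \;=\; -\lim_{t \to \infty} \frac{1}{t}\ln\!\left( \int_{\Omega^+} \Exp_x\!\left(\mathbbm{1}_{\omega^l(t)}\right) dx\right),
\eeq
where $\omega^l$ is the reflected-on-$\Gamma_2$, killed-on-$\Gamma_1$ Brownian motion, so $\Exp_x(\mathbbm{1}_{\omega^l(t)}) = \Prob_x(\tau_\Gamma > t)$.

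Next, I would upgrade the hypothesis $\Prob_x(\tau_\Gamma \leq t_0) \geq p_0$ (which yields $\Prob_x(\tau_\Gamma > t_0) \leq 1 - p_0$ uniformly in $x$) to exponential decay by a standard Markov-property iteration: conditioning on $\omega(kt_0)$ and using that the bound is uniform over starting points,
\[
\Prob_x(\tau_\Gamma > (k+1)t_0) \;=\; \Exp_x\!\left( \mathbbm{1}_{\{\tau_\Gamma > kt_0\}} \Prob_{\omega(kt_0)}(\tau_\Gamma > t_0)\right) \;\leq\; (1 - p_0)\, \Prob_x(\tau_\Gamma > kt_0),
\]
so by induction $\Prob_x(\tau_\Gamma > kt_0) \leq (1-p_0)^k$. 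For arbitrary $t \geq 0$, write $t = kt_0 + s$ with $s \in [0, t_0)$ to obtain $\Prob_x(\tau_\Gamma > t) \leq (1-p_0)^{t/t_0 - 1}$ uniformly in $x \in \Omega^+$.

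Plugging this bound into \eqref{eq:mu2_char} and using $|\Omega^+| \leq |\Omega|$,
\[
\int_{\Omega^+} \Prob_x(\tau_\Gamma > t) \, dx \;\leq\; |\Omega|\,(1-p_0)^{t/t_0 - 1},
\]
so
\[
-\frac{1}{t}\ln\!\left(\int_{\Omega^+} \Prob_x(\tau_\Gamma > t)\, dx \right) \;\geq\; -\frac{1}{t}\ln\!\left( \frac{|\Omega|}{1-p_0}\right) + \frac{1}{t_0}\ln\!\left( \frac{1}{1-p_0}\right).
\]
Letting $t \to \infty$ yields $\mu_2 \geq \frac{1}{t_0}\ln\frac{1}{1-p_0} > 0$, which is the claimed non-degenerate lower bound depending only on $(t_0, p_0)$.

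The one point to be careful about is the legitimacy of the Markov-property iteration for the reflected-killed process, which requires $\pa\Omega$ to be regular enough that the reflected Brownian motion is a well-defined strong Markov process (guaranteed under our Lipschitz boundary convention); beyond that, every step is a direct combination of Lemma \ref{lem:eigen_exp} and the standard tail bound for first hitting times, so I do not anticipate a serious obstacle.
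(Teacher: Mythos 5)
Your proposal is correct and follows essentially the same route as the paper: decompose $\Omega$ into the nodal domains of $u_1$ with mixed Dirichlet/Neumann conditions, iterate the Markov property to get $\Prob_x(\tau_\Gamma > kt_0) \leq (1-p_0)^k$, and conclude via the probabilistic eigenvalue characterization of Lemma \ref{lem:eigen_exp}. Your write-up is in fact more detailed than the paper's, supplying the explicit quantitative bound $\mu_2 \geq \frac{1}{t_0}\ln\frac{1}{1-p_0}$.
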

\begin{proof}
$\Gamma$ cuts out two nodal domains with mixed Dirichlet/Neumann boundary conditions. Then, 
$$
\Prob(\tau_\Gamma \leq t_0) \geq p_0 \implies \Prob(\tau_\Gamma \geq nt_0) \leq (1 - p_0)^n
$$
by application of the Markov property. Then an application of (\ref{eq:eigenvalue_prob_char}) gives us the claim. 
\end{proof}

Recall that in Theorem \ref{thm:Neumann_decay} we show via an argument involving the optional stopping theorem that absence of nodal set allows Neumann eigenfunctions to grow fast. Here, we show that in a domain with a narrow bottleneck, the first nodal line has to be near the bottleneck. This is in sharp contrast to the Dirichlet case as addressed in \cite{MS2021_2}. 

Suppose the nodal line is away from the bottleneck, whence its diameter needs to be above some constant, let's say $\varepsilon_0$. Then, we see that for all $x \in \Omega \setminus B$, $\Prob(\tau_\Gamma \leq t_0) \geq p_1$. Also, since any Brownian particle starting inside the bottleneck $B$ has high probability of escaping it, we see that $\Prob(\tau_{\Omega \setminus B} \leq t_0) \geq p_2$. By applying the Markovian property, for all $x\in \Omega$, we see that $\Prob(\tau_\Gamma \leq t_0) \geq p_0$. Now we bring in Proposition \ref{prop:nodal_set_hit} to finish the argument.

\subsection{Non-decay in bottle-neck domains: proof of Theorem \ref{thm:Neumann_decay}}   
Now, we take up a derivative question which on first glance might look somewhat unrelated: namely, that of tunneling of {\em low energy} Laplace eigenfunctions through narrow regions
in a domain. 
There is a substantial amount of recent literature on different 
variants and interpretations of the above question (see Section $7$ of \cite{GN2013} and references therein). We mention in particular the 
articles \cite{NGD2014}, 
\cite{vdBBo95} and \cite{BD92}. Inherent in all the above mentioned references seems to be the following heuristic (physical) fact: there is a sharp decay of a Dirichlet eigenfunction in a ``narrow tunnel-shaped'' (or horn-shaped) region when the wavelength of the eigenfunction is larger than the width of the tunnel. In other words, Dirichlet eigenfunctions cannot tunnel effectively through subwavelength openings. This leads to the localization of low frequency eigenfunctions in relatively thicker regions of a domain. Intuitively, this can be ascribed to the loss of energy through the walls surrounding the narrow tunnel. A version of the above has been proved in Theorem 1.6 of \cite{GM2021}. Now we consider again the case of an ``octopus'' domain, with a central large blob and narrow tentacles coming out of it, but this time with Neumann boundary conditions. We will assume that the domain has regular enough boundary for RBM to make sense, and prove that Neumann eigenfunctions do not need to decay inside the tentacles at all unless the nodal set enters the tentacle. Observe that intuitively, one does not really expect a decay as there is no loss of energy, as there is no heat conduction across the layers of the tunnel. A similar heuristic is at the heart of why one cannot expect domain monotonicity of Neumann eigenvalues.

\vspace{5mm}
 \begin{figure}[ht]
\centering
\includegraphics[scale=0.2]{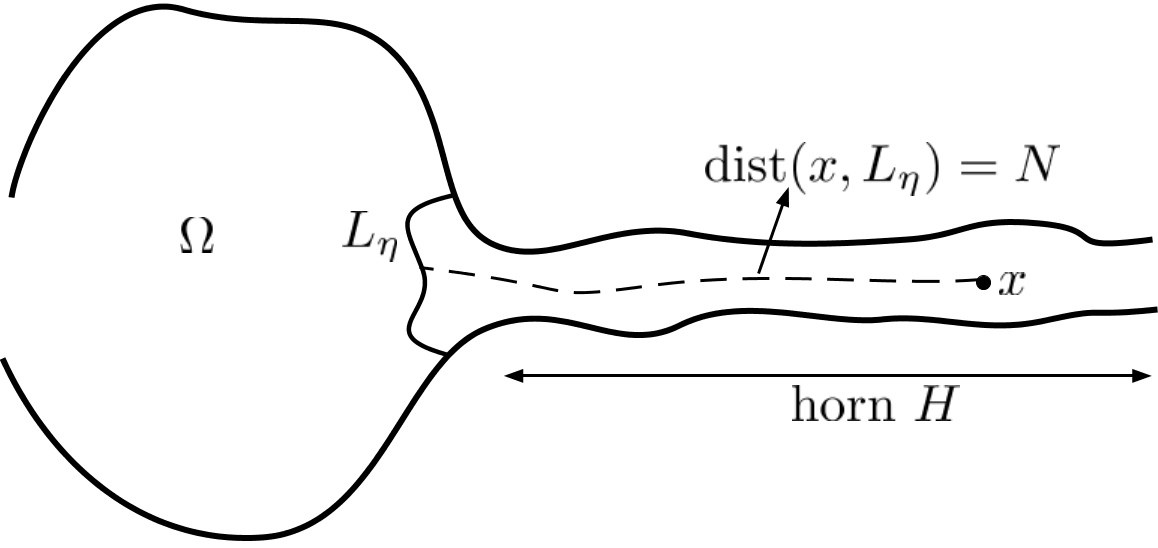} 
\caption{Domain $\Omega$ with a horn/tentacle $H$}
\label{fig:domain_horn}
\end{figure}
     
    \vspace{5mm}
Now we start the proof of Theorem \ref{thm:Neumann_decay}. 
The main idea is to use the RBM, and a version of Doob's optional stopping theorem, as in Theorem \ref{thm:opt_stop_level} above.
\begin{proof}
Recall that the solution to the backward heat equation $u(t, y) := e^{\mu t}\varphi(y)$ gives rise to a non-negative martingale $u(t, \omega(t))$ in the nodal domain containing $x$. The time $\tau$ when a Brownian particle emanating from $x$ hits $L_\eta$ (see Figure \ref{fig:domain_horn}) is a stopping time, and so is $t^* := \min\{t, \tau\}$, where $t > 0$ is some real number. Then by Doob's optional stopping theorem, we have that 
\begin{align*}
\varphi(x) & = \Exp_x\left( e^{\mu.0}\varphi(x)\right) = \Exp\left( e^{\mu t^*}\varphi(\omega(t^*))\right)\\
& = \Exp\left( e^{\mu \tau}\varphi(\omega(\tau))\chi_{\tau \leq t}\right) + \Exp\left( e^{\mu t}\varphi(\omega(t))\chi_{\tau \geq t}\right)\\
& \geq \Exp\left( e^{\mu\tau}\varphi(\omega(\tau))\chi_{\tau \leq t}\right).
\end{align*} 
Letting $t \nearrow \infty$, it is clear from Markov's inequality that
\begin{align*}
    \varphi(x) & \geq \eta\; \Exp\left( e^{\mu \tau}\right) \geq \eta\;e^a\Prob\left( e^{\mu \tau} \geq e^a\right) \text{ for all real numbers } a \\
    & = \eta\;e^a\Prob\left( \tau \geq a/\mu\right) = \eta\;
    e^{\mu N^2}
    \Prob \left( \tau \geq 
    N^2\right).
\end{align*}
Since a typical Brownian particle covers a distance $\sim \sqrt{t}$ within time $t$, by an argument similar to Theorem \ref{thm:Dirichlet_level_set} our claim now follows.
\end{proof}

\begin{remark}
    Observe that the above proof is quite robust in the sense that we have not used the fact that the horns are ``narrow'' in any essential way. This is again, in contrast with Theorem 1.6 of \cite{GM2021}, where narrowness of the tunnel is of critical importance to force decay. So it is quite conceivable that the argument in the above proof could be useful in more diverse situations. 
\end{remark}

\subsection{Hot spot constant: proof of Theorem \ref{thm:hot_spot_constant_high_mu}} Start a RBM from a point $x \in \Omega$, and look at the particle at time $t$. There are two options: the particle strikes $\pa\Omega$ within time $t$, or not. Let $\tau_\Omega$ denote the first ``reflecting time'', that is, the first time that the Brownian particle strikes the boundary $\pa\Omega$. Then one has
\begin{align*}
    e^{-\mu_2 t}u(x) & = \Exp_x \left( u(\omega(t) \mathbbm{1}_{\tau_\Omega > t}\right) + \Exp_x \left( u(\omega(t) \mathbbm{1}_{\tau_\Omega \leq t}\right)\\
    & \leq \| u\|_{L^\infty(\Omega)} q_t(x) + 
    \Exp_x \left( u(\omega(t) \mathbbm{1}_{\tau_\Omega \leq t}\right). 
\end{align*}
 
By the strong Markov property, we have that 
\begin{align*}
    \Exp_x \left( u(\omega(t) \mathbbm{1}_{\tau_\Omega \leq t})\right) & = \Exp_x \left( \Exp_{\omega(\tau_\Omega)} u(\omega(t - \tau_\Omega)) \mathbbm{1}_{\tau_\Omega \leq t}\right)\\
     & = \Exp_x\left( e^{-\mu_2(t - \tau_\Omega)} u(\omega(\tau_\Omega))\mathbbm{1}_{\tau_\Omega \leq t} \right) \\
     & \leq \| u\|_{L^\infty(\pa\Omega)}e^{-\mu_2t}\Exp_x \left( e^{\mu_2\tau_\Omega} \mathbbm{1}_{\tau_\Omega \leq t}\right).
\end{align*}

So, if $x$ is an interior hot spot for $u$, that is, $u(x) = \| u \|_{L^\infty(\Omega)}$, we have that 
$$
e^{-\mu_2t}\| u \|_{L^\infty(\Omega)} \leq \| u \|_{L^\infty(\Omega)} q_t(x) + \| u \|_{L^\infty(\pa\Omega)}e^{-\mu_2t} \Exp_x\left( e^{\mu_2\tau_\Omega}\mathbbm{1}_{\tau_\Omega \leq t} \right),
$$
which implies that 
$$
e^{-\mu_2t} \leq q_t(x) + \frac{\|u \|_{L^\infty(\pa\Omega)}}{\|u \|_{L^\infty(\Omega)}}e^{-\mu_2t}\Exp_x \left( e^{\mu_2\tau_\Omega}\mathbbm{1}_{\tau_\Omega \leq t} \right),
$$
or
\beq
\frac{\|u \|_{L^\infty(\pa\Omega)}}{\|u \|_{L^\infty(\Omega)}} \geq \frac{ e^{-\mu_2t} - q_t(x) }{ e^{-\mu_2t} \Exp_x \left( e^{\mu_2\tau_\Omega} \mathbbm{1}_{\tau_\Omega \leq t}\right) } = \frac{ 1 - e^{\mu_2t} q_t(x) }{ \Exp_x\left( e^{\mu_2\tau_\Omega}\mathbbm{1}_{\tau_\Omega \leq t}\right)}.
\eeq


Calculating further, we see that, 
\begin{align*}
    \frac{\|u \|_{L^\infty(\pa\Omega)}}{\|u \|_{L^\infty(\Omega)}} & \geq \frac{ e^{-\mu_2t} - q_t(x) }{ \Exp_x \left( e^{\mu(\tau_\Omega - t)} \mathbbm{1}_{\tau_\Omega \leq t}\right) } \\
    & \geq \frac{
    e^{-\mu_2t} - c_2\zeta_2(\varepsilon)e^{-(1 - \varepsilon)\lambda_1t}}{ \Exp_x \left( e^{\mu_2(\tau_\Omega - t)} \mathbbm{1}_{\tau_\Omega \leq t}\right)} \quad \text{  (using 
    (\ref{ineq:heat_BMW}))}
\end{align*}
where $\zeta_n(s) := e^{n/4}\frac{\sqrt{2}}{(8n)^{n/4}} \sqrt{ \frac{\Gamma(n)}{\Gamma(n/2)}  } \left( 1 +\frac{1}{\sqrt{s}}\right)^{n/2}$.


Following a calculation in \cite{MPW}, we see that for $t \geq 0$ and $ x \in \overline{\Omega}$, we have almost surely
\begin{align*}
    \int_0^t \mu_2e^{\mu_2s} \mathbbm{1}_{\tau_\Omega > s} \; ds & = \int_0^{\tau_\Omega \wedge t} \mu_2e^{\mu_2s} \; ds = e^{\mu_2(\tau_\Omega \wedge t)} - 1\\
    & = e^{\mu_2\tau_\Omega} \mathbbm{1}_{\tau_\Omega \leq t} + e^{\mu_2t}\mathbbm{1}_{\tau_\Omega > t} - 1.
\end{align*}

Taking expectations and rearranging one gets the following:
\beq\label{eq:MPW_21}
\Exp\left( e^{\mu_2\tau_\Omega} \mathbbm{1}_{\tau_\Omega \leq t} \right)  = 1 - e^{\mu_2t}q_t(x) + \int_0^t \mu_2e^{\mu_2s}q_s(x) \; ds.
\eeq

On calculation, one sees that 
\begin{align*}
    e^{-\mu_2t}\Exp_x\left( e^{\mu_2\tau_\Omega} \mathbbm{1}_{\tau_\Omega \leq t}\right) & = 
    e^{-\mu_2t} - q_t(x) + 
     \zeta_2(\varepsilon) \mu_2e^{-\mu_2t} \frac{ e^{s(\mu_2- (1 - \varepsilon)\lambda_1)}}{\mu_2- (1 - \varepsilon)\lambda_1}\bigg|_0^t\\
    & = 
    e^{-\mu_2t} - q_t(x) + \frac{  \zeta_2(\varepsilon) \mu_2e^{-\mu_2t} }{ \mu_2- (1 - \varepsilon)\lambda_1 } \left( e^{t(\mu_2- (1 - \varepsilon)\lambda_1)} - 1\right).
\end{align*}

Putting things together, we see that 
\begin{align*}
    \frac{\| u \|_{L^\infty (\pa\Omega)}}{\| u \|_{L^\infty(\Omega)}} & \geq \frac{ e^{-
    \mu_2t} -  \zeta_2(\varepsilon) e^{ -(1 - \varepsilon) \lambda_1 t} }{ 
    e^{-\mu_2t} - q_t(x) + \frac{ \zeta_2(\varepsilon) \mu_2e^{-\mu_2t}}{\mu_2- (1 - \varepsilon)\lambda_1} \left( e^{t(\mu_2- (1 - \varepsilon)\lambda_1)} - 1\right)}.
\end{align*}
In the regime $ t = c\mu_2^{-1}$,  letting $\sigma := \lambda_1/\mu_2$, and writing $a:= (1 - (1 - \varepsilon)\sigma)$ we see that 
\beq\label{ineq:hot_spot_opti}
     \frac{\| u \|_{L^\infty ( \Omega)}}{\| u \|_{L^\infty(\pa \Omega)}}    \leq  1 + \frac{ \zeta_2(\varepsilon) \left(e^{ca}  - 1\right)}{ a\left( 1 - \zeta_2(\varepsilon)e^{ca}\right)},
\eeq
where $\varepsilon\in (0, 1-\frac{1}{\sigma})$. The expression on the right decreases as $c \nearrow \infty$. Then (\ref{ineq:hot_spot_opti}) reduces to 
\beq\label{ineq:hot_spot_c_large}
\frac{\| u \|_{L^\infty ( \Omega)}}{\| u \|_{L^\infty(\pa \Omega)}}    \leq  1 + \frac{\zeta_2(\varepsilon)}{\sigma(1- \varepsilon) - 1}.
\eeq
In particular, for domains with high  values of $\lambda/\mu$, the hot spots constant is arbitrarily close to $1$. 

\subsection{Acknowledgements} The first author would like to thank Harsha Hutridurga for teaching him the material in Subsubsection \ref{subsec:Neumann_Poin}. 
The first author's research was partially supported by SEED Grant RD/0519-IRCCSH0-024. The second named author would like to thank the Council of Scientific and Industrial Research, India for funding which supported his research. The authors would like to deeply thank Stefan Steinerberger, Sugata Mondal, Siva Athreya and  Vivek Borkar for illuminating conversations. Finally, the authors wish to thank Indian Institute of Technology Bombay for providing ideal working conditions.

\bibliographystyle{alpha}
\bibliography{references}

\end{document}